\begin{document}
\title[Fractional Schr\"{o}dinger Equations]
{Multiple Semiclassical Standing Waves for Fractional Nonlinear Schr\"{o}dinger Equations}
\author{Guoyuan Chen}
\address{School of Mathematics and Statistics, Zhejiang University of Finance \& Economics, Hangzhou 310018, Zhejiang, P. R. China}
\email{gychen@zufe.edu.cn}

\newcommand{\optional}[1]{\relax}
\setcounter{secnumdepth}{3}
\setcounter{section}{0} \setcounter{equation}{0}
\numberwithin{equation}{section}

\keywords{fractional Laplacian, Schr\"odinger equation, semiclassical solutions, concentration phenomena, Lyapunov-Schmidt reduction}
\date{\today}
\begin{abstract}
Via a Lyapunov-Schmidt reduction, we obtain multiple semiclassical solutions to a class of fractional nonlinear Schr\"odinger equations. Precisely, we consider
\begin{equation*}
\varepsilon^{2s}(-\Delta)^{s}u+u+V(x)u=|u|^{p-1}u,\quad  u\in H^s(\mathbf R^n),
\end{equation*}
where $0<s<1$, $n>4-4s$, $1<p<\frac{n+2s}{n-2s}$ (if $n>2s$) and $1<p<\infty$ (if $n\le 2s$), $V(x)$ is a non-negative potential function. If $V$ is a sufficiently smooth bounded function with a non-degenerate compact critical manifold $M$, then, when $\varepsilon$ is sufficiently small, there exist at least $l(M)$ semiclassical solutions, where $l(M)$ is the cup length of $M$.
\end{abstract}
\maketitle

\section{Introduction}

Fractional Schr\"odinger equations are derived from the path integral over L\'{e}vy trajectories. It can be applied, for example, to describe the orbits radius for hydrogen-like atoms. (For more details of physical background, see, for example, \cite{La:FSE} and the references therein.)

We study the fractional nonlinear Schr\"odinger equation of form
\begin{equation}\label{e:evolutionEqn}
i\varepsilon \frac{\partial\psi}{\partial t}=(-\varepsilon^2\Delta)^s\psi+V(x)\psi-|\psi|^{p-1}\psi \quad\mbox{in }\mathbf R^n,
\end{equation}
where $\varepsilon$ is a small positive constant which is corresponding to the Planck constant, $(-\Delta)^s$, $0<s<1$, is the fractional Laplacian, $V(x)$ is a potential function, and $p>1$.

We shall look for the so-called standing wave solutions which are of form
\begin{equation*}\label{e:standingwave}
\psi(x,t)=e^{(i/\varepsilon)Et}v(x),
\end{equation*}
where $v$ is a real-valued function depending only on $x$ and $E$ is some constant in $\mathbf R$. The function $\psi$ solves (\ref{e:evolutionEqn}) provided the standing wave $v(x)$ satisfies
\begin{equation}\label{e:stationally}
(-\varepsilon^2\Delta)^sv+(V(x)+E)v-|v|^{p-1}v=0\quad\mbox{in } \mathbf R^n.
\end{equation}

In what follows, we assume that $E=1$ and $p$ is subcritical. That is, we will study the following equation:
\begin{equation}\label{e:main-equation}
\varepsilon^{2s}(-\Delta)^{s}u+u+V(x)u=|u|^{p-1}u,\quad  u\in H^s(\mathbf R^n),
\end{equation}
where $0<s<1$, and $1<p<\frac{n+2s}{n-2s}$ for $n>2s$, and, $1<p<\infty$ for $n\le2s$.

In quantum mechanics, when $\varepsilon$ tends to zero, the existence and multiplicity of solutions to (\ref{e:main-equation}) is of importance. We will find multiple solutions $u_{\varepsilon}$ of (\ref{e:main-equation}) that concentrate near some point $x_0\in \mathbf R^n$ as $\varepsilon\to 0$. By this we mean that, for all $x\in \mathbf R^n\setminus \{x_0\}$, $u_{\varepsilon}(x)\to 0$ as $\varepsilon\to 0$. Such kind of solutions are so-called semiclassical standing waves or spike pattern solutions.

When $s=1$, Equation (\ref{e:main-equation}) is a classical nonlinear Schr\"odinger equation and the existence of semiclassical standing wave solutions was established by Floer and Weinstein \cite{FW:NWPCS}, and then Oh \cite{Oh:CMP89, Oh:CMP90}. There is a large mount of research on this subject in the past two decades. We refer for example to the (far from complete) list of papers \cite{ABC:ARMA97, AMS:MRNSE, AMN:SPEE, BL:MPEPS, BL:EPSSE, DF:LMP, BW:SWCF, G:NSPS, Li:SPEE, DW:CCNSE, CL:TMNA97, DF:MA02, G:CPDE96, KW:ADE00, R:ZAMP92, W:CMP93, MMM:SSE} and the references therein.

When $s\in (0,1)$, the existence of semiclassical solution to Equations (\ref{e:main-equation}) was obtained by D\'{a}vila, del Pino and Wei \cite{DDW:CSWFSE}, Chen and Zheng \cite{CZ:CPFSE}. Precisely, by a Lyapunov-Schimdt reduction, \cite{DDW:CSWFSE} proved that if $V$ is a sufficiently smooth positive function with non-degenerate critical points $\xi_1,\xi_2,\cdots,\xi_k$ and satisfies some degree conditions around these points, then there exists a solution of (\ref{e:main-equation}) concentrating to these $k$ critical points. (See \cite{CZ:CPFSE} for the case $k=1$ with more technical conditions.) Further, in \cite{FaMaVa14:GSCP}, Fall, Mahmoudi and Valdinoci proved that if there exist semiclassical solutions to (\ref{e:main-equation}) as $\varepsilon\to 0$, then the concentration points must be critical points of $V$.

Moreover, we should mention that the concentration phenomena for fractional Schr\"odinger equations on bounded domain with Dirichlet condition were investigated by D\'{a}vila, del Pino, Dipierro and Valdinoci \cite{DaDPDiVa14}.

In this paper, we mainly investigate existence and multiplicity of semiclassical standing wave solutions to Equation (\ref{e:main-equation}) when $V$ has non-isolated critical points. More precisely, we have the following theorem.
\begin{theorem}\label{t:main}
Let $0<s<1$, $n> 4-4s$. Suppose that $V$ is a non-negative function in $C^3_b(\mathbf R^n)$ with a non-degenerate smooth compact critical manifold $M$. Then for $\varepsilon>0$ small, Equation (\ref{e:main-equation}) has at least $l(M)$ solutions concentrating near points of $M$.
\end{theorem}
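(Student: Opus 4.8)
\medskip
\noindent\emph{Strategy.} The plan is to run a finite-dimensional (Lyapunov--Schmidt) reduction adapted to the nonlocal operator and then to count the critical points of the resulting reduced functional by the cup-length of $M$, in the spirit of the classical $s=1$ scheme (see e.g.\ \cite{AMS:MRNSE}) and of the fractional isolated-points case \cite{DDW:CSWFSE}. First I would scale, $u(x)=v\big((x-\xi)/\varepsilon\big)$, which turns \eqref{e:main-equation} into
\begin{equation*}
(-\Delta)^sv+v+V(\xi+\varepsilon y)\,v=|v|^{p-1}v,\qquad v\in H^s(\mathbf R^n),
\end{equation*}
with energy $I_{\varepsilon,\xi}(v)=\tfrac12\int\!\big(|(-\Delta)^{s/2}v|^2+(1+V(\xi+\varepsilon y))v^2\big)-\tfrac1{p+1}\int|v|^{p+1}$ and $J_\varepsilon(u)=\varepsilon^n I_{\varepsilon,\xi}(v)$. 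As $\varepsilon\to0$ the problem frozen at $\xi$ is $(-\Delta)^sw+(1+V(\xi))w=|w|^{p-1}w$, whose unique positive radial solution is the rescaled ground state $W_\xi(y)=(1+V(\xi))^{1/(p-1)}\,U\big((1+V(\xi))^{1/(2s)}y\big)$, where $U$ solves $(-\Delta)^sU+U=U^p$. The two facts I would rely on are the \emph{non-degeneracy} $\ker\big((-\Delta)^s+1-pU^{p-1}\big)=\operatorname{span}\{\partial_{y_1}U,\dots,\partial_{y_n}U\}$ in $H^s(\mathbf R^n)$ (Frank--Lenzmann--Silvestre, as used in \cite{DDW:CSWFSE}) and the polynomial decay $0<U(y)\le C(1+|y|)^{-(n+2s)}$, $|\nabla U(y)|\le C(1+|y|)^{-(n+2s)}$.

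Then comes the reduction. I would set $Z^\xi_i=\partial_{y_i}W_\xi$, $K_\xi=\operatorname{span}\{Z^\xi_1,\dots,Z^\xi_n\}$, and seek $v=W_\xi+\phi$ with $\phi\perp K_\xi$ in $H^s$. Writing $L_{\varepsilon,\xi}$ for the linearization of the rescaled equation at $W_\xi$ and $Q_\xi$ for the $H^s$-orthogonal projection onto $K_\xi^{\perp}$, the non-degeneracy of $U$ gives that $Q_\xi L_{\varepsilon,\xi}Q_\xi:K_\xi^{\perp}\to K_\xi^{\perp}$ is invertible, with the norm of the inverse bounded uniformly for $\xi$ in a fixed tubular neighbourhood $\mathcal N$ of $M$ and for $\varepsilon$ small. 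A contraction-mapping argument (or the implicit function theorem) then produces, for each such $\xi$, a unique small $\phi=\phi_\varepsilon(\xi)\in K_\xi^{\perp}$ solving the auxiliary equation $Q_\xi\big[(-\Delta)^sv+v+V(\xi+\varepsilon\cdot)v-|v|^{p-1}v\big]=0$; the map $\xi\mapsto\phi_\varepsilon(\xi)$ is of class $C^1$, with $\|\phi_\varepsilon(\xi)\|_{H^s}=O(\varepsilon)$ (improving to $O(\varepsilon^2)$ on $M$, where $\nabla V$ vanishes) and comparable bounds for $\partial_\xi\phi_\varepsilon$. This is where the nonlocal character bites: since $U$ decays only polynomially, the error estimates that are routine for $s=1$ (exponential decay) must be redone with polynomially weighted norms, and the Taylor expansion $V(\xi+\varepsilon y)=V(\xi)+\varepsilon\nabla V(\xi)\cdot y+\tfrac{\varepsilon^2}2 y^{\top}D^2V(\xi)y+\tfrac{\varepsilon^3}6 D^3V(\xi)[y,y,y]+o(\varepsilon^3|y|^3)$ brings in the moments $\int|y|^kW_\xi^2\,dy$; the hypothesis $n>4-4s$ is precisely the condition $\int|y|^4W_\xi^2\,dy<\infty$ (hence also for all $k\le4$), which---together with $V\in C^3_b$---is what one needs for a $C^1$ expansion of the reduced energy.

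Now I would study the reduced functional $\Phi_\varepsilon(\xi):=I_{\varepsilon,\xi}\big(W_\xi+\phi_\varepsilon(\xi)\big)$ on $\mathcal N$. As usual in this method, $\xi$ is a critical point of $\Phi_\varepsilon$ if and only if $u_\varepsilon(x):=\big(W_\xi+\phi_\varepsilon(\xi)\big)\big((x-\xi)/\varepsilon\big)$ solves \eqref{e:main-equation}. Using the oddness identity $\int y\,W_\xi^2\,dy=0$ (which kills the $O(\varepsilon)$ term), the quadratic smallness of the $\phi$-contribution to the energy, and the moment bounds above, one obtains
\begin{equation*}
\Phi_\varepsilon(\xi)=c_*\,\big(1+V(\xi)\big)^{\theta}+o(1)\quad\text{in }C^1(\mathcal N),\qquad \theta=\frac{2}{p-1}+1-\frac{n}{2s}>0,
\end{equation*}
with $c_*>0$ the ground-state energy of $U$ (subcriticality of $p$ yields $\theta>0$, though only $\theta\ne0$ is needed). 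Since $t\mapsto(1+t)^{\theta}$ is strictly monotone, $\xi\mapsto c_*(1+V(\xi))^{\theta}$ has $M$ as a non-degenerate critical manifold; hence, by the standard Lusternik--Schnirelmann perturbation result for functionals that are $C^1$-close, near $M$, to a function possessing a non-degenerate critical manifold (cf.\ the abstract statements used in \cite{AMS:MRNSE}), $\Phi_\varepsilon$ has at least $l(M)$ critical points in $\mathcal N$ for $\varepsilon$ small, $l(M)$ being the cup-length of $M$. Each such critical point $\xi_\varepsilon$ yields through the reduction a solution $u_\varepsilon$ of \eqref{e:main-equation}, and since $W_{\xi_\varepsilon}$ concentrates at the origin while $\xi_\varepsilon\to M$, these $u_\varepsilon$ concentrate near points of $M$; this proves the theorem.

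The hard part, as indicated, is the nonlocal bookkeeping in the reduction. Re-proving the $s=1$ error estimates with polynomially weighted norms---where the exact threshold $n>4-4s$ emerges---is the technical core, and, above all, upgrading the reduced-energy expansion from $C^0$ to $C^1$ demands delicate control of $\partial_\xi\phi_\varepsilon$ and of how the finite-dimensional data $K_\xi$, $Q_\xi$ depend on $\xi$ along $M$. By contrast, the uniform invertibility in the auxiliary step, although technical, reduces to the known non-degeneracy of $U$, and the final count of critical points is an off-the-shelf topological result.
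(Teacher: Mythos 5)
Your proposal is correct and follows essentially the same path as the paper: the same change of variables, the same manifold of approximate solutions $z_\xi = b(\varepsilon\xi)U(a(\varepsilon\xi)(x-\xi))$, the same Lyapunov--Schmidt reduction yielding a reduced functional that is $C^1$-close to $c_*(1+V(\xi))^{\theta}$ with $\theta=\frac{p+1}{p-1}-\frac{n}{2s}$, the same identification of $n>4-4s$ with integrability of $\int|y|^4U^2\,dy$, and the same appeal to Chang's cup-length perturbation theorem. The only place you are noticeably terser than the paper is the uniform invertibility of the projected linearization: the paper devotes Section~4 to a compactness/contradiction argument precisely because the classical local techniques used for $s=1$ are unavailable, so this step is less ``off-the-shelf'' than your sketch suggests, though it does ultimately rest on the non-degeneracy of $U$.
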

Here $l(M)$ denotes the cup length of $M$ (see Section \ref{ss:abstract} below) and $$C^3_b(\mathbf R^n)=\{v\in C^3(\mathbf R^n) \mid \partial^{J}v \mbox{ is bounded on }\mathbf R^n \mbox{ for all } |J|\le 3\}.$$
The non-degeneracy of a critical manifold is in the sense of Bott \cite{Bo:AM57}. Precisely, we say that a critical manifold $M$ of $V$ is non-degenerate if, for every $x\in M$, the kernel of $D^2f(x)$ equals to $T_x M$.

\begin{remark}
When $s=1$, the result of this theorem was obtained by Ambrosetti, Malchiodi and Secchi \cite{AMS:MRNSE}.
\end{remark}

\begin{remark}
Since the unique positive solution (up to translation) to the standard equation decays as $1/(1+|x|^{n+2s})$ (see for example \cite{FS:URSFL, FQT:PSNFS} or Theorem \ref{t:unda} below), we should technically assume that $0<s<1$ and $n> 4-4s$ to make some necessary integrals convergent (see the proof of Lemma \ref{l:vxz} below). Based on our observation, this assumption is essential since the decay estimate of the unique standard solution is optimal. We should also note that when $s\to 1$, there is no restriction on the dimension $n$. This is the same as the classical case $s=1$.
\end{remark}

\begin{remark}
Note that the assumption $V\ge 0$ on $\mathbf R^n$ is not essential. In fact, a similar argument as in Section \ref{sb:cuf} below implies that the condition  $\inf (1+V)>0$ is sufficient. Without loss of generality, in what follows we assume that $V(0)=0$ for simplicity.
\end{remark}

Our proof relies on a singular perturbation argument as in \cite{AMS:MRNSE}. More precisely, by the change of variable $x\to \varepsilon x$, Equation (\ref{e:main-equation}) becomes
\begin{equation}\label{e:change}
(-\Delta)^{s}u+u+V(\varepsilon x)u=|u|^{p-1}u.
\end{equation}
Solutions of (\ref{e:change}) are the critical points $u\in H^s(\mathbf R^n)$ of the functional
\begin{equation}
f_{\varepsilon}(u)=f_0(u)+\frac{1}{2}\int_{\mathbf R^n}V(\varepsilon x)u^2dx,
\end{equation}
where
\begin{equation}\label{e:f0u}
f_0(u)=\frac{1}{2}\|u\|_s^2-\frac{1}{p+1}\int_{\mathbf R^n}|u|^{p+1}dx.
\end{equation}
Here $\|\cdot\|_s$ denotes the norm in $H^s(\mathbf R^n)$.
We should note that $f_{\varepsilon }\in C^2(H^s(\mathbf R^n))$.
We will find the solutions of (\ref{e:change}) near the solutions of
\begin{equation}\label{e:ss}
(-\Delta)^{s}u+u+V(\varepsilon \xi)u=|u|^{p-1}u,
\end{equation}
for some $\xi\in \mathbf R^n$ to be fixed. The solutions of (\ref{e:ss}) are critical points of the following functional
\begin{equation}\label{f:xi}
F_{\varepsilon, \xi}(u)=f_0(u)+\frac{1}{2}V(\varepsilon \xi)\int_{\mathbf R^n}u^2dx.
\end{equation}
Since (\ref{f:xi}) has a term of $V$, $F_{\varepsilon, \xi}$ inherits the topological features of the critical manifold $M$ of $V$. Therefore, if we consider $f_{\varepsilon}$ as a perturbation of $F_{\varepsilon,\xi}$, multiple solutions to (\ref{e:change}) will be found by a multiplicity theorem from \cite{Ch:IDMT} (see Theorem \ref{t:abstract} below).

Nevertheless, a direct application of the arguments in \cite{AMS:MRNSE} to our problem is impossible. There are two reasons which make our proof much more complicated. Firstly, unlike the Laplacian $-\Delta$, the fractional Laplacian $(-\Delta)^s$, $0<s<1$, is nonlocal. For this reason, when $0<s<1$, the classical local techniques as in $s=1$ case (see \cite{AMS:MRNSE}) can not be used any more. For instance, instead of using the classical method in \cite{AMS:MRNSE} which depends on the locality of $-\Delta$ essentially, we employ a functional analysis approach to prove the invertibility of $D^2f_{\varepsilon}$ (see Section \ref{s:inertibility} below). Secondly, the standard solution $U$ to unperturbed fractional Schr\"odinger equation ($V\equiv 0$ in Equation (\ref{e:main-equation})) decays only as $1/(1+|x|^{n+2s})$ (see Section \ref{ss:se} below), especially it does not decay exponentially as in $s=1$ case. Therefore, to ensure the necessary functions in certain Sobolev spaces on $\mathbf R^n$ and to recover the estimates for Lyapunov-Schmidt reduction, we need more detailed and involved analysis than the classical case (see Section \ref{s:estimates}, \ref{s:inertibility} and \ref{s:reduction} below).

Our paper is organized as follows. In Section \ref{s:preliminaries}, we recall the notations of Fractional Sobolev spaces, some basic properties of standard equation which is obtained by \cite{FL:UNGS, FS:URSFL, FQT:PSNFS}. Moreover, we formulate the functional corresponding Equation (\ref{e:main-equation}), and construct the critical manifold of the functional (\ref{f:xi}). In Section \ref{s:estimates}, some useful estimates are showed for further reference. In Section \ref{s:inertibility}, we prove the invertibility of linearized operator at the points on critical manifold of $F_{\varepsilon,\xi}$. In Section \ref{s:reduction}, we apply the Lyapunov-Schmidt reduction method to our functional. In Section \ref{s:proof}, we complete the proof of Theorem \ref{t:main}.


\section{Preliminaries}\label{s:preliminaries}

In this section, we recall some results on fractional Laplacian, fractional Sobolev spaces and some uniqueness, non-degeneracy and decay results for solutions to the standard Schr\"odinger equations.

\subsection{Fractional Laplacian and fractional order Sobolev spaces}
For further references, we recall some basic facts involving fractional Laplacian and fractional order Sobolev spaces. For more details, see, for example, \cite{Ad:SS}, \cite{Sh:POST}, \cite{NPV:HG}, \cite{Caffarelli&Silvestre07}.

Mathematically, $(-\Delta)^s$ is defined as
$$
(-\Delta)^s u = C(n, s)\mbox{P.V.} \int_{\mathbf{R}^n}\frac{u(x) - u(y)}{|x - y|^{n + 2s}}dy = C(n, s)\lim_{\delta\to 0^+}\int_{\mathbf R^n\setminus B_{\delta}(x)}\frac{u(x) - u(y)}{|x - y|^{n + 2s}}dy.
$$
Here P. V. is a commonly used abbreviation for `in the principal value sense' and $C(n, s) = \pi^{-(2s + n/2)}\frac{\Gamma(n/2 + s)}{\Gamma(-s)}$.
It is well known that $(-\Delta)^s$ on $\mathbf{R}^{n}$ with $s\in (0, 1)$ is a nonlocal operator.

When $s\in (0, 1)$, the space $H^{s}(\mathbf{R}^{n}) = W^{s, 2}(\mathbf{R}^n)$ is defined by
\begin{eqnarray*}
H^{s}(\mathbf{R}^{n})& = &\left\{u\in L^2(\mathbf{R}^2): \frac{|u(x) - u(y)|}{|x - y|^{\frac{n}{2} + s}}\in L^{2}(\mathbf{R}^n\times\mathbf{R}^n)\right\}\\
& = & \left\{u\in L^2(\mathbf{R}^2): \int_{\mathbf{R}^n}(1 + |\zeta|^{2s})|\mathcal{F}u(\zeta)|^2d\zeta < +\infty\right\}
\end{eqnarray*}
and the inner product is
\begin{eqnarray*}
\langle u,v\rangle_{s} & := &\int_{\mathbf{R}^n}uvdx + \int_{\mathbf{R}^n}\int_{\mathbf{R}^n}\frac{(u(x) - u(y))(v(x)-v(y))}{|x - y|^{n + 2s}}dxdy.
\end{eqnarray*}
Let
$$
[u]_{s} := [u]_{H^{s}(\mathbf{R}^{n})} = \left(\int_{\mathbf{R}^n}\int_{\mathbf{R}^n}\frac{|u(x) - u(y)|^2}{|x - y|^{n + 2s}}dxdy\right)^\frac{1}{2}
$$
be the Gagliardo (semi) norm of $u$. The following identity yields the relation between the fractional operator $(-\Delta)^s$ and the fractional Sobolev space $H^{s}(\mathbf{R}^{n})$,
$$
[u]_{H^{s}(\mathbf{R}^{n})} = C\left(\int_{\mathbf{R}^n}|\zeta|^{2s}|\mathcal{F}u(\zeta)|^2d\zeta\right)^{\frac{1}{2}} = C\|(-\Delta)^{\frac{s}{2}}u\|_{L^2(\mathbf{R}^n)}
$$
for a suitable positive constant $C$ depending only on $s$ and $n$.

When $s > 1$ and it is not an integer we write $s = m + \sigma$, where $m$ is an integer and $\sigma\in (0, 1)$. In this case the space $H^{s}(\mathbf{R}^{n})$
consists of those equivalence classes of functions $u\in H^{m}(\mathbf{R}^{n})$ whose distributional derivatives $D^{J} u$, with $|J| = m$, belong to $H^{\sigma}(\mathbf{R}^{n})$, namely
\begin{eqnarray*}
H^{s}(\mathbf{R}^{n}) = \left\{u\in H^{m}(\mathbf{R}^{n}): D^{J} u\in H^{\sigma}(\mathbf{R}^{n}) \mbox{\,\,for any\,\,}J \mbox{\,\,with\,\,} |J| = m\right\}
\end{eqnarray*}
and this is a Banach space with respect to the norm
\begin{eqnarray*}
\|u\|_{s} := \|u\|_{H^{s}(\mathbf{R}^{n})} = \left(\|u\|^2_{H^{m}(\mathbf{R}^{n})} + \displaystyle\sum_{|J| = m}\|D^{J} u\|^2_{H^{\sigma}(\mathbf{R}^{n})}\right)^\frac{1}{2}.
\end{eqnarray*}
Clearly, if $s = m$ is an integer, the space $H^{s}(\mathbf{R}^{n})$ coincides with the usual Sobolev space $H^{m}(\mathbf{R}^{n})$. By this notation, we denote the norm of $L^2(\mathbf R^n)$ by $\|\cdot\|_0$.

For a general domain $\Omega$, the space $H^s(\Omega)$ can be defined similarly.

On the Sobolev inequality and the compactness of embedding, one has
\begin{theorem}\cite{Ad:SS}\label{l:embedding}
Let $\Omega$ be a domain with smooth boundary in $\mathbf{R}^n$. Let $s > 0$, then
\begin{enumerate}
\item[(a)]If $n > 2s$, then $H^{s}(\Omega)\hookrightarrow L^{r}(\Omega)$ for $2\leq r \leq 2n/(n - 2s)$,
\item[(b)]If $n = 2s$, then $H^{s}(\Omega)\hookrightarrow L^{r}(\Omega)$ for $2\leq r < \infty$,
\end{enumerate}
\end{theorem}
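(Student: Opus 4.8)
The plan is to reduce the assertion to the model case $\Omega=\mathbf R^n$ and then to obtain the Lebesgue embeddings on $\mathbf R^n$ from the Hardy--Littlewood--Sobolev inequality (for part (a)) and from the Hausdorff--Young inequality (for part (b)). First I would invoke the extension theorem for fractional Sobolev spaces: since $\partial\Omega$ is smooth, there is a bounded linear operator $E\colon H^s(\Omega)\to H^s(\mathbf R^n)$ with $(Eu)|_\Omega=u$ and $\|Eu\|_{H^s(\mathbf R^n)}\le C\|u\|_{H^s(\Omega)}$ (see \cite{Ad:SS}). Hence, once the continuous embedding $H^s(\mathbf R^n)\hookrightarrow L^r(\mathbf R^n)$ is known, one gets $\|u\|_{L^r(\Omega)}\le\|Eu\|_{L^r(\mathbf R^n)}\le C\|Eu\|_{H^s(\mathbf R^n)}\le C'\|u\|_{H^s(\Omega)}$. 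I would also record the classical fact that on $\mathbf R^n$ the intrinsic norm $\|\cdot\|_s$ (defined above via the Gagliardo seminorm when $0<s<1$, and via top-order derivatives when $s>1$) is equivalent to the Bessel-potential norm $u\mapsto\bigl(\int_{\mathbf R^n}(1+|\zeta|^2)^{s}|\mathcal{F}u(\zeta)|^2\,d\zeta\bigr)^{1/2}$; equivalently, every $u\in H^s(\mathbf R^n)$ can be written as $u=\mathcal G_s* g$ with $\|g\|_{L^2(\mathbf R^n)}\le C\|u\|_s$, where $\mathcal G_s$ is the Bessel kernel with $\mathcal{F}\mathcal G_s(\zeta)=(1+|\zeta|^2)^{-s/2}$.

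For part (a), assume $n>2s$ and set $2^{\ast}_s=2n/(n-2s)$, so that $1/2^{\ast}_s=1/2-s/n$. Using the representation $u=\mathcal G_s* g$ together with the standard pointwise bounds $\mathcal G_s(x)\le C|x|^{s-n}$ for $|x|\le1$ and $\mathcal G_s(x)\le Ce^{-|x|/2}$ for $|x|\ge1$, one has $\mathcal G_s\in L^1(\mathbf R^n)+L^{\frac{n}{\,n-s\,},\infty}(\mathbf R^n)$. The Hardy--Littlewood--Sobolev (weak Young) inequality with the exponent relation $1+1/2^{\ast}_s=\tfrac{n-s}{n}+\tfrac12$ then yields $\|u\|_{L^{2^{\ast}_s}(\mathbf R^n)}=\|\mathcal G_s* g\|_{L^{2^{\ast}_s}(\mathbf R^n)}\le C\|g\|_{L^2(\mathbf R^n)}\le C'\|u\|_s$, i.e.\ the endpoint embedding $H^s(\mathbf R^n)\hookrightarrow L^{2^{\ast}_s}(\mathbf R^n)$. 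Since $H^s(\mathbf R^n)\hookrightarrow L^2(\mathbf R^n)$ is trivial and $L^2(\mathbf R^n)\cap L^{2^{\ast}_s}(\mathbf R^n)\subset L^r(\mathbf R^n)$ for every $r\in[2,2^{\ast}_s]$ by H\"older's inequality (interpolation of Lebesgue norms), we obtain $\|u\|_{L^r(\mathbf R^n)}\le C\|u\|_s$ for all such $r$, and the extension step transfers this to $\Omega$. (For $0<s<1$ the endpoint inequality may alternatively be proved directly from the Gagliardo seminorm by a dyadic decomposition, as in \cite{NPV:HG}.)

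For part (b), $n=2s$, the Riesz-potential argument degenerates at $r=\infty$, so I would argue on the Fourier side instead. Write $\mathcal{F}u(\zeta)=\bigl[(1+|\zeta|^2)^{n/4}\mathcal{F}u(\zeta)\bigr]\cdot(1+|\zeta|^2)^{-n/4}$. The first factor lies in $L^2(\mathbf R^n)$ with norm $\le C\|u\|_s$, and $(1+|\zeta|^2)^{-n/4}\in L^q(\mathbf R^n)$ for every $q>2$ (since then $qn/2>n$). By H\"older's inequality, $\mathcal{F}u\in L^p(\mathbf R^n)$ with $\|\mathcal{F}u\|_{L^p}\le C\|u\|_s$ for every $p$ with $1/p=1/2+1/q$, i.e.\ for every $p\in(1,2]$. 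The Hausdorff--Young inequality then gives $u\in L^{p'}(\mathbf R^n)$ with $\|u\|_{L^{p'}(\mathbf R^n)}\le C\|\mathcal{F}u\|_{L^p(\mathbf R^n)}\le C'\|u\|_s$ for the conjugate exponent $p'\in[2,\infty)$; combined with the trivial case $r=2$ this yields $H^s(\mathbf R^n)\hookrightarrow L^r(\mathbf R^n)$ for all $r\in[2,\infty)$, and again the extension operator transfers the estimate to $\Omega$.

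The work is mostly bookkeeping; the two points requiring care are (i) the existence of the bounded extension operator $E\colon H^s(\Omega)\to H^s(\mathbf R^n)$, which is where the smoothness of $\partial\Omega$ is used, and (ii) the identification of the intrinsic norm with the Bessel-potential norm for $s>1$, together with the sharp singularity and exponential decay of $\mathcal G_s$ needed to legitimately invoke the Hardy--Littlewood--Sobolev inequality; both are classical and may be quoted from \cite{Ad:SS} (see also \cite{NPV:HG}). The one genuinely structural point is the breakdown of the endpoint in the critical dimension $n=2s$, which is exactly why part (b) is handled through Hausdorff--Young rather than the Riesz potential and why no largest admissible exponent survives there.
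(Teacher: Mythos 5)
The paper offers no proof of this statement—it is quoted verbatim from \cite{Ad:SS}—so there is nothing internal to compare against; your argument (extension to $\mathbf R^n$, Bessel-potential representation plus Hardy--Littlewood--Sobolev for $n>2s$, Hausdorff--Young on the Fourier side for $n=2s$) is correct and is essentially the standard route taken in the cited reference. The only caveat worth recording is that for an \emph{unbounded} domain, ``smooth boundary'' alone does not guarantee a bounded extension operator $E\colon H^s(\Omega)\to H^s(\mathbf R^n)$ without some uniformity of the boundary (a uniform cone or uniform Lipschitz condition); this is harmless here, since the paper only ever applies the theorem on $\mathbf R^n$ and on balls.
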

\begin{theorem}\cite{Sh:POST}\label{l:compactness}
Let $s>s'$ and $\Omega$ be a bounded domain with smooth boundary in $\mathbf R^n$. Then the embedding operator
\begin{equation*}
i_s^{s'}:H^s(\Omega)\to H^{s'}(\Omega)
\end{equation*}
is compact.
\end{theorem}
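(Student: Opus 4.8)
The plan is to deduce compactness of $i_s^{s'}$ from two standard ingredients: the compact embedding $H^s(\Omega)\hookrightarrow L^2(\Omega)$, together with an interpolation inequality bounding the $H^{s'}$-norm by a product of the $L^2$-norm and the $H^s$-norm. Since $(-\Delta)^s$ has no spectral gap on $\mathbf R^n$ and the functions at hand need not decay exponentially, I will not attempt any eigenfunction-diagonalization argument; instead I work entirely in Fourier variables and use the Fr\'echet--Kolmogorov (Riesz) criterion for precompactness in $L^2$. Throughout I use that on $\mathbf R^n$ the norm $\|\cdot\|_s$ is equivalent to $\big(\int_{\mathbf R^n}(1+|\zeta|^2)^s|\mathcal F v(\zeta)|^2\,d\zeta\big)^{1/2}$, which is already recorded in the preliminaries.

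Fix a bounded sequence $(u_k)$ in $H^s(\Omega)$. Since $\partial\Omega$ is smooth there is a bounded extension operator $E:H^s(\Omega)\to H^s(\mathbf R^n)$ with $(Eu)|_\Omega=u$; multiplying by a fixed $\chi\in C_c^\infty(\mathbf R^n)$ with $\chi\equiv 1$ on $\overline\Omega$, the functions $\tilde u_k:=\chi E u_k$ are supported in one fixed ball $B$ and satisfy $\|\tilde u_k\|_s\le C$ with $(\tilde u_k)|_\Omega=u_k$. I then check the Fr\'echet--Kolmogorov conditions for $(\tilde u_k)$ in $L^2(\mathbf R^n)$: the common compact support gives uniformly negligible mass near infinity, and equicontinuity of translations follows from the estimate, valid for every $R>0$,
\begin{equation*}
\|\tilde u_k(\cdot+h)-\tilde u_k\|_{0}^2=\int_{\mathbf R^n}\bigl|e^{i h\cdot\zeta}-1\bigr|^2|\mathcal F\tilde u_k(\zeta)|^2\,d\zeta\le C\bigl(|h|^2 R^2+R^{-2s}\bigr),
\end{equation*}
obtained from $|e^{ih\cdot\zeta}-1|\le\min(|h||\zeta|,2)$ and $\|\tilde u_k\|_s\le C$; the right-hand side is made small uniformly in $k$ by first choosing $R$ large and then $|h|$ small. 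Hence $(\tilde u_k)$ is precompact in $L^2(\mathbf R^n)$, and after passing to a subsequence it is Cauchy in $L^2(\mathbf R^n)$. (Alternatively, this $L^2$-precompactness can be obtained directly from the Gagliardo seminorm bound as in the fractional Rellich theorem, but the Fourier route is shortest.)

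Next I use the elementary interpolation inequality
\begin{equation*}
\|v\|_{H^{s'}(\mathbf R^n)}\le \|v\|_{0}^{\theta}\,\|v\|_{H^{s}(\mathbf R^n)}^{1-\theta},\qquad \theta=\frac{s-s'}{s}\in(0,1],
\end{equation*}
which follows by applying H\"older's inequality with exponents $1/\theta$ and $1/(1-\theta)$ to the pointwise factorization $(1+|\zeta|^2)^{s'}=\bigl[(1+|\zeta|^2)^{0}\bigr]^{\theta}\bigl[(1+|\zeta|^2)^{s}\bigr]^{1-\theta}$ inside $\int(1+|\zeta|^2)^{s'}|\mathcal F v|^2$. (Taking $v=u$ also shows $\|v\|_{H^{s'}}\le\|v\|_{H^{s}}$, so $i_s^{s'}$ is well defined and continuous.) Applying the inequality to the differences $v=\tilde u_k-\tilde u_l$, which are uniformly bounded in $H^s(\mathbf R^n)$ and Cauchy in $L^2(\mathbf R^n)$ along the chosen subsequence, shows that $(\tilde u_k)$ is Cauchy in $H^{s'}(\mathbf R^n)$; restricting to $\Omega$ yields a subsequence of $(u_k)$ convergent in $H^{s'}(\Omega)$. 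This proves that $i_s^{s'}$ is compact.

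Finally, the case $s>1$ (or $s'\ge 1$) requires no modification: the Fourier-weighted norm is equivalent to $\|\cdot\|_s$ for every $s>0$, a bounded extension operator $H^s(\Omega)\to H^s(\mathbf R^n)$ still exists for smooth bounded $\Omega$, and both the Fr\'echet--Kolmogorov step and the interpolation inequality go through verbatim. I expect the only genuinely delicate point to be the uniformity in $k$ of the translation-continuity estimate; this is exactly what the two-stage choice of $R$ and then $h$ above handles, and the rest is routine manipulation of the extension operator and H\"older's inequality.
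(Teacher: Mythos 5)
The paper does not prove this statement at all --- it is imported verbatim from the reference \cite{Sh:POST} --- so there is no in-paper argument to compare against; I can only assess your proof on its own merits, and it is correct. The three ingredients (bounded extension plus cutoff to reduce to a fixed compact support, Fr\'echet--Kolmogorov in $L^2(\mathbf R^n)$ via the Plancherel identity and the bound $|e^{ih\cdot\zeta}-1|\le\min(|h||\zeta|,2)$ split at $|\zeta|=R$, and the H\"older interpolation $\|v\|_{H^{s'}}\le\|v\|_0^{\theta}\|v\|_{H^s}^{1-\theta}$) are all standard and correctly executed, and the two-stage choice of $R$ then $h$ does give the required uniformity in $k$. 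Two small remarks. First, your interpolation step tacitly assumes $s'\ge 0$ (otherwise $\theta=(s-s')/s>1$ and the H\"older exponents are illegitimate); this costs nothing, since for $s'<0$ the continuous embedding $L^2\hookrightarrow H^{s'}$ lets you conclude directly from the $L^2$-precompactness, but it should be said --- and in any case the paper only ever uses the case $s'=0$, i.e.\ compactness of $H^s(B_{r+1})\hookrightarrow L^2(B_{r+1})$ in the proof that $\phi_\infty=0$. Second, the final restriction step uses that $v\mapsto v|_\Omega$ is bounded from $H^{s'}(\mathbf R^n)$ to $H^{s'}(\Omega)$ with the intrinsic (Gagliardo or derivative-plus-Gagliardo) norm the paper adopts; this is immediate since the defining integrals over $\Omega\times\Omega$ are dominated by those over $\mathbf R^n\times\mathbf R^n$, but it is the one place where the definition of $H^{s'}(\Omega)$ actually enters and deserves a sentence.
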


\subsection{Some results for the standard equation}\label{ss:se}
We recall some basic properties of the solutions to the following equation
\begin{equation}\label{e:standard}
(-\Delta)^{s}u+u-|u|^{p-1}u=0.
\end{equation}
The solutions of (\ref{e:standard}) are the critical points of $f_0$ given by (\ref{e:f0u}).
The non-degeneracy of the standard solution to Equation (\ref{e:standard}) is investigated by many works. For our purpose, we recall the following theorem. (For more results and details on this topic, see, for example, \cite{FS:URSFL}, \cite{FL:UNGS}, \cite{FQT:PSNFS}, \cite{FaVa13:UNPS} and the references therein.)
\begin{theorem}\label{t:unda}
There exists a unique solution (up to translation) $U\in H^{2s+1}(\mathbf R^n)$ to (\ref{e:standard}) such that
\begin{equation*}\label{e:uday}
\frac{C_1}{1+|x|^{n+2s}}\le U(x)\le \frac{C_2}{1+|x|^{n+2s}}, \quad \mbox{ for }\,x\in\mathbf R^n,
\end{equation*}
with some constants $0< C_1\le C_2$. Moreover, the linearized operator $L_0$ at $U$ is non-degenerate, that is, its kernel is given by
\begin{equation*}
{\rm ker} L_0={\rm span}\{\partial_{x_1}U,\cdots,\partial_{x_n}U\}.
\end{equation*}
\end{theorem}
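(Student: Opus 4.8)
The plan is to assemble Theorem \ref{t:unda} from the results of Frank--Lenzmann \cite{FL:UNGS}, Frank--Lenzmann--Silvestre \cite{FS:URSFL} and Felmer--Quaas--Tan \cite{FQT:PSNFS}; I sketch how each piece would be obtained. For \emph{existence} I would produce a ground state by constrained minimization: minimize $\|u\|_s^2$ subject to $\|u\|_{L^{p+1}(\mathbf R^n)}=1$ (equivalently, minimize $f_0$ on its Nehari manifold and rescale). Since the problem is $O(n)$-invariant, I would restrict to radial functions, where for $1<p<\frac{n+2s}{n-2s}$ the embedding $H^s_{\mathrm{rad}}(\mathbf R^n)\hookrightarrow L^{p+1}(\mathbf R^n)$ is compact --- the fractional analogue of the Strauss lemma --- so a minimizing sequence converges strongly and yields a minimizer $U$. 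Replacing $U$ by $|U|$, which does not increase the Gagliardo seminorm $[\cdot]_s$, and invoking the strong maximum principle for $(-\Delta)^s+1$, one may take $U>0$ everywhere. Rewriting the equation as the integral identity $U=\mathcal G_s * U^p$, where $\mathcal G_s$ denotes the kernel of $\big((-\Delta)^s+1\big)^{-1}$, an $L^q$-bootstrap combined with the smoothing of $\mathcal G_s$ would push $U$ into $H^{2s+1}(\mathbf R^n)$ (in fact $U$ is smooth).

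For the \emph{decay estimate} the decisive fact is that $\mathcal G_s(x)$ is comparable to $(1+|x|)^{-(n+2s)}$: it is smooth and positive away from the origin and behaves like $c_{n,s}|x|^{-(n+2s)}$ as $|x|\to\infty$. Inserting this into $U=\mathcal G_s * U^p$ and iterating gives the upper bound $U(x)\le C_2(1+|x|^{n+2s})^{-1}$, and the matching lower bound is immediate from the positivity of $\mathcal G_s$ and of $U^p$. This algebraic (rather than exponential) tail is the essential new feature for $s<1$ and is precisely what forces the restriction $n>4-4s$ used later.

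The hard part is \emph{uniqueness}. First I would show that every positive $H^s$ solution vanishing at infinity is, up to translation, radially decreasing, using the moving-plane method adapted to the nonlocal operator (or the equivalent integral equation and the method of moving spheres). Uniqueness within the radial class is then the genuinely delicate step: following \cite{FL:UNGS, FS:URSFL}, one studies the linearized operator along the branch of radial ground states, proves it is non-degenerate in the radial sector so that no secondary branch can bifurcate, and continues the known uniqueness at $s=1$ down to all $s\in(0,1)$. The \emph{non-degeneracy of} $L_0=(-\Delta)^s+1-pU^{p-1}$ comes out of the same analysis: decompose $L^2(\mathbf R^n)$ into spherical-harmonic sectors; the translations $\partial_{x_i}U$ sit in the $\ell=1$ sector, where one shows the kernel is exactly $n$-dimensional, while the ground-state property of $U$ excludes a kernel in the $\ell=0$ sector and a positivity argument excludes it for $\ell\ge2$.

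I expect the uniqueness/non-degeneracy circle to be the main obstacle: unlike the case $s=1$ there is no phase-plane ODE to exploit, and one is forced into the rather involved spectral and continuation analysis of \cite{FL:UNGS, FS:URSFL}. Since these statements are only used as a black box in the sequel, for the present paper it suffices to cite \cite{FL:UNGS, FS:URSFL, FQT:PSNFS}.
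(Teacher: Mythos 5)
Your proposal is consistent with the paper's treatment: the paper does not prove this theorem but simply cites \cite{FS:URSFL}, \cite{FL:UNGS}, \cite{FQT:PSNFS} (and \cite{FaVa13:UNPS}), exactly as you conclude is appropriate. Your sketch of the underlying arguments in those references (constrained minimization and Strauss-type compactness for existence, comparison with the Bessel-type kernel $\mathcal G_s$ for the polynomial decay, and the Frank--Lenzmann(--Silvestre) spectral/continuation analysis for uniqueness and non-degeneracy) is accurate, but for the purposes of this paper the citation is the whole proof.
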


\begin{remark}\label{r:pu}
By Lemma C.2 of \cite{FS:URSFL}, $\nabla U$ satisfies
\begin{equation*}\label{e:ps}
|\nabla U(x)|\le\frac{C}{1+|x|^{n+2s}},
\end{equation*}
for some constant $C$.
\end{remark}

\begin{remark}\label{r:lce}
The non-degeneracy of $L_0$ yields the coercivity estimate as follows:
\begin{equation*}
\langle L_0\phi,\phi\rangle_0 \ge C\|\phi\|_s^2 \quad \mbox{ for } \phi\perp K,
\end{equation*}
where $C$ is a positive constant, and $K$ is a suitable chosen $(n+1)$-dimensional subspace. For example, we can choose $K={\rm span}\{\phi_{-1},\partial_{x_1}U,\cdots,\partial_{x_n}U\}$ with $\phi_{-1}$ being the linear ground state of $L_0$. For more details, see \cite[Section 3]{FS:URSFL}.
\end{remark}

\subsection{Critical points of $F_{\varepsilon,\xi}$}\label{sb:cuf}
Let
\begin{equation}\label{d:a}
a=a(\xi)=(1+V(\xi))^{\frac{1}{2s}}
\end{equation}
and
\begin{equation}\label{d:b}
b=b(\xi)=[1+V(\xi)]^{\frac{1}{p-1}}.
\end{equation}
Then $bU(ax)$ solves (\ref{e:ss}). Set
\begin{equation}\label{e:dz}
z^{\varepsilon\xi}=b(\varepsilon\xi)U(a(\varepsilon\xi )x)
\end{equation}
and
\begin{equation*}
Z^{\varepsilon}=\left\{z^{\varepsilon\xi}(x-\xi)\,|\,\xi\in\mathbf R^n\right\}.
\end{equation*}
Therefore, every point in $ Z^{\varepsilon}$ is a critical point of (\ref{f:xi}) or, equivalently, a solution to Equation (\ref{e:ss}).
For simplicity, we will set $z=z_{\xi}=z_{\varepsilon,\xi}=z^{\varepsilon\xi}(x-\xi)$.

\section{Some estimates}\label{s:estimates}
In this section, we prove some useful estimates for future reference. From now on, $C$ denotes various constants.
\begin{lemma}\label{l:pxe}
Let $\bar\rho>0$. For $\varepsilon$ sufficiently small and $|\xi|\le \bar\rho$,
there holds
\begin{equation}\label{e:pxpxs}
\partial_{\xi_i}z^{\varepsilon\xi}=-\partial_{x_i}z^{\varepsilon\xi}(x-\xi)+O(\varepsilon), \quad\mbox{ in }H^s(\mathbf R^n).
\end{equation}
\end{lemma}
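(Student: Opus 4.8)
The plan is to write $z^{\varepsilon\xi}(x) = b(\varepsilon\xi)U(a(\varepsilon\xi)x)$ and differentiate with respect to $\xi_i$, keeping careful track of where the $\xi$-dependence enters. There are two sources: the explicit argument shift (which will be present once we pass to $z^{\varepsilon\xi}(x-\xi)$, but for the function $z^{\varepsilon\xi}$ itself the only dependence is through $a$ and $b$), and the coefficients $a(\varepsilon\xi)$ and $b(\varepsilon\xi)$. By the chain rule, $\partial_{\xi_i} z^{\varepsilon\xi}(x) = \varepsilon\, b'(\varepsilon\xi)\partial_i V(\varepsilon\xi)\, U(a(\varepsilon\xi)x) + \varepsilon\, b(\varepsilon\xi) a'(\varepsilon\xi)\partial_i V(\varepsilon\xi)\, (x\cdot\nabla U)(a(\varepsilon\xi)x)$, where I abuse notation writing $a',b'$ for the derivatives of the scalar functions $t\mapsto (1+t)^{1/2s}$ and $t\mapsto (1+t)^{1/(p-1)}$. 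Wait — I need to reconcile this with the claimed right-hand side $-\partial_{x_i} z^{\varepsilon\xi}(x-\xi)$, which is $O(1)$, not $O(\varepsilon)$. The resolution is that the statement concerns $\partial_{\xi_i}\big[z^{\varepsilon\xi}(x-\xi)\big]$ evaluated through $z^{\varepsilon\xi}$ as a symbol: actually the correct reading is that $z_{\xi}(x) = z^{\varepsilon\xi}(x-\xi)$ and the $O(1)$ term comes precisely from differentiating the shift $x\mapsto x-\xi$. So first I would make explicit that the lemma should be read as: $\partial_{\xi_i}\big(z^{\varepsilon\xi}(x-\xi)\big) = -\partial_{x_i}z^{\varepsilon\xi}(x-\xi) + O(\varepsilon)$ in $H^s$, where the leading term is the shift contribution and the $O(\varepsilon)$ error collects the two terms above coming from $a(\varepsilon\xi), b(\varepsilon\xi)$.

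With that understood, the proof reduces to two estimates. First, $\|U(a(\varepsilon\xi)\,\cdot\,)\|_s \le C$ uniformly for $\varepsilon$ small and $|\xi|\le\bar\rho$: since $V\in C^3_b$ and $V\ge 0$, we have $a(\varepsilon\xi)$ bounded above and below away from zero on this range, and the $H^s$-norm of $U(\lambda\,\cdot\,)$ depends continuously on $\lambda\in[\lambda_0,\lambda_1]\subset(0,\infty)$ by the scaling $\|U(\lambda\cdot)\|_0^2 = \lambda^{-n}\|U\|_0^2$ and $[U(\lambda\cdot)]_s^2 = \lambda^{2s-n}[U]_s^2$, both finite since $U\in H^{2s+1}(\mathbf R^n)\subset H^s$. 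Second, I need $\|(x\cdot\nabla U)(a(\varepsilon\xi)x)\|_s \le C$; after the substitution $y = a(\varepsilon\xi)x$ this becomes (up to the harmless scaling factors) a bound on $\|\,y\cdot\nabla U(y)\,\|_s$, i.e. on the $H^s$-norm of $\sum_j y_j \partial_j U$. This is where the dimension restriction $n>4-4s$ enters: using the decay $|\nabla U(y)|\le C(1+|y|)^{-(n+2s)}$ from Remark \ref{r:pu}, the function $y\cdot\nabla U$ decays like $(1+|y|)^{-(n+2s-1)}$, which is in $L^2(\mathbf R^n)$ iff $2(n+2s-1)>n$, i.e. $n>2-4s$ — automatically true — but the Gagliardo seminorm / $(-\Delta)^{s/2}$ norm requires the stronger control, and more to the point one also needs $x z^{\varepsilon\xi}$-type quantities to lie in $H^s$, which forces $n>4-4s$. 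I would cite Theorem \ref{t:unda} and Remark \ref{r:pu} for the pointwise decay of $U$ and $\nabla U$, and reduce the $H^s$ membership of $y\cdot\nabla U$ to a direct Gagliardo-seminorm computation, splitting $\mathbf R^n\times\mathbf R^n$ into the regions $|x-y|\le 1$ and $|x-y|\ge 1$ and using the decay plus (on the near-diagonal piece) the $C^3$ regularity of $U$ inherited from $U\in H^{2s+1}$ and elliptic regularity for the standard equation.

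The main obstacle is precisely this last $H^s$ (and implicitly $L^2$-weighted) bound for $x\cdot\nabla U$ and related quantities like $x\,U$: the slow polynomial decay $1/(1+|x|^{n+2s})$ of $U$ — which is sharp, unlike the exponential decay in the local case $s=1$ — means these weighted functions are only barely integrable, and it is exactly here that the hypothesis $n>4-4s$ is needed and cannot be removed. Once these two uniform bounds are in hand, the $O(\varepsilon)$ conclusion is immediate: the error equals $\varepsilon$ times $\partial_i V(\varepsilon\xi)$ (bounded, since $V\in C^1_b$) times a bounded combination of $U(a(\varepsilon\xi)\cdot)$ and $(x\cdot\nabla U)(a(\varepsilon\xi)\cdot)$, uniformly over $|\xi|\le\bar\rho$. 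I would also remark that the estimate is uniform in $\varepsilon$ and $\xi$ in the stated range, which is what later sections require.
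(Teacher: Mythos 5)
Your decomposition is the same as the paper's: the chain rule on $z^{\varepsilon\xi}(x-\xi)=b(\varepsilon\xi)U(a(\varepsilon\xi)(x-\xi))$ produces the $O(1)$ shift term $-\partial_{x_i}z^{\varepsilon\xi}(x-\xi)$ plus two $O(\varepsilon)$ terms from differentiating $a(\varepsilon\xi)$ and $b(\varepsilon\xi)$, and the whole point is to verify that the error terms, in particular the one involving $(x-\xi)\cdot[\nabla U](a(\varepsilon\xi)(x-\xi))$, lie in $H^s$ with a uniformly bounded norm. Where you depart from the paper is in how you establish this membership. The paper argues indirectly: it asserts that $\partial_{\xi_i}z^{\varepsilon\xi}(\cdot-\xi)\in H^s$ and that $Z_1,Z_3\in H^s$, and then concludes by subtraction that $Z_2\in H^s$, deducing $[\nabla U](\cdot)\cdot(\cdot)\in H^s$ as a corollary. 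This is somewhat circular, since justifying that the $\xi$-derivative exists \emph{in} $H^s$ in the first place already requires knowing that the candidate derivative $Z_1+Z_2+Z_3$ is an $H^s$-valued continuous function of $\xi$. Your direct route — proving $y\cdot\nabla U\in H^s$ from the pointwise decay of $\nabla U$ (Remark \ref{r:pu}), the smoothness of $U$, and a Gagliardo-seminorm computation split into near- and far-diagonal regions, then pulling back the uniform bound over $a(\varepsilon\xi)\in[\frac12,2]$ by scaling — is more self-contained and actually fills the logical gap in the paper's argument. The observation that the statement must be read as $\partial_{\xi_i}\bigl(z^{\varepsilon\xi}(x-\xi)\bigr)$, not $\partial_{\xi_i}z^{\varepsilon\xi}$ alone, is also a correct reading of the paper's slightly abusive notation.

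One correction: the hypothesis $n>4-4s$ does not enter this lemma, and your explanation of where it is used is off. For $y\cdot\nabla U\in L^2$ your own computation gives $n>2-4s$, which is already automatic for $n\ge 1$; and the Gagliardo seminorm does not impose a stronger decay requirement — the far-off-diagonal piece reduces to $\bigl(\int_{|z|\ge 1}|z|^{-n-2s}dz\bigr)\|y\cdot\nabla U\|_0^2$ (finite for $n>2-4s$), while the near-diagonal piece is controlled by the $L^2$ norm of $\nabla(y\cdot\nabla U)$, which has the same decay exponent. The restriction $n>4-4s$ is needed in Lemma \ref{l:vxz}, where integrals of the form $\int|x-\xi|^4 z_{\xi}^2\,dx\lesssim\int(1+|x-\xi|)^{4-2n-4s}dx$ appear; this is exactly what the paper's Remark after Theorem \ref{t:main} points to. You should delete the sentences asserting that $n>4-4s$ is forced by this lemma and, if you want, flag instead that the stronger condition becomes necessary only later.
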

\begin{proof}
A direct calculation gives
\begin{eqnarray*}
&&\partial_{\xi_i}z^{\varepsilon\xi}(x-\xi)=\partial_{\xi_i}\left[b(\varepsilon\xi)U(a(\varepsilon\xi)(x-\xi))\right]\\
&=&\varepsilon[\partial_{\xi_i}b](\varepsilon\xi)U(a(\varepsilon\xi)(x-\xi))+\varepsilon b(\varepsilon\xi)[\partial_{\xi_i}a](\varepsilon\xi)[\nabla U](a(\varepsilon\xi)(x-\xi))\cdot(x-\xi)\\
&&-a(\varepsilon\xi)b(\varepsilon\xi)[\partial_{x_i}U](a(\varepsilon\xi)(x-\xi)):=Z_1+Z_2+Z_3.
\end{eqnarray*}
Note that
\begin{equation*}
Z_3=-a(\varepsilon\xi)b(\varepsilon\xi)[\partial_{x_i}U](b(\varepsilon\xi)(x-\xi))=-\partial_{x_i}z^{\varepsilon\xi}(x-\xi).
\end{equation*}
By the definition of $a, b$ and assumption of $V$, we have that
$$|a(\varepsilon\xi)|\le C,\quad |b(\varepsilon\xi)|<C,\quad \left|[\partial_{\xi_i} a](\varepsilon\xi)\right|\le C, \quad \left|[\partial_{\xi_i}b](\varepsilon\xi)\right|<C$$
for some constant $C$. From assumption of $V$,
\begin{equation*}\label{e:avx}
|a(\varepsilon\xi)|\ge 1,\quad |b(\varepsilon\xi)|\ge 1.
\end{equation*}
Therefore, from $\partial_{x_i}U(\cdot-\xi)\in H^s(\mathbf R^n)$, we have $Z_3\in H^s(\mathbf R^n)$.
By $U\in H^s(\mathbf R^n)$, it holds that
\begin{equation}\label{e:z1}
\|Z_1\|_s=O(\varepsilon)\|[\partial_{\xi_i}b](\varepsilon\xi)U(a(\varepsilon\xi)(\cdot-\xi))\|_s=O(\varepsilon).
\end{equation}
Since $\partial_{\xi_i}z^{\varepsilon\xi}\in H^s(\mathbf R^n)$ and $Z_1,\,Z_3\in H^s(\mathbf R^n)$, we have that $Z_2$ is also in $H^s(\mathbf R^n)$.
It follows that $[\nabla U](a(\varepsilon\xi)(\cdot-\xi))\cdot(\cdot-\xi)\in H^s(\mathbf R^n)$. So, we obtain that $[\nabla U](\cdot-\xi)\cdot(\cdot-\xi)\in H^s(\mathbf R^n)$. Again, by the property of $a$, it holds that
\begin{equation}\label{e:z2}
\|Z_2\|_s=O(\varepsilon).
\end{equation}
From (\ref{e:z1}) and (\ref{e:z2}), we have (\ref{e:pxpxs}). This completes the proof.
\end{proof}

\begin{lemma}\label{l:vxz}
Given $\bar\rho>0$ and small $\bar\varepsilon>0$, we have that, if $|\xi|\le \bar\rho$ and $0<\varepsilon<\bar\varepsilon$, then
\begin{equation*}
\int_{\mathbf R^n}|V(\varepsilon x)-V(\varepsilon\xi)|^2 z_{\xi}^2dx\le C (\varepsilon^2|\nabla V(\varepsilon\xi)|^2+\varepsilon^4),
\end{equation*}
and
\begin{equation*}
\int_{\mathbf R^n}|V(\varepsilon x)-V(\varepsilon\xi)|^2 |\partial_{x_i}z_{\xi}|^2dx\le C (\varepsilon^2|\nabla V(\varepsilon\xi)|^2+\varepsilon^4).
\end{equation*}
\end{lemma}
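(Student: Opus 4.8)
The plan is to combine a second-order Taylor expansion of $V$ at the point $\varepsilon\xi$ with the explicit scaling structure of $z_\xi$ and the polynomial decay of $U$ recorded in Theorem \ref{t:unda} and Remark \ref{r:pu}.

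First, since $V\in C^3_b(\mathbf R^n)$, in particular $V\in C^2$ with $\|D^2V\|_{L^\infty}\le C$, Taylor's formula with remainder gives, for all $x,\xi$,
\[
|V(\varepsilon x)-V(\varepsilon\xi)|\le \varepsilon|\nabla V(\varepsilon\xi)|\,|x-\xi|+C\varepsilon^2|x-\xi|^2,
\]
and hence
\[
|V(\varepsilon x)-V(\varepsilon\xi)|^2\le 2\varepsilon^2|\nabla V(\varepsilon\xi)|^2|x-\xi|^2+2C^2\varepsilon^4|x-\xi|^4 .
\]
Substituting this into each of the two integrals, the claimed bounds will follow once we show that the moments
\[
\int_{\mathbf R^n}|x-\xi|^k z_\xi^2\,dx\quad\text{and}\quad\int_{\mathbf R^n}|x-\xi|^k|\partial_{x_i}z_\xi|^2\,dx,\qquad k=2,4,
\]
are bounded uniformly for $|\xi|\le\bar\rho$ and $0<\varepsilon<\bar\varepsilon$.

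To estimate these moments I would rescale by $y=a(\varepsilon\xi)(x-\xi)$. Using $z_\xi(x)=b(\varepsilon\xi)U(a(\varepsilon\xi)(x-\xi))$ and $\partial_{x_i}z_\xi(x)=a(\varepsilon\xi)b(\varepsilon\xi)(\partial_{x_i}U)(a(\varepsilon\xi)(x-\xi))$, the change of variables yields
\[
\int_{\mathbf R^n}|x-\xi|^k z_\xi^2\,dx=b^2 a^{-n-k}\!\int_{\mathbf R^n}|y|^k U(y)^2\,dy,\qquad
\int_{\mathbf R^n}|x-\xi|^k|\partial_{x_i}z_\xi|^2\,dx=b^2 a^{2-n-k}\!\int_{\mathbf R^n}|y|^k|\nabla U(y)|^2\,dy.
\]
Since $1\le a(\varepsilon\xi)\le C$ and $1\le b(\varepsilon\xi)\le C$ on the relevant range (as already used in the proof of Lemma \ref{l:pxe}), the prefactors $b^2a^{-n-k}$ and $b^2a^{2-n-k}$ are bounded, so it suffices to check that $\int_{\mathbf R^n}|y|^k U(y)^2\,dy$ and $\int_{\mathbf R^n}|y|^k|\nabla U(y)|^2\,dy$ are finite for $k=2,4$. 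By $U(y)\le C_2(1+|y|^{n+2s})^{-1}$ and $|\nabla U(y)|\le C(1+|y|^{n+2s})^{-1}$, each integrand is dominated near infinity by $|y|^{k-2(n+2s)}$, and the corresponding integral converges precisely when $k<n+4s$. For $k=4$ this is exactly the standing hypothesis $n>4-4s$, and $k=2$ is then automatic; this is the single place where the dimensional restriction is genuinely needed.

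Combining the pieces, for each of the two assertions I split $|V(\varepsilon x)-V(\varepsilon\xi)|^2$ as above, bound the $|x-\xi|^2$ contribution by $C\varepsilon^2|\nabla V(\varepsilon\xi)|^2$ and the $|x-\xi|^4$ contribution by $C\varepsilon^4$, and add. The only real obstacle is the rigorous, uniform control of the moments; via the rescaling this reduces to the convergence of the two fixed integrals above, which is where $n>4-4s$ enters. Everything else is routine: the Taylor estimate is standard, and uniformity in $\xi$ and $\varepsilon$ comes for free since, after rescaling, the remaining integrals no longer depend on $\xi$ or $\varepsilon$, while the amplitude and Jacobian factors $a,b$ are bounded above and below.
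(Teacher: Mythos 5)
Your proposal is correct and follows essentially the same route as the paper's proof: a second-order Taylor expansion of $V$ at $\varepsilon\xi$, followed by uniform estimates of the second and fourth moments of $z_\xi^2$ and $|\partial_{x_i}z_\xi|^2$ obtained via the rescaling $y=a(\varepsilon\xi)(x-\xi)$ and the polynomial decay of $U$ and $\nabla U$, with $n>4-4s$ entering exactly through the convergence of the fourth moment. Your presentation is, if anything, slightly more systematic than the paper's in making the change of variables and the condition $k<n+4s$ explicit for both integrals, but the underlying argument is the same.
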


\begin{proof}
Since $V\in C^3_b(\mathbf R^n)$ implies that $|\nabla V(x)|\le C$ and $|D^2V(x)|\le C$, it holds that
\begin{equation*}
|V(\varepsilon x)-V(\varepsilon \xi)|\le \varepsilon |\nabla V(\varepsilon \xi)|\cdot |x-\xi|+C\varepsilon^2 |x-\xi|^2.
\end{equation*}
Therefore,
\begin{eqnarray*}
\int_{\mathbf R^n}|V(\varepsilon x)-V(\varepsilon \xi)|^2z_{\xi}^2dx&\le& C\varepsilon^2|\nabla V(\varepsilon \xi)|^2
\int_{\mathbf R^n}|x-\xi|^2z^2_{\xi}(x-\xi)dx\\
&&+C\varepsilon^4\int_{\mathbf R^n}|x-\xi|^4z^2_{\xi}(x-\xi)dx.
\end{eqnarray*}
By the definition of $z_{\xi}$,
\begin{eqnarray*}
\int_{\mathbf R^n}|x-\xi|^2z^2_{\xi}(x-\xi)dx&=&b^2(\varepsilon\xi)\int_{\mathbf R^n}|y|^2U^2(a(\varepsilon \xi)y)dy\\
&=&a^{-n-2}b^2\int_{\mathbf R^n}|y'|^2U^2(y')dy'.
\end{eqnarray*}
Using Theorem \ref{t:unda}, we obtain
\begin{equation*}
\int_{\mathbf R^n}|y'|^2U^2(y')dy'\le C_2\int_{\mathbf R^n}\frac{|y'|^2}{(1+|y'|)^{2n+4s}}\le C.
\end{equation*}
Since we assume $n> 4-4s$, it follows that
\begin{eqnarray*}
\int_{\mathbf R^n}|x-\xi|^4z^2_{\xi}(x-\xi)dx&\le& C_2\int_{\mathbf R^n}\frac{|x-\xi|^4}{(1+|x-\xi|)^{2n+4s}}dx\\
&\le& C_2\int_{\mathbf R^n}\frac{1}{(1+|x-\xi|)^{2n+4s-4}}dx\le C.
\end{eqnarray*}
Therefore, we get
\begin{equation}\label{e:vxvxi}
\int_{\mathbf R^n}|V(\varepsilon x)-V(\varepsilon \xi)|^2z_{\xi}^2dx\le C(\varepsilon^2|\nabla V(\varepsilon\xi)|^2+\varepsilon^4).
\end{equation}
For the second estimate, we have
\begin{eqnarray*}
\int_{\mathbf R^n}|V(\varepsilon x)-V(\varepsilon \xi)|^2|\partial_{x_i}z_{\xi}|^2dx&\le& C\varepsilon^2|\nabla V(\varepsilon \xi)|^2
\int_{\mathbf R^n}|x-\xi|^2|\partial_{x_i}z_{\xi}(x-\xi)|^2dx\\
&&+C\varepsilon^4\int_{\mathbf R^n}|x-\xi|^4|\partial_{x_i}z_{\xi}(x-\xi)|^2dx.
\end{eqnarray*}
By Remark \ref{r:pu}, $|\partial_{x_i}z_{\xi}(x-\xi)|\le \frac{C}{1+|x-\xi|^{n+2s}}$. Then a similar argument as the proof of (\ref{e:vxvxi}) gives
\begin{equation*}
\int_{\mathbf R^n}|V(\varepsilon x)-V(\varepsilon\xi)|^2 |\partial_{x_i}z_{\xi}|^2dx\le C (\varepsilon^2|\nabla V(\varepsilon\xi)|^2+\varepsilon^4).
\end{equation*}
This completes the proof.
\end{proof}

\begin{lemma}\label{l:dgss}
Given $\bar\rho>0$ and small $\bar\varepsilon>0$, it holds that, for $|\xi|\le \bar\rho$ and $0<\varepsilon<\bar\varepsilon$,
\begin{equation}\label{e:dg}
\|Df_{\varepsilon}(z_{\xi})\|_s\le C(\varepsilon|\nabla V(\varepsilon\xi)|+O(\varepsilon^2)),
\end{equation}
for some constant $C$.
\end{lemma}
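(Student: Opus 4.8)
The plan is to exploit that $z_\xi=z_{\varepsilon,\xi}$ is an \emph{exact} critical point of the ``frozen'' functional $F_{\varepsilon,\xi}$ from (\ref{f:xi}), so that the gradient $Df_\varepsilon(z_\xi)$ measures only the discrepancy between the true potential $V(\varepsilon x)$ and its value $V(\varepsilon\xi)$ at the concentration point $\xi$. Concretely, I would first write out, for any test function $\phi\in H^s(\mathbf R^n)$,
\begin{equation*}
Df_\varepsilon(z_\xi)[\phi]=\langle z_\xi,\phi\rangle_s-\int_{\mathbf R^n}|z_\xi|^{p-1}z_\xi\,\phi\,dx+\int_{\mathbf R^n}V(\varepsilon x)z_\xi\phi\,dx,
\end{equation*}
and compare it with $DF_{\varepsilon,\xi}(z_\xi)[\phi]$, which vanishes identically since $z_\xi\in Z^\varepsilon$ solves (\ref{e:ss}) by the construction in Section \ref{sb:cuf}. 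Subtracting, every term except the potential terms cancels, leaving
\begin{equation*}
Df_\varepsilon(z_\xi)[\phi]=\int_{\mathbf R^n}\bigl(V(\varepsilon x)-V(\varepsilon\xi)\bigr)z_\xi\,\phi\,dx.
\end{equation*}

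Next I would estimate this linear functional of $\phi$ by Cauchy--Schwarz in $L^2(\mathbf R^n)$, using $\|\phi\|_0\le\|\phi\|_s$:
\begin{equation*}
\bigl|Df_\varepsilon(z_\xi)[\phi]\bigr|\le\left(\int_{\mathbf R^n}|V(\varepsilon x)-V(\varepsilon\xi)|^2z_\xi^2\,dx\right)^{1/2}\|\phi\|_s.
\end{equation*}
Taking the supremum over $\phi$ with $\|\phi\|_s\le1$ yields
\begin{equation*}
\|Df_\varepsilon(z_\xi)\|_s\le\left(\int_{\mathbf R^n}|V(\varepsilon x)-V(\varepsilon\xi)|^2z_\xi^2\,dx\right)^{1/2}.
\end{equation*}
Now the first estimate of Lemma \ref{l:vxz} bounds the right-hand side by $C\bigl(\varepsilon^2|\nabla V(\varepsilon\xi)|^2+\varepsilon^4\bigr)^{1/2}$, and using $(a+b)^{1/2}\le a^{1/2}+b^{1/2}$ for $a,b\ge0$ gives $\|Df_\varepsilon(z_\xi)\|_s\le C\bigl(\varepsilon|\nabla V(\varepsilon\xi)|+\varepsilon^2\bigr)$, which is precisely (\ref{e:dg}).

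There is essentially no serious obstacle here: the entire difficulty has been pushed into Lemma \ref{l:vxz}, whose proof in turn used the dimensional restriction $n>4-4s$ to make $\int_{\mathbf R^n}|x-\xi|^4z_\xi^2\,dx$ converge. The only points requiring a word of care are (i) that one may legitimately differentiate $f_\varepsilon$ at $z_\xi$ and that $Df_\varepsilon(z_\xi)$ is a bounded linear functional on $H^s(\mathbf R^n)$, which follows from $f_\varepsilon\in C^2(H^s(\mathbf R^n))$ as noted after (\ref{e:f0u}), and (ii) the uniformity of the constant $C$ for $|\xi|\le\bar\rho$ and $0<\varepsilon<\bar\varepsilon$, which is inherited directly from the corresponding uniformity in Lemma \ref{l:vxz} together with the bounds $1\le a(\varepsilon\xi)\le C$, $1\le b(\varepsilon\xi)\le C$ established in the proof of Lemma \ref{l:pxe}.
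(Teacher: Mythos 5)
Your proof is correct and follows essentially the same route as the paper: decompose $f_\varepsilon$ as $F_{\varepsilon,\xi}$ plus the potential discrepancy, use that $z_\xi$ is a critical point of $F_{\varepsilon,\xi}$ to reduce $Df_\varepsilon(z_\xi)[\phi]$ to $\int(V(\varepsilon x)-V(\varepsilon\xi))z_\xi\phi\,dx$, estimate by Cauchy--Schwarz with $\|\phi\|_0\le\|\phi\|_s$, and invoke Lemma \ref{l:vxz}. The only differences are cosmetic (spelling out the supremum over the unit ball and the elementary inequality $(a+b)^{1/2}\le a^{1/2}+b^{1/2}$), which the paper leaves implicit.
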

\begin{proof}
Rewrite
\begin{equation*}
f_{\varepsilon}(u)=F^{\varepsilon \xi}(u)+\frac{1}{2}\int_{\mathbf R^n}(V(\varepsilon x)-V(\varepsilon \xi))u^2dx.
\end{equation*}
Since $z_{\xi}$ is a critical point of $F^{\varepsilon \xi}$, we get
\begin{eqnarray*}
\langle Df_{\varepsilon}(z_{\xi}),v\rangle_s &=& \langle DF^{\varepsilon \xi}(z_{\xi}),v\rangle_s+\int_{\mathbf R^n}(V(\varepsilon x)-V(\varepsilon \xi))z_{\xi}v dx\\
&=&\int_{\mathbf R^n}(V(\varepsilon x)-V(\varepsilon \xi))z_{\xi}v dx.
\end{eqnarray*}
By the H\"{o}lder inequality, we have
\begin{eqnarray*}
|\langle Df_{\varepsilon}(z_{\xi}),v\rangle|^2&\le& \|v\|_{0}^2\int_{\mathbf R^n}|V(\varepsilon x)-V(\varepsilon \xi)|^2z_{\xi}^2dx\\
&\le&\|v\|_s^2\int_{\mathbf R^n}|V(\varepsilon x)-V(\varepsilon \xi)|^2z_{\xi}^2dx.
\end{eqnarray*}
Then Lemma \ref{l:vxz} implies (\ref{e:dg}).
\end{proof}


\section{Invertibility}\label{s:inertibility}

In this section, we will discuss the invertibility of $D^2f_{\varepsilon}(z_{\xi})$ on $(T_{z_{\xi}}Z^{\varepsilon})^{\perp_s}$.
Here $T_{z_{\xi}}Z^{\varepsilon}$ is the tangent space to $Z^{\varepsilon}$ at $z_{\xi}$, and $(T_{z_{\xi}}Z^{\varepsilon})^{\perp_s}$ is the orthogonal complement of $T_{z_{\xi}}Z^{\varepsilon}$ in $H^s(\mathbf R^n)$.

Let
\begin{equation*}
\mathcal L_{\varepsilon,\xi}:(T_{z_{\xi}}Z^{\varepsilon})^{\perp_s}\to (T_{z_{\xi}}Z^{\varepsilon})^{\perp_s}
\end{equation*}
be the tangent operator of $Df_{\varepsilon}$ restricted on $(T_{z_{\xi}}Z^{\varepsilon})^{\perp_s}$, that is, on $(T_{z_{\xi}}Z^{\varepsilon})^{\perp_s}$,
\begin{equation*}
\langle \mathcal L_{\varepsilon,\xi}v,w\rangle_s=D^2f_{\varepsilon}(z_{\xi})[v,w].
\end{equation*}

The main aim of this section is to prove the following result which implies that $\mathcal L_{\varepsilon,\xi}$ is invertible on $(T_{z_{\xi}}Z^{\varepsilon})^{\perp_s}$.
\begin{prop}\label{p:invertible}
 Given $\bar\rho>0$, there exists $\bar\varepsilon>0$ such that, for all $|\xi|\le\bar\rho$ and $0<\varepsilon< \bar\varepsilon$, it holds that
\begin{equation*}
|\langle \mathcal L_{\varepsilon,\xi}v,v\rangle_s|\ge C\|v\|_s^2, \quad \forall v\in (T_{z_{\xi}}Z^{\varepsilon})^{\perp_s},
\end{equation*}
where $C>0$ is a constant only depending on $\bar\xi$ and $\bar\varepsilon$.
\end{prop}

Note that
\begin{equation*}
T_{z_{\xi}}Z^{\varepsilon}={\rm span}\{\partial_{\xi_1}z_{\xi},\cdots,\partial_{\xi_n}z_{\xi}\}.
\end{equation*}
By Lemma \ref{l:pxe}, we know that $\partial_{\xi_i}z_{\xi}$ is close to $-\partial_{x_i}z_{\xi}$ in $H^s(\mathbf R^n)$ when $\varepsilon\to 0$ and $|\xi|\le\bar\rho$. For convenience, we define
\begin{equation}\label{e:mxv}
K_{\varepsilon,\xi}={\rm span}\{z_{\xi},\partial_{x_1}z_{\xi},\cdots,\partial_{x_n}z_{\xi}\}.
\end{equation}

To prove Proposition \ref{p:invertible}, we need some lemmas.
\begin{lemma}
$z_{\xi}$ is a critical point of $F^{\varepsilon \xi}$ with Morse index one.
\end{lemma}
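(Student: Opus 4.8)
The plan is to compute the second variation $D^2F^{\varepsilon\xi}(z_\xi)$ explicitly and reduce the Morse index count to the known spectral picture of the linearized operator $L_0$ at the standard solution $U$ from Theorem~\ref{t:unda} and Remark~\ref{r:lce}. First I would recall that $z_\xi = z^{\varepsilon\xi}(x-\xi) = b(\varepsilon\xi)U(a(\varepsilon\xi)(x-\xi))$ and that this is a critical point of $F^{\varepsilon\xi}$ by the construction in Section~\ref{sb:cuf}. Differentiating $F^{\varepsilon\xi}$ twice gives
\begin{equation*}
D^2F^{\varepsilon\xi}(z_\xi)[v,w] = \langle (-\Delta)^s v + (1+V(\varepsilon\xi))v - p|z_\xi|^{p-1}v,\, w\rangle_0,
\end{equation*}
so the associated operator is $\widetilde L_{\varepsilon\xi} := (-\Delta)^s + (1+V(\varepsilon\xi)) - p\,|z_\xi|^{p-1}$.

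Next I would remove the parameters by a rescaling. Writing $v(x) = \tilde v(a(\varepsilon\xi)(x-\xi))$ and using $a^{2s} = 1+V(\varepsilon\xi)$ together with the definition of $b$, a direct substitution shows that $\widetilde L_{\varepsilon\xi}$ is, up to a positive multiplicative constant and the change of variables, unitarily equivalent to the operator $L_0 = (-\Delta)^s + 1 - p\,U^{p-1}$, the linearization at $U$ of the standard equation \eqref{e:standard}. In particular $\widetilde L_{\varepsilon\xi}$ and $L_0$ have the same number of negative eigenvalues (counted with multiplicity) and the same kernel dimension. So it suffices to show $L_0$ has Morse index exactly one.

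Then I would invoke the spectral information already quoted: by Theorem~\ref{t:unda}, $\ker L_0 = \operatorname{span}\{\partial_{x_1}U,\dots,\partial_{x_n}U\}$, and by Remark~\ref{r:lce} there is a coercivity estimate $\langle L_0\phi,\phi\rangle_0 \ge C\|\phi\|_s^2$ for $\phi$ orthogonal to the $(n+1)$-dimensional space $K = \operatorname{span}\{\phi_{-1}, \partial_{x_1}U,\dots,\partial_{x_n}U\}$, where $\phi_{-1}$ is the linear ground state. The coercivity says $L_0$ has at most one negative direction; the fact that $U$ is a mountain-pass (least-energy) critical point of $f_0$ — equivalently, testing $L_0$ against $U$ itself gives $\langle L_0 U, U\rangle_0 = (1-p)\int U^{p+1} < 0$ — shows it has at least one. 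Hence the Morse index of $L_0$ is exactly one, and the same holds for $z_\xi$ as a critical point of $F^{\varepsilon\xi}$.

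The main obstacle I anticipate is making the rescaling step fully rigorous at the level of the quadratic forms on $H^s(\mathbf{R}^n)$: one must check that the dilation $v \mapsto v(a\,\cdot)$ is (after normalization) an isomorphism of $H^s$ that intertwines the two quadratic forms up to a positive scalar, so that negative eigenspaces and kernels correspond bijectively. This is routine for $(-\Delta)^s$ via the Fourier side but should be stated carefully; once it is in place, the rest is a direct appeal to Theorem~\ref{t:unda} and Remark~\ref{r:lce}. A minor point to handle is the distinction between the Morse index of the functional $F^{\varepsilon\xi}$ on all of $H^s$ versus on the Nehari-type constraint; here we simply use the unconstrained index, which is what the Lyapunov--Schmidt scheme in Section~\ref{s:reduction} requires.
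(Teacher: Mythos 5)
Your proof is correct and the central computation is the same one the paper uses: testing the second variation against the solution itself to exhibit a negative direction (the paper computes $D^2F^{\varepsilon\xi}(z_\xi)[z_\xi,z_\xi]=-(p-1)\int z_\xi^{p+1}<0$ directly, you compute $\langle L_0U,U\rangle_0=(1-p)\int U^{p+1}<0$ after rescaling --- these are equivalent). Where you genuinely diverge is in the ``exactly one'' half: the paper simply cites Section~3 of \cite{FS:URSFL} for the precise Morse index, while you supply a self-contained argument by (i) making explicit the conjugation $v\mapsto v(a(\cdot-\xi))$ that identifies $D^2F^{\varepsilon\xi}(z_\xi)$, up to the positive factor $a^{2s}$, with $L_0$, and (ii) running a min--max count: coercivity of $L_0$ on the $L^2$-orthogonal complement of the $(n+1)$-dimensional $K$ from Remark~\ref{r:lce} bounds the non-positive spectrum by $n+1$, and Theorem~\ref{t:unda} pins the kernel at exactly $n$, leaving at most one negative eigenvalue. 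That is a legitimate and arguably more transparent route than the bare citation, though note it still ultimately rests on the same source, since Remark~\ref{r:lce} (in particular the existence and simplicity of the ground state $\phi_{-1}$) is itself quoted from \cite{FS:URSFL}. One small thing to be precise about, which you flag yourself: the dilation must be checked to preserve the signature of the $H^s$ quadratic form, which is immediate on the Fourier side since $(-\Delta)^s$ scales homogeneously; with that in place the argument closes cleanly.
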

\begin{proof}
Since
\begin{equation}\label{e:D2F}
D^2F^{\varepsilon\xi}(z_{\xi})[z_{\xi},z_{\xi}]=-(p-1)\int_{\mathbf R^n}z_{\xi}^{p+1}dx<0,
\end{equation}
the operator $D^2F^{\varepsilon\xi}(z_{\xi})$ has at least one negative eigenvalue. For the details to prove that the Morse index of $z_{\xi}$ is one exactly,
see Section 3 in \cite{FS:URSFL}.
\end{proof}

\begin{lemma}\label{l:lzx}
Let $\bar\rho>0$, there exist $\varepsilon_0>0$ and a constant $C_1>0$ such that, for all $0<\varepsilon<\varepsilon_0$ and all $|\xi|\le\bar\rho$, it holds
\begin{equation*}
\langle \mathcal L_{\varepsilon,\xi}z_{\xi},z_{\xi}\rangle_s\le -C_1<0.
\end{equation*}
\end{lemma}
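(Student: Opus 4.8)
The plan is to use the decomposition already employed in the proof of Lemma~\ref{l:dgss}, namely $f_\varepsilon(u)=F^{\varepsilon\xi}(u)+\tfrac12\int_{\mathbf R^n}\big(V(\varepsilon x)-V(\varepsilon\xi)\big)u^2\,dx$, which at the level of second differentials reads
\begin{equation*}
D^2f_\varepsilon(z_\xi)[v,w]=D^2F^{\varepsilon\xi}(z_\xi)[v,w]+\int_{\mathbf R^n}\big(V(\varepsilon x)-V(\varepsilon\xi)\big)vw\,dx .
\end{equation*}
Testing with $v=w=z_\xi$ and using \eqref{e:D2F} gives
\begin{equation*}
D^2f_\varepsilon(z_\xi)[z_\xi,z_\xi]=-(p-1)\int_{\mathbf R^n}z_\xi^{p+1}\,dx+\int_{\mathbf R^n}\big(V(\varepsilon x)-V(\varepsilon\xi)\big)z_\xi^2\,dx ,
\end{equation*}
so it suffices to prove a uniform positive lower bound for $\int_{\mathbf R^n}z_\xi^{p+1}\,dx$ and the smallness of the perturbation term.

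For the first point, I would use $z_\xi(x)=b(\varepsilon\xi)U\big(a(\varepsilon\xi)(x-\xi)\big)$ and the scaling identity $\int_{\mathbf R^n}z_\xi^{p+1}\,dx=b(\varepsilon\xi)^{p+1}a(\varepsilon\xi)^{-n}\int_{\mathbf R^n}U^{p+1}\,dx$, together with the bounds $1\le a(\varepsilon\xi)\le C$ and $1\le b(\varepsilon\xi)\le C$ for $|\xi|\le\bar\rho$ (recorded in the proof of Lemma~\ref{l:pxe}, and coming from \eqref{d:a}--\eqref{d:b} and the boundedness of $V$), to get $\int_{\mathbf R^n}z_\xi^{p+1}\,dx\ge c_0>0$ with $c_0$ independent of $\varepsilon,\xi$; the same bounds give $\|z_\xi\|_0\le C$ uniformly. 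For the perturbation term, Cauchy--Schwarz together with Lemma~\ref{l:vxz} yields
\begin{equation*}
\Big|\int_{\mathbf R^n}\big(V(\varepsilon x)-V(\varepsilon\xi)\big)z_\xi^2\,dx\Big|\le\|z_\xi\|_0\Big(\int_{\mathbf R^n}|V(\varepsilon x)-V(\varepsilon\xi)|^2z_\xi^2\,dx\Big)^{1/2}\le C\big(\varepsilon|\nabla V(\varepsilon\xi)|+\varepsilon^2\big)=O(\varepsilon),
\end{equation*}
uniformly for $|\xi|\le\bar\rho$. Hence $D^2f_\varepsilon(z_\xi)[z_\xi,z_\xi]\le-(p-1)c_0+O(\varepsilon)\le-\tfrac12(p-1)c_0<0$ once $\varepsilon<\varepsilon_0$ with $\varepsilon_0$ small, which is the asserted $C_1$.

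It remains to reconcile this with the fact that $\mathcal L_{\varepsilon,\xi}$ is defined on $(T_{z_\xi}Z^\varepsilon)^{\perp_s}$, whereas $z_\xi$ need not belong to that space; so $\langle\mathcal L_{\varepsilon,\xi}z_\xi,z_\xi\rangle_s$ is to be read with $z_\xi$ replaced by its orthogonal projection $P_\varepsilon z_\xi$ onto $(T_{z_\xi}Z^\varepsilon)^{\perp_s}$. Since $\langle z_\xi,\partial_{x_i}z_\xi\rangle_s=0$ (the $L^2$ part equals $\tfrac12\int_{\mathbf R^n}\partial_{x_i}(z_\xi^2)\,dx=0$, and the Gagliardo part vanishes because $[\,\cdot\,]_s$ is translation invariant, so $\tfrac{d}{dt}\big|_{t=0}[z_\xi(\cdot-te_i)]_s^2=0$), Lemma~\ref{l:pxe} gives $\langle z_\xi,\partial_{\xi_i}z_\xi\rangle_s=O(\varepsilon)$, whence $P_\varepsilon z_\xi=z_\xi+O(\varepsilon)$ in $H^s(\mathbf R^n)$. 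Because $D^2f_\varepsilon(z_\xi)$ is a bounded bilinear form on $H^s(\mathbf R^n)$ with norm bounded uniformly in $(\varepsilon,\xi)$ (the perturbation contributes at most $2\sup_{\mathbf R^n}|V|\,\|v\|_0\|w\|_0\le C\|v\|_s\|w\|_s$, and $D^2F^{\varepsilon\xi}(z_\xi)$ is controlled by $\|z_\xi\|_s$ and $\|z_\xi\|_{L^{p+1}}$, all uniformly bounded), passing from $z_\xi$ to $P_\varepsilon z_\xi$ alters the quadratic form by $O(\varepsilon)$, which is absorbed into the previous estimate. I expect the only point requiring genuine care to be the bookkeeping of uniformity of all constants in $\varepsilon$ and $\xi$ (via the uniform two-sided control of $a,b$ and of $\|z_\xi\|_s$); the projection correction is a routine perturbation and there is no essential difficulty.
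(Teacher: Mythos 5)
Your proof is correct and follows essentially the same strategy as the paper's: decompose $D^2f_\varepsilon(z_\xi)=D^2F^{\varepsilon\xi}(z_\xi)+\int(V(\varepsilon x)-V(\varepsilon\xi))(\cdot)(\cdot)$, use the scaling identity and the uniform two-sided control of $a,b$ to pin $D^2F^{\varepsilon\xi}(z_\xi)[z_\xi,z_\xi]$ strictly below a negative constant, and show the remainder is $O(\varepsilon)$ uniformly in $|\xi|\le\bar\rho$. The only substantive differences are cosmetic or mild improvements: the paper bounds the perturbation term by Taylor-expanding $V$ and integrating directly against the pointwise decay of $z_\xi$, whereas you reuse Lemma~\ref{l:vxz} via Cauchy--Schwarz (both work and give the same $O(\varepsilon)$); the paper invokes $V(0)=0$ to get $a,b\in[\tfrac12,2]$, while you observe the two-sided bounds $1\le a,b\le C$ hold outright from $V\ge0$ bounded, avoiding that normalization; and your final paragraph explicitly notes that $z_\xi$ need not lie in $(T_{z_\xi}Z^\varepsilon)^{\perp_s}$ and verifies that replacing $z_\xi$ by its projection changes the quadratic form only by $O(\varepsilon)$ --- a point the paper passes over silently but which your observation shows is indeed harmless.
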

\begin{proof}
A direct calculus yields
\begin{equation}\label{e:lf}
\langle \mathcal L_{\varepsilon,\xi}z_{\xi},z_{\xi}\rangle_s=D^2F^{\varepsilon\xi}(z_{\xi})[z_{\xi},z_{\xi}]+\int_{\mathbf R^n}(V(\varepsilon x)-V(\varepsilon\xi))z_{\xi}^2dx.
\end{equation}
By (\ref{e:D2F}) and (\ref{e:dz}),
\begin{eqnarray*}
D^2F^{\varepsilon\xi}(z_{\xi})[z_{\xi},z_{\xi}]&=&-(p-1)\int_{\mathbf R^n}z_{\xi}^{p+1}dx\\
&=&-(p-1)\int_{\mathbf R^n}|b(\varepsilon\xi)U(a(\varepsilon\xi )(x-\xi))|^{p+1}dx\\
&=&-(p-1)[b(\varepsilon \xi)]^{p+1}[a(\varepsilon \xi)]^{-n}\int_{\mathbf R^n}U^{p+1}(x)dx
\end{eqnarray*}
From the definition of $a, b$ (see (\ref{d:a}) (\ref{d:b})) and $V(0)=0$, we have that,
for any fixed $\bar\rho>0$, there exists $\varepsilon_1>0$ small enough such that when $|\xi|\le\bar\rho$ and $0<\varepsilon<\varepsilon_1$, it holds
\begin{equation}\label{e:abr}
a(\varepsilon\xi)\in [\frac{1}{2},2] \quad \mbox{and}\quad b(\varepsilon\xi)\in [\frac{1}{2},2].
\end{equation}
Since $U$ is the unique solution (up to translation),
\begin{equation*}
\int_{\mathbf R^n}U^{p+1}(x)dx
\end{equation*}
is a constant. Therefore there is a positive constant $C_0$ such that
\begin{equation}\label{e:DFC}
D^2F^{\varepsilon\xi}(z_{\xi})[z_{\xi},z_{\xi}]\le -C_0<0.
\end{equation}
From Lemma \ref{l:vxz}, the second term on right side of (\ref{e:lf}) satisfies
\begin{eqnarray*}
&&\left|\int_{\mathbf R^n}(V(\varepsilon x)-V(\varepsilon\xi))z_{\xi}^2dx\right|\\
&\le&\int_{\mathbf R^n}(\varepsilon|\nabla V(\varepsilon\xi)\cdot (x-\xi)|+\varepsilon^2|D^2V(\eta)|\cdot |x-\xi|^2)z_{\xi}^2dx\\
&=&\varepsilon\int_{\mathbf R^n}|\nabla V(\varepsilon\xi)\cdot (x-\xi)|z_{\xi}^2dx+\varepsilon^2\int_{\mathbf R^n}|D^2V(\eta)|\cdot |x-\xi|^2z_{\xi}^2dx.
\end{eqnarray*}
Here $\eta$ is some point in $\mathbf R^n$.
Since $V\in C^3_b(\mathbf R^n)$, we have that
\begin{equation*}
|\nabla V(\varepsilon\xi)\cdot (x-\xi)|\le C|x-\xi|,
\end{equation*}
and
\begin{equation*}
|D^2V(\eta)|\cdot |x-\xi|^2\le C|x-\xi|^2.
\end{equation*}
Then by the definition of $z_{\xi}$,
\begin{eqnarray*}
&&\int_{\mathbf R^n}|\nabla V(\varepsilon\xi)\cdot (x-\xi)|z_{\xi}^2dx\\
&\le&C\int_{\mathbf R^n}|x-\xi||b(\varepsilon\xi)U(a(\varepsilon\xi )(x-\xi))|^2dx\\
&\le& C [b(\varepsilon\xi)]^2[a(\varepsilon\xi )]^{-n-1}\int_{\mathbf R^n}\frac{|x-\xi|}{(1+|x-\xi|)^{2n+4s}}dx
\end{eqnarray*}
Taking $|\xi|\le\rho$ and $\varepsilon<\varepsilon_1$ as in (\ref{e:abr}), we obtain that there exists a positive constant $C_2$ such that
\begin{equation*}
\int_{\mathbf R^n}|\nabla V(\varepsilon\xi)\cdot (x-\xi)|z_{\xi}^2dx<C_2
\end{equation*}
A similar argument yields
\begin{equation*}
\int_{\mathbf R^n}|D^2V(\eta)|\cdot |x-\xi|^2z_{\xi}^2dx
\le C_3\int_{\mathbf R^n}\frac{|x-\xi|^2}{1+|x-\xi|^{2n+4s}}dx\le C_4.
\end{equation*}
Therefore, when $|\xi|<\rho$ and $\varepsilon<\varepsilon_1$,
\begin{equation}\label{e:vv}
\left|\int_{\mathbf R^n}(V(\varepsilon x)-V(\varepsilon\xi))z_{\xi}^2dx\right|\le C_2\varepsilon+C_3\varepsilon^2.
\end{equation}
Then there is a $\varepsilon_0<\varepsilon_1$ such that when $\varepsilon<\varepsilon_0$,
\begin{equation}\label{e:cvv}
C_2\varepsilon+C_3\varepsilon^2<\frac{C_0}{2}
\end{equation}
From (\ref{e:DFC}), (\ref{e:vv}), (\ref{e:cvv}), we have
\begin{equation*}
\langle \mathcal L_{\varepsilon,\xi}z_{\xi},z_{\xi}\rangle_s\le -\frac{C_0}{2}<0.
\end{equation*}
This complete the proof.
\end{proof}

\begin{lemma}\label{l:d2f0i}
Let $\bar\rho>0$. There exists $\varepsilon_2>0$ small such that, for all $0<\varepsilon<\varepsilon_2$ and $|\xi|\le\bar\rho$, it holds
\begin{equation*}
D^2f_0(z_{\xi})[\phi,\phi]\ge C_2\|\phi\|_s^2,\quad \mbox{ for }\phi\in K_{\varepsilon,\xi}^{\perp_s},
\end{equation*}
where $C_2$ is a positive constant only depending on $\varepsilon_2$ and $\bar\rho$.
\end{lemma}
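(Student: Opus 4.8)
The plan is to transfer the coercivity of the linearized operator $L_0$ at the standard solution $U$ (Remark \ref{r:lce}) to the operator $D^2f_0(z_\xi)$, via the scaling that relates $z_\xi$ to $U$, and then to control the gap between the subspace $K_{\varepsilon,\xi}$ and the ``model'' $(n+1)$-dimensional subspace $K$ appearing in Remark \ref{r:lce}. First I would write out $D^2f_0(z_\xi)[\phi,\phi]=\|\phi\|_s^2-p\int_{\mathbf R^n}z_\xi^{p-1}\phi^2\,dx$ and, recalling that $z_\xi(x)=b(\varepsilon\xi)U(a(\varepsilon\xi)(x-\xi))$ with $a(\varepsilon\xi),b(\varepsilon\xi)\in[\tfrac12,2]$ by \eqref{e:abr}, perform the change of variables $y=a(\varepsilon\xi)(x-\xi)$ and set $\psi(y)=\phi((y/a)+\xi)$. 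Under this rescaling, $D^2f_0(z_\xi)[\phi,\phi]$ becomes (up to the fixed positive powers of $a$ and $b$) a quadratic form of the type $\langle L_0^{(a)}\psi,\psi\rangle$, where $L_0^{(a)}$ is the linearization of the equation $a^{2s}(-\Delta)^s u+u-|u|^{p-1}u=0$; since $a$ is uniformly close to $1$, this operator is a small perturbation of $L_0$, and its coercivity on the orthogonal complement of $\tilde K:=\mathrm{span}\{\phi_{-1},\partial_{x_1}U,\dots,\partial_{x_n}U\}$ follows from Remark \ref{r:lce} together with a continuity-in-$a$ argument (for which one must note that the norms $\|\cdot\|_s$ and $(\|\cdot\|_0^2+a^{2s}[\cdot]_s^2)^{1/2}$ are uniformly equivalent for $a\in[\tfrac12,2]$).

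The remaining issue is that $\phi\perp_s K_{\varepsilon,\xi}$ does not literally translate into $\psi\perp\tilde K$: the span $K_{\varepsilon,\xi}=\mathrm{span}\{z_\xi,\partial_{x_1}z_\xi,\dots,\partial_{x_n}z_\xi\}$ pulls back to $\mathrm{span}\{U,\partial_{x_1}U,\dots,\partial_{x_n}U\}$, which contains the ground state direction $U$ but not $\phi_{-1}$, while on the other hand the coercivity of $L_0$ fails on the $U$-direction (indeed $\langle L_0 U,U\rangle<0$). The standard device here is to decompose $\psi=\alpha U+\alpha_{-1}\phi_{-1}+\psi'$ with $\psi'\perp\mathrm{span}\{U,\phi_{-1},\partial_{x_i}U\}$; the hypothesis $\psi\perp\{U,\partial_{x_i}U\}$ forces $\alpha=0$ modulo the (small) non-orthogonality between $U$ and $\phi_{-1}$, so $\alpha_{-1}$ can be absorbed into a positive contribution since $\langle L_0\phi_{-1},\phi_{-1}\rangle>0$ (the linear ground state eigenvalue is positive: this is exactly the Morse-index-one fact, cf.\ Section 3 of \cite{FS:URSFL}), and $\psi'$ gives the coercive piece. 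Carrying this out produces $\langle L_0^{(a)}\psi,\psi\rangle\ge c\|\psi\|_s^2$ for $\psi$ in the pull-back of $K_{\varepsilon,\xi}^{\perp_s}$, provided $a$ is close enough to $1$, i.e.\ provided $\varepsilon$ is small enough (this fixes $\varepsilon_2$).

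Finally I would undo the rescaling: the inequality for $\psi$ transfers back to $D^2f_0(z_\xi)[\phi,\phi]\ge C_2\|\phi\|_s^2$ because all the scaling factors $a^{-n}$, $b^{p-1}$, $a^{2s}$ are bounded above and below by constants depending only on $\bar\rho$ and $\varepsilon_2$, and the two Hilbert-space norms are uniformly equivalent along the family. The main obstacle is the bookkeeping in the orthogonality step just described: one must handle the discrepancy between the subspace one is allowed to quotient out ($K_{\varepsilon,\xi}$, containing $z_\xi$) and the subspace on which $L_0$ is genuinely coercive ($K$, containing $\phi_{-1}$ rather than $U$), and show that the $O(\varepsilon)$-sized misalignment of these subspaces, caused both by the rescaling $a\ne1$ and by $U\not\perp\phi_{-1}$, does not destroy the positive lower bound. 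Everything else — the change of variables, the norm equivalences, and the continuity of the quadratic form in $a$ — is routine once this point is set up carefully.
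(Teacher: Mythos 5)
Your approach is genuinely different from the paper's. You propose a direct rescaling-plus-decomposition argument, whereas the paper argues by contradiction: it extracts a weakly convergent sequence $\phi_j\rightharpoonup\phi_\infty$ from a putative bad family, shows $\phi_\infty\in K_{0,\bar\xi}^{\perp_s}$ and then $\phi_\infty=0$, and finally uses the uniform decay of $z_{\xi_j}^{p-1}$ together with local compactness of $H^s\hookrightarrow L^2$ on balls to pass to the limit in $\int z_{\xi_j}^{p-1}\phi_j^2\,dx\to 0$, which contradicts $\|\phi_j\|_s=1$ and $D^2f_0(z_{\xi_j})[\phi_j,\phi_j]\to 0$. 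You are right that in either route the essential point is the passage from coercivity on $\{\phi_{-1},\partial_{x_i}U\}^{\perp}$ (Remark \ref{r:lce}) to coercivity on $\{U,\partial_{x_i}U\}^{\perp}$; the paper handles this somewhat tersely by its ``suitable chosen subspace'' phrasing and a pointer to Section~3 of \cite{FS:URSFL}.

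However, your absorption step contains a sign error that breaks the argument as written. You assert that $\langle L_0\phi_{-1},\phi_{-1}\rangle_0>0$ and parenthetically justify it by ``the linear ground state eigenvalue is positive: this is exactly the Morse-index-one fact.'' This is backwards. Morse index one means $L_0$ has exactly one \emph{negative} eigenvalue, and $\phi_{-1}$ is precisely the eigenfunction attached to that negative eigenvalue, so $\langle L_0\phi_{-1},\phi_{-1}\rangle_0<0$: the $\phi_{-1}$-direction is the single bad direction, not a positive reservoir that absorbs error. Relatedly, you describe the non-orthogonality between $U$ and $\phi_{-1}$ as ``small,'' but it is not an $O(\varepsilon)$ misalignment: the strictly negative, $\varepsilon$-independent quantity $\langle L_0 U,U\rangle_0=-(p-1)\int U^{p+1}\,dx<0$ (the computation in~(\ref{e:D2F})) forces $U$ to have a substantial $\phi_{-1}$-component. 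The correct way to carry out your decomposition is the Weinstein-type lemma: since $L_0$ has exactly one negative eigenvalue, ${\rm ker}\,L_0={\rm span}\{\partial_{x_i}U\}$, $U\perp{\rm ker}\,L_0$, and $\langle L_0 U,U\rangle_0<0$, the operator $L_0$ is coercive on $\{U\}^\perp\cap({\rm ker}\,L_0)^\perp$. Concretely, decomposing $\psi=\beta\phi_{-1}+\psi_+$ with $\psi_+$ in the positive subspace, the constraint $\psi\perp U$ bounds $\beta$ in terms of $\psi_+$, and inserting this into $\langle L_0\psi,\psi\rangle_0=\lambda_{-1}\beta^2\|\phi_{-1}\|_0^2+\langle L_0\psi_+,\psi_+\rangle_0$ together with the strict inequality $\langle L_0U,U\rangle_0<0$ yields a positive lower bound; this is exactly what the reference you cite, \cite{FS:URSFL}, supplies. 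If you replace your incorrect sign claim with this lemma, the rest of your rescaling and continuity-in-$a$ scaffolding goes through (modulo one more bookkeeping point you should be explicit about: the orthogonality in $K_{\varepsilon,\xi}^{\perp_s}$ is in the $H^s$ inner product, while the coercivity statement is naturally in $L^2$, and the change of variables does not carry one orthogonality into the other exactly; this discrepancy is $O(|a-1|)$ and must be absorbed alongside the $a$-perturbation of the operator).
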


If this lemma does not hold, then there exists a sequence of $(\varepsilon_j,\xi_j)\to (0,\bar\xi)$ in $\mathbf R^+\times B_{\bar\rho}\subset\mathbf R^+\times \mathbf R^n$ and a sequence $\phi_j\in K_{\xi_j,\varepsilon_j}^{\perp_s}$
such that
\begin{equation}\label{e:pps1}
\|\phi_j\|_{s}=1,
\end{equation}
and
\begin{equation}\label{e:d2f0pj}
D^2f_0(z_{\xi_j})[\phi_j,\phi_j]\to 0,\quad\mbox{ as } j\to \infty.
\end{equation}
Since $\{\phi_j\}$ is bounded in $H^s(\mathbf R^n)$, we assume (passing to a subsequence) that $\phi_j$ converge weakly to a $\phi_{\infty}$ in $H^s(\mathbf R^n)$.
\begin{lemma}\label{l:pipk}
It holds that
\begin{equation*}
\phi_{\infty}\in K_{0,\bar\xi}^{\perp_s}.
\end{equation*}
\end{lemma}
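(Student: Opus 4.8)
The plan is to pass the orthogonality relations defining $K_{\varepsilon_j,\xi_j}^{\perp_s}$ to the weak limit. By (\ref{e:mxv}), the hypothesis $\phi_j\in K_{\xi_j,\varepsilon_j}^{\perp_s}$ says exactly that
\[
\langle\phi_j,z_{\xi_j}\rangle_s=0\qquad\text{and}\qquad\langle\phi_j,\partial_{x_i}z_{\xi_j}\rangle_s=0,\quad i=1,\dots,n .
\]
Since $\phi_j\rightharpoonup\phi_\infty$ in $H^s(\mathbf R^n)$, for every \emph{fixed} $w\in H^s(\mathbf R^n)$ one has $\langle\phi_j,w\rangle_s\to\langle\phi_\infty,w\rangle_s$. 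The obstruction is that the test elements $z_{\xi_j},\partial_{x_i}z_{\xi_j}$ also move with $j$, so the first and crucial step is to show that they converge \emph{strongly} in $H^s(\mathbf R^n)$ to the corresponding spanning vectors of $K_{0,\bar\xi}$, namely (recalling $V(0)=0$, whence $a(0)=b(0)=1$) $U(\cdot-\bar\xi)$ and $\partial_{x_i}U(\cdot-\bar\xi)$.

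For this, recall from a direct computation (cf. the proof of Lemma \ref{l:pxe}) that $z_{\xi_j}=b(\varepsilon_j\xi_j)U\big(a(\varepsilon_j\xi_j)(\cdot-\xi_j)\big)$ and $\partial_{x_i}z_{\xi_j}=a(\varepsilon_j\xi_j)b(\varepsilon_j\xi_j)\,(\partial_{x_i}U)\big(a(\varepsilon_j\xi_j)(\cdot-\xi_j)\big)$. Since $V\in C^3_b(\mathbf R^n)$, the scalars $a,b$ from (\ref{d:a}), (\ref{d:b}) depend continuously on their argument, and $(\varepsilon_j,\xi_j)\to(0,\bar\xi)$ forces $\varepsilon_j\xi_j\to0$, hence $a(\varepsilon_j\xi_j)\to1$ and $b(\varepsilon_j\xi_j)\to1$. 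I would then invoke the continuity of the dilation–translation action, i.e.\ that $(\lambda,y)\mapsto U(\lambda(\cdot-y))$ and $(\lambda,y)\mapsto(\partial_{x_i}U)(\lambda(\cdot-y))$ are continuous from $(0,\infty)\times\mathbf R^n$ into $H^s(\mathbf R^n)$; by Theorem \ref{t:unda} we have $U\in H^{2s+1}(\mathbf R^n)$, so $U\in H^s$ and $\partial_{x_i}U\in H^{2s}\subset H^s$, and the continuity is the standard one (approximate $U$ in $H^s$ by $C^\infty_c$ functions, for which the claim is elementary, using that the dilation operators are uniformly bounded on $H^s$ for $\lambda$ near $1$; alternatively argue on the Fourier side via dominated convergence). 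Combining, $z_{\xi_j}\to U(\cdot-\bar\xi)$ and $\partial_{x_i}z_{\xi_j}\to\partial_{x_i}U(\cdot-\bar\xi)$ strongly in $H^s(\mathbf R^n)$.

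With the strong convergence in hand, let $w_j$ denote any one of $z_{\xi_j},\partial_{x_1}z_{\xi_j},\dots,\partial_{x_n}z_{\xi_j}$ and $w_0$ its limit. Split
\[
0=\langle\phi_j,w_j\rangle_s=\langle\phi_j,w_0\rangle_s+\langle\phi_j,w_j-w_0\rangle_s .
\]
The first term tends to $\langle\phi_\infty,w_0\rangle_s$ by weak convergence, and the second satisfies $|\langle\phi_j,w_j-w_0\rangle_s|\le\|\phi_j\|_s\|w_j-w_0\|_s=\|w_j-w_0\|_s\to0$ by (\ref{e:pps1}). Hence $\langle\phi_\infty,w_0\rangle_s=0$ for each $w_0\in\{U(\cdot-\bar\xi),\partial_{x_1}U(\cdot-\bar\xi),\dots,\partial_{x_n}U(\cdot-\bar\xi)\}$, i.e.\ $\phi_\infty$ is $H^s$-orthogonal to $K_{0,\bar\xi}$, which is the assertion.

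The only genuinely delicate point is the $H^s$-continuity of the dilation action $\lambda\mapsto U(\lambda\,\cdot)$ at $\lambda=1$: translations are continuous in $H^s$ by the usual Fourier argument, but the dilation requires a change of variables together with a dominated-convergence (or density) argument, and this is exactly where the integrability afforded by the decay bounds of Theorem \ref{t:unda} enters. Everything else — weak convergence of inner products against fixed vectors, the Cauchy–Schwarz splitting, and the continuity of $a$ and $b$ — is routine.
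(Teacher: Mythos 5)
Your proposal is correct and takes essentially the same route as the paper: both decompose $\partial_{x_i}z_{\xi_j}$ (resp.\ $z_{\xi_j}$) as the fixed limit $\partial_{x_i}U(\cdot-\bar\xi)$ (resp.\ $U(\cdot-\bar\xi)$) plus an error tending to zero strongly in $H^s$, then pass the orthogonality relations to the weak limit via Cauchy--Schwarz and $\|\phi_j\|_s=1$. The only difference is that the paper asserts $\|\psi_j\|_s\to0$ with a terse appeal to the definition of $a,b$, whereas you spell out the underlying ingredients --- continuity of $a,b$, $H^s$-continuity of translations and of dilations near $\lambda=1$ --- which fills in a step the paper leaves implicit.
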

\begin{proof}
Rewrite
\begin{eqnarray*}
\partial_{x_i}z_{\xi}&=&\partial_{x_i}U(x-\bar\xi)+\partial_{x_i}[b(\varepsilon_j\xi_j)U(a(\varepsilon_j\xi_j )(x-\xi_j))-U(x-\bar\xi)]\\
&:=&\partial_{x_i}U(x-\bar\xi)+\psi_j.
\end{eqnarray*}
By the definition of $a(\xi)$ and $b(\xi)$ (see (\ref{d:a}) and (\ref{d:b})), it holds that
\begin{eqnarray*}
\|\psi_j\|_s\to 0,\quad \mbox{as }j\to\infty.
\end{eqnarray*}
From $\phi_j \in K_{\xi_j,\varepsilon_j}^{\perp_s}$, it holds that
\begin{eqnarray*}
0=\langle \phi_j,\partial_{x_i}z_{\xi}\rangle_s
= \langle \phi_j,\partial_{x_i}U(\cdot-\bar\xi)\rangle_s+\langle\phi_j,\psi_j\rangle_s\to \langle \phi_{\infty},\partial_{x_i}U(\cdot-\bar\xi)\rangle_s.
\end{eqnarray*}
That is, $\phi_{\infty}\perp \partial_{x_i}U(\cdot-\bar\xi)$. Similarly, we have that $\phi_{\infty}\perp U(\cdot-\bar\xi)$. Therefore, we obtain $\phi_{\infty}\in K_{0,\bar\varepsilon}^{\perp_s}$. This completes the proof.
\end{proof}

Let
\begin{equation*}
\mathcal L_j:H^s(\mathbf R^n)\to H^s(\mathbf R^n)
\end{equation*}
be the operator given by
\begin{equation*}
\langle \mathcal L_j \phi,\psi\rangle_s=D^2f_0(z_{\xi_j})[\phi,\psi],\quad \mbox{ for }\phi,\psi\in H^s(\mathbf R^n),
\end{equation*}
and let
\begin{equation*}
\mathcal L_{\infty}:H^s(\mathbf R^n)\to H^s(\mathbf R^n)
\end{equation*}
be the operator defined by
\begin{equation*}
\langle \mathcal L_{\infty} \phi,\psi\rangle_s=D^2f_0(U(\cdot-\bar\xi))[\phi,\psi],\quad \mbox{ for }\phi,\psi\in H^s(\mathbf R^n).
\end{equation*}

We now have the following lemma.
\begin{lemma}
We have that $\phi_{\infty}=0$.
\end{lemma}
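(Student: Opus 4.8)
The plan is to combine the weak convergence $\phi_j\rightharpoonup\phi_\infty$ with the vanishing of the quadratic form (\ref{e:d2f0pj}) and the non-degeneracy of $L_0$ to force $\phi_\infty=0$. First I would pass to the limit in the identity $\langle\mathcal L_j\phi_j,\psi\rangle_s=D^2f_0(z_{\xi_j})[\phi_j,\psi]$ for every fixed test function $\psi\in H^s(\mathbf R^n)$. The point is that $z_{\xi_j}=b(\varepsilon_j\xi_j)U(a(\varepsilon_j\xi_j)(\cdot-\xi_j))\to U(\cdot-\bar\xi)$ strongly in $H^s(\mathbf R^n)$ (indeed the map $(\varepsilon,\xi)\mapsto z_{\xi}$ is continuous, by the definition of $a,b$ and $V(0)=0$), and hence the zero-order nonlinear term $p\,z_{\xi_j}^{p-1}\phi_j\to p\,U(\cdot-\bar\xi)^{p-1}\phi_\infty$ in the appropriate dual sense. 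Here I would use the compact embedding $H^s(\Omega)\hookrightarrow L^q(\Omega)$ (Theorem \ref{l:compactness}, Theorem \ref{l:embedding}) on balls together with the decay of $U$ to handle the tail, so that $\phi_j\to\phi_\infty$ in $L^{p+1}_{\mathrm{loc}}$ and the potential-type term converges. Consequently $\mathcal L_j\phi_j\rightharpoonup\mathcal L_\infty\phi_\infty$ weakly in $H^s(\mathbf R^n)$.

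Next I would test (\ref{e:d2f0pj}) against $\phi_\infty$ itself: writing $D^2f_0(z_{\xi_j})[\phi_j,\phi_j]=\langle\phi_j,\phi_j\rangle_s-p\int z_{\xi_j}^{p-1}\phi_j^2$, the bilinear (Hilbert-space inner product) part is weakly lower semicontinuous and the nonlinear part passes to the limit by the local strong convergence above. Combined with $\langle\mathcal L_j\phi_j,\phi_\infty\rangle_s\to\langle\mathcal L_\infty\phi_\infty,\phi_\infty\rangle_s=D^2f_0(U(\cdot-\bar\xi))[\phi_\infty,\phi_\infty]$, one gets $D^2f_0(U(\cdot-\bar\xi))[\phi_\infty,\phi_\infty]\le 0$. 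On the other hand, by Lemma \ref{l:pipk} we have $\phi_\infty\in K_{0,\bar\xi}^{\perp_s}$, i.e. $\phi_\infty$ is orthogonal to $U(\cdot-\bar\xi)$ and to $\partial_{x_i}U(\cdot-\bar\xi)$ for all $i$. Since $\mathrm{ker}\,L_0=\mathrm{span}\{\partial_{x_1}U,\dots,\partial_{x_n}U\}$ (Theorem \ref{t:unda}) and $D^2f_0(U)=L_0$ has exactly one negative direction spanned essentially by $U$ (the Morse index one fact), the coercivity estimate of Remark \ref{r:lce} applies on the subspace orthogonal to $K=\mathrm{span}\{\phi_{-1},\partial_{x_1}U,\dots,\partial_{x_n}U\}$. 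A small linear-algebra argument (the orthogonality to $U$ controls the $\phi_{-1}$-component up to an $O(\|\phi_\infty\|\,\|\mathrm{proj}_{\mathrm{ker}}\phi_\infty\|)=0$ correction, since $\phi_\infty$ is also orthogonal to the kernel) shows $D^2f_0(U(\cdot-\bar\xi))[\phi_\infty,\phi_\infty]\ge C\|\phi_\infty\|_s^2$. Hence $\phi_\infty=0$.

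I expect the main obstacle to be making the limit passage in the nonlinear term rigorous in the non-compact setting of $\mathbf R^n$: $\phi_j\rightharpoonup\phi_\infty$ only gives local strong convergence, so I must use the fast decay $z_{\xi_j}^{p-1}\le C(1+|x-\xi_j|)^{-(n+2s)(p-1)}$ together with the uniform bound $\|\phi_j\|_s=1$ and Sobolev embedding to show $\int_{|x-\xi_j|>R}z_{\xi_j}^{p-1}\phi_j^2\to 0$ uniformly in $j$ as $R\to\infty$. A secondary subtlety is relating $D^2f_0(U)$ to the operator $L_0$ of Theorem \ref{t:unda} and justifying the coercivity of Remark \ref{r:lce} precisely on $\mathrm{span}\{U,\partial_{x_i}U\}^{\perp_s}$ rather than on $K^{\perp_s}$; this is the standard observation that the negative eigenspace is one-dimensional and its eigenfunction $\phi_{-1}$ is not orthogonal to $U$, so orthogonality to $U$ suffices to kill the negative part. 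Everything else is routine.
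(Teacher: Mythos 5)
Your proposal is correct and follows essentially the same route as the paper: combine $\|\phi_j\|_s=1$ with $D^2f_0(z_{\xi_j})[\phi_j,\phi_j]\to 0$, pass the nonlinear term to the limit via compact embedding on balls plus the polynomial decay of $U$, use weak lower semicontinuity of the $H^s$ norm to get $\langle\mathcal L_\infty\phi_\infty,\phi_\infty\rangle_s\le 0$, and then invoke coercivity on $K_{0,\bar\xi}^{\perp_s}$ to force $\phi_\infty=0$. Worth noting: you correctly flag the subtlety (which the paper leaves implicit) that Remark \ref{r:lce} gives coercivity on $K^{\perp_s}$ with $K=\mathrm{span}\{\phi_{-1},\partial_{x_i}U\}$ rather than on $\mathrm{span}\{U,\partial_{x_i}U\}^{\perp_s}$, and that the justification is that the negative eigenspace is one-dimensional with $\langle\phi_{-1},U\rangle\ne 0$.
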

\begin{proof}
By (\ref{e:pps1}) and (\ref{e:d2f0pj}), we get that
\begin{equation*}
\langle \mathcal L_{j}\phi_j,\phi_j\rangle_s=\|\phi_j\|_s^2-p\int_{\mathbf R^n}z_{\xi_j}^{p-1}\phi_j^2dx\to 0.
\end{equation*}
and then
\begin{equation*}
p\int_{\mathbf R^n}z_{\xi_j}^{p-1}\phi_j^2dx\to 1.
\end{equation*}
Hence, from the definition of $z_{\xi}$ (see Section \ref{sb:cuf}), we obtain that
\begin{equation}\label{e:upi}
p\int_{\mathbf R^n}U^{p-1}(x-\bar\xi)\phi_j^2dx\to 1.
\end{equation}
Moreover, estimate
\begin{eqnarray*}
&&\left|\int_{\mathbf R^n}U^{p-1}(x-\bar\xi)(\phi_j^2-\phi_{\infty}^2)dx\right|\\
&\le& \left(\int_{\mathbf R^n}U^{2(p-1)}(x-\bar\xi)|\phi_j(x)-\phi_{\infty}(x)|^2dx\right)^{\frac{1}{2}}\|\phi_j+\phi_{\infty}\|_0\notag\\
&\le&C\left(\int_{\mathbf R^n}U^{2(p-1)}(x-\bar\xi)|\phi_j(x)-\phi_{\infty}(x)|^2dx\right)^{\frac{1}{2}}.\notag
\end{eqnarray*}
Let $B_r(\bar\xi)$ be the ball centered at $\bar\xi$ with radius $r$. Then
\begin{eqnarray}\label{e:bpjpi}
&&\int_{\mathbf R^n}U^{2(p-1)}(x-\bar\xi)|\phi_j(x)-\phi_{\infty}(x)|^2dx\notag\\
&=&\left(\int_{B_r(\bar\xi)}+\int_{\mathbf R^n\setminus B_r(\bar\xi)}\right)U^{2(p-1)}(x-\bar\xi)|\phi_j(x)-\phi_{\infty}(x)|^2dx.
\end{eqnarray}
For all sufficiently small $\epsilon>0$, there exists an $r(\epsilon)$ such that if $r>r(\epsilon)$, then, $U^{2(p-1)}(x-\bar\xi)<\epsilon$, for all $x\in \mathbf R^n\setminus B_r(\bar\xi)$. Thus,
\begin{equation*}
\left|\int_{\mathbf R^n\setminus B_r(\bar\xi)}U^{2(p-1)}(x-\bar\xi)|\phi_j(x)-\phi_{\infty}(x)|^2dx\right|\le \epsilon\|\phi_j(x)-\phi_{\infty}(x)\|_0^2.
\end{equation*}
We now estimate the other term in (\ref{e:bpjpi}). Let $\chi$ be a smooth function satisfying
\begin{equation*}
\chi(x)=\left\{\begin{array}{ll}
            1, & \mbox{for }x\in B_r(\bar\xi), \\
            0, & \mbox{for } x\in \mathbf R^n\setminus B_{r+1}(\bar\xi).
          \end{array}\right.
\end{equation*}
Then $\{\chi\phi_j\}$ is a bounded sequence in $H^{s}(B_{r+1}(\bar\xi))$. Therefore, there exists a function $\eta\in H^{s}(B_{r+1}(\bar\xi))$ such that, up to a subsequence, $\chi\phi_j\rightharpoonup \eta$. Since the embedding $H^{s}(B_{r+1}(\bar\xi))\hookrightarrow L^2(B_{r+1}(\bar\xi))$ is compact, we have $\chi\phi_j\to \eta$ in $L^2(B_{r+1}(\bar\xi))$. Then
$$\phi_j|_{B_r(\bar\xi)}=\chi\phi_j|_{B_r(\bar\xi)}\to \eta|_{B_r(\bar\xi)}, \quad \mbox{ in }L^2(B_r(\bar\xi)).$$
Since $\phi_j\rightharpoonup \phi_{\infty}$ in $L^2(B_r(\bar\xi))$, we obtain that
\begin{equation}\label{e:slpi}
\phi_j\to \phi_{\infty} \quad \mbox{in } L^2(B_r(\bar\xi)).
\end{equation}
It follows that
\begin{equation*}
\left|\int_{B_r(\bar\xi)}U^{2(p-1)}(x-\bar\xi)|\phi_j(x)-\phi_{\infty}(x)|^2dx\right|\to 0,\quad \mbox{as }j\to \infty.
\end{equation*}
By the arbitrary of $\epsilon$, we have that
\begin{equation*}
\int_{\mathbf R^n}U^{2(p-1)}(x-\bar\xi)|\phi_j(x)-\phi_{\infty}(x)|^2dx\to 0.
\end{equation*}
This yields that
\begin{equation}\label{e:upjpi0}
\int_{\mathbf R^n}U^{p-1}(x-\bar\xi)(\phi_j^2-\phi_{\infty}^2)dx\to 0.
\end{equation}
From (\ref{e:upi}) and (\ref{e:upjpi0}), we get that
\begin{equation*}
p\int_{\mathbf R^n}U^{p-1}(x-\bar\xi)\phi_{\infty}^2dx= 1.
\end{equation*}
On the other hand, by $\phi_j\rightharpoonup\phi_{\infty}$ in $H^s(\mathbf R^n)$, we have that
\begin{equation*}
\langle \phi_{\infty},\phi_{\infty}\rangle_s\leftarrow \langle \phi_j,\phi_{\infty}\rangle_s \le \|\phi_j\|_s\|\phi_{\infty}\|_s=\|\phi_{\infty}\|_s.
\end{equation*}
It follows that
\begin{equation*}
\|\phi_{\infty}\|_s\le 1.
\end{equation*}
Therefore, we obtain that
\begin{equation*}
\langle \mathcal L_{\infty}\phi_{\infty},\phi_{\infty}\rangle_s=\|\phi_{\infty}\|_s^2-p\int_{\mathbf R^n}U^{p-1}(x-\bar\xi)\phi_{\infty}^2dx\le  0.
\end{equation*}
By Theorem \ref{t:unda}, Remark \ref{r:lce} and Lemma \ref{l:pipk}, it holds that
\begin{equation*}
\langle \mathcal L_{\infty}\phi_{\infty},\phi_{\infty}\rangle_s\ge C\|\phi_{\infty}\|_s^2,
\end{equation*}
where $C$ is a positive constant.
Then we have that
\begin{equation*}
\|\phi_{\infty}\|_s=0.
\end{equation*}
This completes the proof.
\end{proof}

\begin{proof}[Proof of Lemma \ref{l:d2f0i}]
Note that $z_{\xi_j}^{p-1}$ decays uniformly to $0$ at infinity as $0<\varepsilon_j<\bar\varepsilon$ and $|\xi|\le \bar\rho$. Then, for any $\epsilon>0$, there exists a sufficiently large $r_0>0$ such that, for all $r>r_0$, $|z_{\xi_j}^{p-1}(x)|<\epsilon$ when $x\in \mathbf R^n\setminus B_r$. Therefore, from (\ref{e:slpi}) and $\phi_{\infty}=0$, we have that
\begin{eqnarray*}
\left|\int_{\mathbf R^n}z_{\xi_j}^{p-1}|\phi_j|^2dx\right|
\le C\int_{B_r}|\phi_j|^2dx+\epsilon\int_{\mathbf R^n\setminus B_r}|\phi_j|^2dx
\to\epsilon, \quad \mbox{as }j\to \infty.
\end{eqnarray*}
By the arbitrary of $\epsilon$, we have that
\begin{equation*}
\left|\int_{\mathbf R^n}z_{\xi_j}^{p-1}|\phi_j|^2dx\right|\to 0, \quad \mbox{as }j\to \infty.
\end{equation*}
Moreover, from (\ref{e:pps1}) and (\ref{e:d2f0pj}), it holds that
\begin{equation*}
0\leftarrow D^2f_0(z_{\xi_j})[\phi_j,\phi_j]=\|\phi_j\|_s^2-p\int_{\mathbf R^n}z_{\xi_j}^{p-1}|\phi_j|^2dx\to 1.
\end{equation*}
It is a contradiction. Thus we have Lemma \ref{l:d2f0i}.
\end{proof}

\begin{lemma}\label{l:df2kp}
Let $\bar\rho>0$. There exists $\varepsilon_3>0$ small such that for all $0<\varepsilon<\varepsilon_3$ and $|\xi|\le\bar\rho$, it holds
\begin{equation*}
D^2f_{\varepsilon}(z_{\xi})[\phi,\phi]\ge C_3\|\phi\|_s^2,\quad \mbox{ for }\phi\in K_{\varepsilon,\xi}^{\perp_s},
\end{equation*}
where $C_3$ is a positive constant only depending on $\varepsilon_2$ and $\bar\rho$.
\end{lemma}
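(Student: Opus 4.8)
The plan is to exploit the additive structure of $f_\varepsilon$. Recall that $f_\varepsilon(u)=f_0(u)+\tfrac12\int_{\mathbf R^n}V(\varepsilon x)u^2\,dx$, so for every $\phi\in H^s(\mathbf R^n)$ one has the exact identity
\begin{equation*}
D^2f_\varepsilon(z_{\xi})[\phi,\phi]=D^2f_0(z_{\xi})[\phi,\phi]+\int_{\mathbf R^n}V(\varepsilon x)\phi^2\,dx.
\end{equation*}
Since $V\ge 0$ on $\mathbf R^n$ by hypothesis, the last integral is non-negative, hence $D^2f_\varepsilon(z_{\xi})[\phi,\phi]\ge D^2f_0(z_{\xi})[\phi,\phi]$ for every $\phi$, with no restriction on $\varepsilon$ or $\xi$.

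Next I would restrict $\phi$ to $K_{\varepsilon,\xi}^{\perp_s}$, which is the very space (\ref{e:mxv}) that appears in Lemma \ref{l:d2f0i}, and apply that lemma: for $0<\varepsilon<\varepsilon_2$ and $|\xi|\le\bar\rho$ it gives $D^2f_0(z_{\xi})[\phi,\phi]\ge C_2\|\phi\|_s^2$. Combining the two inequalities yields $D^2f_\varepsilon(z_{\xi})[\phi,\phi]\ge C_2\|\phi\|_s^2$ on $K_{\varepsilon,\xi}^{\perp_s}$, so the statement follows with the choice $\varepsilon_3=\varepsilon_2$ and $C_3=C_2$.

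There is essentially no obstacle here: the analytic content — the concentration–compactness argument, the use of the non-degeneracy of $L_0$ and of the coercivity estimate of Remark \ref{r:lce}, and the control of $z_{\xi}$ against $U(\cdot-\bar\xi)$ — has already been carried out in Lemma \ref{l:d2f0i}. The only point worth emphasizing is that the sign condition $V\ge 0$ (equivalently, as observed in the remarks following Theorem \ref{t:main}, the weaker $\inf(1+V)>0$ together with a minor adjustment of constants) is precisely what makes the potential term harmless; were $V$ allowed to change sign, one would instead have to absorb $\int_{\mathbf R^n}V(\varepsilon x)\phi^2\,dx$ into the coercivity constant, using either smallness of $\|V\|_{L^\infty}$ or a further compactness argument localized near the concentration point.
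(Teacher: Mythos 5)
Your proof is correct and is essentially identical to the paper's: it uses the decomposition $D^2f_\varepsilon(z_\xi)[\phi,\phi]=D^2f_0(z_\xi)[\phi,\phi]+\int V(\varepsilon x)\phi^2\,dx$, discards the non-negative potential term, and invokes Lemma \ref{l:d2f0i}, taking $\varepsilon_3=\varepsilon_2$ and $C_3=C_2$. The extra remark about absorbing a sign-changing $V$ is a reasonable aside but not part of the argument needed here.
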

\begin{proof}
By the nonnegativity of $V$ and Lemma \ref{l:d2f0i}, we have that, for all $\phi\in K_{\varepsilon,\xi}^{\perp}$
\begin{eqnarray*}
D^2f_{\varepsilon}(z_{\xi})[\phi,\phi]&=&D^2f_{0}(z_{\xi})[\phi,\phi]+\int_{\mathbf R^n}V(\varepsilon x)\phi^sdx\\
&\ge&D^2f_{0}(z_{\xi})[\phi,\phi]\ge C_0\|\phi\|_s^2.
\end{eqnarray*}
Here $0<\varepsilon<\varepsilon_2$ and $|\xi|\le\bar\rho$. Letting $\varepsilon_3=\varepsilon_2$ and $C_3=C_2$, we obtain the result.
\end{proof}

\begin{proof}[Proof of Proposition \ref{p:invertible}]
Let $\bar\varepsilon=\varepsilon_2$. From Lemma \ref{l:lzx}, Lemma \ref{l:df2kp}, Lemma \ref{l:pxe} and (\ref{e:mxv}), we have that, for all $|\xi|\le\bar\rho$ and $0<\varepsilon<\bar\varepsilon$,
\begin{equation*}
|\langle \mathcal L_{\varepsilon,\xi}v,v\rangle_s|\ge C\|v\|_s^2, \quad \forall v\in (T_{z_{\xi}}Z^{\varepsilon})^{\perp_s},
\end{equation*}
where $C>0$ is a constant only depending on $\bar\xi$ and $\bar\varepsilon$. This completes the proof.
\end{proof}


\section{Lyapunov-Schmidt Reduction}\label{s:reduction}

In this section, we will prove that the existence of critical points of $f_{\varepsilon}$ can be reduced to find critical points of an auxiliary finite dimensional functional.

\subsection{Auxiliary finite dimensional functional}

Let $P_{\varepsilon,\xi}$ be the orthogonal projection onto $(T_{z_{\xi}}Z^{\varepsilon})^{\perp_s}$. Our aim is to find a point $w\in (T_{z_{\xi}}Z^{\varepsilon})^{\perp_s}$ satisfying
\begin{equation}\label{e:pdg0}
P_{\varepsilon,\xi}Df_{\varepsilon}(z_{\xi}+w)=0.
\end{equation}
By expansion, we have that
\begin{equation*}
Df_{\varepsilon}(z_{\xi}+w)=Df_{\varepsilon}(z_{\xi})+D^2f_{\varepsilon}(z_{\xi})[w]+\mathcal R(z_{\xi},w).
\end{equation*}
Here the map $\mathcal R(z_{\xi},w)$ is given by
\begin{equation*}
\begin{array}{ccccl}
  \mathcal R(z_{\xi},w) & : & H^s & \to & \mathbf R \\
  \, & \, & v & \to & \int_{\mathbf R^n}R(z_{\xi},w)vdx,
\end{array}
\end{equation*}
where
\begin{equation*}
R(z_{\xi},w)=-(|z_{\xi}+w|^{p-1}(z_{\xi}+w)-|z_{\xi}|^{p-1}z_{\xi}-p|z_{\xi}|^{p-1}w).
\end{equation*}
\begin{lemma}\label{l:rw1w2}
For all $w_1,w_2\in B_1\subset H^{s}(\mathbf R^n)$, it holds that
\begin{equation*}
\|\mathcal R(z_{\xi},w_2)-\mathcal R(z_{\xi},w_1)\|_s\le C\max\{\|w_1\|_{s}^{\sigma},\|w_2\|_{s}^{\sigma}\}\|w_2-w_1\|_{s}.
\end{equation*}
where $\sigma=\min\{1,p-1\}$, $C$ is a constant independent on $w_1, w_2$. Here $B_1$ is the unit ball in $H^{s}(\mathbf R^n)$.
\end{lemma}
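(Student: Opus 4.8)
The plan is to estimate the $H^s$-norm of the nonlinear remainder $\mathcal R(z_\xi,w_2)-\mathcal R(z_\xi,w_1)$ by testing against an arbitrary $v\in H^s(\mathbf R^n)$ and using the duality characterization $\|\mathcal R(z_\xi,w_2)-\mathcal R(z_\xi,w_1)\|_s=\sup_{\|v\|_s\le 1}\big|\int_{\mathbf R^n}(R(z_\xi,w_2)-R(z_\xi,w_1))v\,dx\big|$. So first I would write, with $g(t)=|t|^{p-1}t$,
\begin{equation*}
R(z_\xi,w_2)-R(z_\xi,w_1)=-\big(g(z_\xi+w_2)-g(z_\xi+w_1)-p|z_\xi|^{p-1}(w_2-w_1)\big),
\end{equation*}
and rewrite this as $-\int_0^1\big(g'(z_\xi+w_1+t(w_2-w_1))-g'(z_\xi)\big)(w_2-w_1)\,dt$, using $g'(t)=p|t|^{p-1}$. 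The key pointwise inequality is the elementary Hölder-type bound $\big||a+h|^{p-1}-|a|^{p-1}\big|\le C\min\{|h|^{p-1},\,|h|\,(|a|+|h|)^{p-2}\}$; combining the two branches gives, for any $a,h$, the clean estimate $\big||a+h|^{p-1}-|a|^{p-1}\big|\le C|h|^\sigma(|a|+|h|)^{p-1-\sigma}$ with $\sigma=\min\{1,p-1\}$. Applying this with $a=z_\xi$, $h=w_1+t(w_2-w_1)$ yields pointwise
\begin{equation*}
|R(z_\xi,w_2)-R(z_\xi,w_1)|\le C\big(|w_1|+|w_2|\big)^\sigma\big(|z_\xi|+|w_1|+|w_2|\big)^{p-1-\sigma}|w_2-w_1|.
\end{equation*}

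Next I would integrate against $v$ and apply Hölder's inequality with exponents chosen so that every factor lands in a Lebesgue space controlled by $H^s$ via Theorem \ref{l:embedding}. Writing $2^*=2n/(n-2s)$ (subcritical Sobolev exponent; in the case $n\le 2s$ any exponent is available), the product $(\,\cdot\,)^\sigma(\,\cdot\,)^{p-1-\sigma}|w_2-w_1|\,|v|$ has total homogeneity $\sigma+(p-1-\sigma)+1+1=p+1$ in the $w$'s, $z_\xi$, $v$; since $p+1<2^*+\text{(one spare factor)}$—more precisely since $p<\frac{n+2s}{n-2s}$ ensures $p-1$ factors fit strictly below the critical threshold—one can distribute the exponents so that $w_1,w_2,w_2-w_1,v\in L^{2^*}$ and the $z_\xi$ factor, together with any slack, is absorbed using $z_\xi\in L^r$ for all $r\ge 2$ (from Theorem \ref{t:unda}, $z_\xi$ decays like $|x|^{-(n+2s)}$). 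This produces
\begin{equation*}
\Big|\int_{\mathbf R^n}(R(z_\xi,w_2)-R(z_\xi,w_1))v\,dx\Big|\le C\big(\|w_1\|_{2^*}^\sigma+\|w_2\|_{2^*}^\sigma\big)\|w_2-w_1\|_{2^*}\|v\|_{2^*},
\end{equation*}
and then the continuous embeddings $H^s\hookrightarrow L^{2^*}$, $\|w_i\|_{2^*}\le C\|w_i\|_s$ etc., give the claimed bound after taking the supremum over $\|v\|_s\le 1$; the restriction $w_1,w_2\in B_1$ lets us replace $\|w_i\|_s^\sigma$-type lower-order terms by $\max\{\|w_1\|_s^\sigma,\|w_2\|_s^\sigma\}$ cleanly.

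The main obstacle is purely bookkeeping: choosing the Hölder exponents for the five-factor product so that each of $w_1,w_2,w_2-w_1,v$ sits at or below $2^*$ while the remaining exponent on $z_\xi$ stays finite, and verifying this is possible exactly under the subcriticality hypothesis $1<p<\frac{n+2s}{n-2s}$ (with the $n\le 2s$ case being easier since all exponents are admissible). One must also be a little careful about the two regimes $p-1\le 1$ versus $p-1>1$, since $\sigma$ and the pointwise inequality behave differently; handling both uniformly via the single bound $|h|^\sigma(|a|+|h|)^{p-1-\sigma}$ is the cleanest route and avoids case analysis later. No local structure of $(-\Delta)^s$ is needed here—everything reduces to the Sobolev embedding and the decay of $z_\xi$.
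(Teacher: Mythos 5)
Your proposal is correct and follows essentially the same strategy as the paper: test $\mathcal R(z_\xi,w_2)-\mathcal R(z_\xi,w_1)$ against $v\in H^s$, apply the mean value theorem to pull out a factor $(w_2-w_1)$, bound $\bigl||z_\xi+h|^{p-1}-|z_\xi|^{p-1}\bigr|$ pointwise (with the two regimes $p\le 2$ and $p>2$), then conclude by H\"older and the Sobolev embedding. The one place where the paper is cleaner is the choice of Lebesgue exponent: it measures every factor in $L^{p+1}$, so the integrand has total degree exactly $p+1$ and the H\"older exponents $\bigl(\tfrac{p+1}{\sigma},\tfrac{p+1}{p-1-\sigma},p+1,p+1\bigr)$ sum to one automatically; this dissolves what you flag as ``the main obstacle'' of distributing exponents among $L^{2^*}$ factors with a spare slot for $z_\xi$. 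Since $p+1<2n/(n-2s)$ is precisely the subcriticality hypothesis, $H^s\hookrightarrow L^{p+1}$ is available and your conclusion follows with less bookkeeping.
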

\begin{proof}
For all $v\in H^s(\mathbf R^n)$,
\begin{eqnarray*}
&&\left|[\mathcal R(z_{\xi},w_2)-\mathcal R(z_{\xi},w_1)](v)\right|\\
&\le&\int_{\mathbf R^n}\left||z_{\xi}+w_2|^{p-1}(z_{\xi}+w_2)-|z_{\xi}+w_1|^{p-1}(z_{\xi}+w_1)-p|z_{\xi}|^{p-1}(w_2-w_1)\right|\,|v|dx\notag\\
&\le&p\int_{\mathbf R^n}\left||z_{\xi}+w_1+\theta_1(w_2-w_1)|^{p-1}-|z_{\xi}|^{p-1})\right||w_2-w_1|\,|v|dx.\notag
\end{eqnarray*}
Here $\theta_1\in[0,1]$.
For $1<p\le 2$,
\begin{eqnarray*}
\left|[\mathcal R(z_{\xi},w_2)-\mathcal R(z_{\xi},w_1)](v)\right|&\le& p\int_{\mathbf R^n}|w_1+\theta_1(w_2-w_1)|^{p-1}|w_2-w_1|\,|v|dx\\
&\le& C\int_{\mathbf R^n}(|w_1|+|w_2|)^{p-1}|w_2-w_1|\,|v|dx\\
&\le& C (\|w_1\|_{L^{p+1}}^{p-1}+\|w_2\|_{L^{p+1}}^{p-1})\|w_2-w_1\|_{L^{p+1}}\|v\|_{L^{p+1}}.
\end{eqnarray*}
By Sobolev imbedding (Theorem \ref{l:embedding}), we have that
\begin{equation*}
H^{s}(\mathbf R^n)\hookrightarrow L^{p+1}(\mathbf R^n).
\end{equation*}
Therefore, we obtain that
\begin{equation*}
\left|[\mathcal R(z_{\xi},w_2)-\mathcal R(z_{\xi},w_1)](v)\right|\le C (\|w_1\|_{s}^{p-1}+\|w_2\|_{s}^{p-1})\|w_2-w_1\|_{s}\|v\|_{s}.
\end{equation*}
For $2<p<\frac{n+2s}{n-2s}$ (if $2<\frac{n+2s}{n-2s}$), it holds that
\begin{eqnarray*}
&&\left|[\mathcal R(z_{\xi},w_2)-\mathcal R(z_{\xi},w_1)](v)\right|\\
&=&C\int_{\mathbf R^n}|z_{\xi}+\theta_2(w_1+\theta_1(w_2-w_1))|^{p-2}|w_2-w_1|^2|v|dx\\
&\le&C\|z_{\xi}+\theta_2(w_1+\theta_1(w_2-w_1))\|_{L^{p+1}}^{p-2}\|w_2-w_1\|_{L^{p+1}}^2\|v\|_{L^{p+1}},
\end{eqnarray*}
where $\theta_2\in[0,1]$.
Similarly, by Sobolev imbedding, we have that
\begin{equation*}
\left|[\mathcal R(z_{\xi},w_2)-\mathcal R(z_{\xi},w_1](v)\right|\le C(\|z_{\xi}\|_{s}+\|w_1\|_{s}+\|w_2\|_{s})^{p-2} \|w_2-w_1\|_{s}^2\|v\|_s.
\end{equation*}
Therefore, we have
\begin{equation*}
\|\mathcal R(z_{\xi},w_2)-\mathcal R(z_{\xi},w_1)\|_{s}\le C\max(\|w_1\|_{s}^{\sigma},\|w_1\|_{s}^{\sigma})\|w_2-w_1\|_{s},
\end{equation*}
where $\sigma=\min\{1,p-1\}$. This completes the proof.
\end{proof}

\begin{cor}\label{l:Row}
It holds that $\|\mathcal R(z_{\xi},w)\|_s= O(\|w\|_{s}^{1+\sigma})$ where $\sigma=\min\{1,p-1\}$.
\end{cor}

\begin{proof}
Choosing $w_1=0$ and $w_2=w$ in Lemma \ref{l:rw1w2}, we find that
\begin{eqnarray*}
\|\mathcal R(z_{\xi},w)\|_s&\le& C(\|w\|_{s}^{1+\sigma}).
\end{eqnarray*}
\end{proof}

From the definition of $\mathcal L_{\varepsilon,\xi}$, Equation (\ref{e:pdg0}) becomes
\begin{equation}\label{e:lpr}
\mathcal L_{\varepsilon,\xi}w+P_{\varepsilon,\xi}Df_{\varepsilon}(z_{\xi})+P_{\varepsilon,\xi}\mathcal R(z_{\xi},w)=0,\quad \mbox{for }w\in (T_{z_{\xi}}Z)^{\perp_s}.
\end{equation}
By Proposition \ref{p:invertible}, we known that $\mathcal L_{\varepsilon,\xi}$ is invertible on $(T_{z_{\xi}}Z)^{\perp_s}$. Denote the invertible operator by $\mathcal L_{\varepsilon,\xi}^{-1}$. Then Equation (\ref{e:lpr}) is equivalent to
\begin{equation*}
w=N_{\varepsilon,\xi}(w).
\end{equation*}
Here
\begin{equation*}
N_{\varepsilon,\xi}(w)=-\mathcal L_{\varepsilon,\xi}^{-1}(P_{\varepsilon\xi}Df_{\varepsilon}(z_{\xi})+P_{\varepsilon\xi}R(z_{\xi},w)).
\end{equation*}
\begin{lemma}\label{l:nbd}
There is a small ball $B_{\delta}\subset (T_{z_{\xi}}Z^{\varepsilon})^{\perp_s}$ such that $N_{\varepsilon,\xi}$ maps $B_{\delta}$ to itself if $0<\varepsilon<\bar\varepsilon$ and $|\xi|\le \bar\rho$.
\end{lemma}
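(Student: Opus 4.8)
The plan is to run a standard fixed-point argument for $N_{\varepsilon,\xi}$ on a small ball, the point being only to track that all constants can be chosen uniformly in $|\xi|\le\bar\rho$ and $0<\varepsilon<\bar\varepsilon$. First I would record that, by Proposition \ref{p:invertible}, $\mathcal L_{\varepsilon,\xi}$ is invertible on $(T_{z_{\xi}}Z^{\varepsilon})^{\perp_s}$ with $\|\mathcal L_{\varepsilon,\xi}^{-1}\|\le C$ for a constant $C=C(\bar\rho,\bar\varepsilon)$ independent of the particular $\xi$ and $\varepsilon$; this is the crucial uniform bound. Since the orthogonal projection $P_{\varepsilon,\xi}$ has operator norm $\le 1$, for any $w\in(T_{z_{\xi}}Z^{\varepsilon})^{\perp_s}$ we then have
\begin{equation*}
\|N_{\varepsilon,\xi}(w)\|_s\le C\big(\|Df_{\varepsilon}(z_{\xi})\|_s+\|\mathcal R(z_{\xi},w)\|_s\big).
\end{equation*}

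Next I would estimate the two terms on the right. By Lemma \ref{l:dgss}, $\|Df_{\varepsilon}(z_{\xi})\|_s\le C(\varepsilon|\nabla V(\varepsilon\xi)|+O(\varepsilon^2))\le C_1\varepsilon$, again with $C_1$ uniform for $|\xi|\le\bar\rho$ since $|\nabla V|$ is bounded. By Corollary \ref{l:Row}, for $w\in B_{\delta}$ with $\delta\le 1$ we have $\|\mathcal R(z_{\xi},w)\|_s\le C_2\|w\|_s^{1+\sigma}\le C_2\delta^{1+\sigma}$, where $\sigma=\min\{1,p-1\}>0$ and $C_2$ is independent of $w$. Combining, for $w\in B_{\delta}$,
\begin{equation*}
\|N_{\varepsilon,\xi}(w)\|_s\le C C_1\varepsilon+C C_2\delta^{1+\sigma}.
\end{equation*}

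It remains to choose the parameters. Since $1+\sigma>1$, I would first fix $\delta\in(0,1]$ small enough that $C C_2\delta^{1+\sigma}\le\delta/2$, which is possible because $CC_2\delta^{\sigma}\to 0$ as $\delta\to 0$; then, with this $\delta$ now fixed, I would shrink $\bar\varepsilon$ (keeping $\bar\varepsilon\le\varepsilon_2$ from Proposition \ref{p:invertible}) so that $CC_1\varepsilon\le\delta/2$ for all $0<\varepsilon<\bar\varepsilon$. With these choices $\|N_{\varepsilon,\xi}(w)\|_s\le\delta$ for every $w\in B_{\delta}$, i.e. $N_{\varepsilon,\xi}$ maps $B_{\delta}$ into itself, uniformly for $|\xi|\le\bar\rho$ and $0<\varepsilon<\bar\varepsilon$. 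The only subtlety, and the step I would be most careful about, is verifying that the constants $C$, $C_1$, $C_2$ genuinely do not depend on $\xi$ and $\varepsilon$ individually — this is exactly what Proposition \ref{p:invertible}, Lemma \ref{l:dgss} and Corollary \ref{l:Row} were set up to provide, so no new difficulty arises here; one should also note that $N_{\varepsilon,\xi}(w)$ indeed lies in $(T_{z_{\xi}}Z^{\varepsilon})^{\perp_s}$ because $\mathcal L_{\varepsilon,\xi}^{-1}$ maps that space to itself.
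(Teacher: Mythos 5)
Your proof is correct and follows essentially the same route as the paper: the paper's own proof simply records the estimate $\|N_{\varepsilon,\xi}(w)\|_{s}\le C(\varepsilon|\nabla V(\varepsilon\xi)|+O(\varepsilon^2))+O(\|w\|_{s}^{1+\sigma})$ from Lemma \ref{l:dgss}, Corollary \ref{l:Row} and the invertibility in Proposition \ref{p:invertible}, and then asserts that a small $\delta$ works. You have filled in the parameter bookkeeping — first fixing $\delta$ so that $CC_2\delta^{1+\sigma}\le\delta/2$ using $\sigma>0$, then shrinking $\bar\varepsilon$ so the $Df_{\varepsilon}(z_{\xi})$ contribution is $\le\delta/2$ — which the paper leaves implicit but is exactly what is needed.
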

\begin{proof}
Using Lemma \ref{l:dgss}, we obtain
\begin{equation}\label{e:nexw}
\|N_{\varepsilon,\xi}(w)\|_{s}\le C(\varepsilon|\nabla V(\varepsilon\xi)|+O(\varepsilon^2))+O(\|w\|_{s}^{1+\sigma}).
\end{equation}
Then there is a small positive constant $\delta$ such that $N_{\varepsilon,\xi}$ maps $B_{\delta}\subset (T_{z_{\xi}}Z)^{\perp_s}$ to itself if $0<\varepsilon<\bar\varepsilon$ and $|\xi|\le \bar\rho$.
\end{proof}

\begin{lemma}\label{l:ncm}
For all $w_1,w_2\in B_1\subset H^{s}(\mathbf R^n)$, we have that
\begin{equation*}
\|N_{\varepsilon,\xi}(w_2)-N_{\varepsilon,\xi}(w_1)\|_{s}\le C \max(\|w_1\|_{s}^{\sigma},\|w_2\|_{s}^{\sigma})\|w_2-w_1\|_{s},
\end{equation*}
where $C$ is a constant independent on $w_1$ and $w_2$, $\sigma=\min\{1,p-1\}$.
\end{lemma}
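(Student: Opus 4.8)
The plan is to exploit the structure $N_{\varepsilon,\xi}(w)=-\mathcal L_{\varepsilon,\xi}^{-1}\big(P_{\varepsilon\xi}Df_{\varepsilon}(z_{\xi})+P_{\varepsilon\xi}\mathcal R(z_{\xi},w)\big)$ and notice that the only $w$-dependent term is the remainder $\mathcal R(z_{\xi},w)$. Hence the difference collapses:
\begin{equation*}
N_{\varepsilon,\xi}(w_2)-N_{\varepsilon,\xi}(w_1)=-\mathcal L_{\varepsilon,\xi}^{-1}P_{\varepsilon\xi}\big(\mathcal R(z_{\xi},w_2)-\mathcal R(z_{\xi},w_1)\big).
\end{equation*}
First I would record that $P_{\varepsilon\xi}$, being an orthogonal projection in $H^s(\mathbf R^n)$, has operator norm $\le 1$, so it contributes nothing to the estimate. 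Next I would invoke Proposition~\ref{p:invertible}: the coercivity bound $|\langle\mathcal L_{\varepsilon,\xi}v,v\rangle_s|\ge C\|v\|_s^2$ (uniform in $|\xi|\le\bar\rho$, $0<\varepsilon<\bar\varepsilon$) yields a uniform bound $\|\mathcal L_{\varepsilon,\xi}^{-1}\|\le C^{-1}$ on the operator norm of the inverse on $(T_{z_{\xi}}Z^{\varepsilon})^{\perp_s}$; this is the point where it matters that the constant in Proposition~\ref{p:invertible} does not degenerate as $\varepsilon\to 0$. Combining these two observations gives
\begin{equation*}
\|N_{\varepsilon,\xi}(w_2)-N_{\varepsilon,\xi}(w_1)\|_s\le C\,\|\mathcal R(z_{\xi},w_2)-\mathcal R(z_{\xi},w_1)\|_s.
\end{equation*}

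Then I would simply apply Lemma~\ref{l:rw1w2}, which provides exactly the Lipschitz-type estimate $\|\mathcal R(z_{\xi},w_2)-\mathcal R(z_{\xi},w_1)\|_s\le C\max\{\|w_1\|_s^{\sigma},\|w_2\|_s^{\sigma}\}\|w_2-w_1\|_s$ with $\sigma=\min\{1,p-1\}$, valid for $w_1,w_2\in B_1\subset H^s(\mathbf R^n)$. Absorbing the two constants into one finishes the proof. The whole argument is three lines once the preparatory facts are cited.

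The only genuinely substantive point — and it is already done — is the uniform invertibility of $\mathcal L_{\varepsilon,\xi}$ in Proposition~\ref{p:invertible}; everything else here is bookkeeping. A minor care point is that $N_{\varepsilon,\xi}$ is a priori only defined for $w\in(T_{z_{\xi}}Z^{\varepsilon})^{\perp_s}$, so strictly speaking the statement should be read with $w_1,w_2\in B_1\cap(T_{z_{\xi}}Z^{\varepsilon})^{\perp_s}$; this does not affect the estimate since the bound from Lemma~\ref{l:rw1w2} holds for all admissible pairs. I anticipate no real obstacle: the lemma is the standard contraction-type estimate that, together with Lemma~\ref{l:nbd} and the smallness of $\delta$ (choosing $\delta$ so that $C\delta^{\sigma}<1$), will let the Banach fixed point theorem produce the unique solution $w=w_{\varepsilon,\xi}$ of~(\ref{e:pdg0}) in $B_{\delta}$.
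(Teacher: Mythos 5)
Your proposal is correct and follows essentially the same route as the paper: write the difference $N_{\varepsilon,\xi}(w_2)-N_{\varepsilon,\xi}(w_1)=-\mathcal L_{\varepsilon,\xi}^{-1}P_{\varepsilon\xi}(\mathcal R(z_{\xi},w_2)-\mathcal R(z_{\xi},w_1))$, bound $\mathcal L_{\varepsilon,\xi}^{-1}$ and $P_{\varepsilon\xi}$ by uniform constants (the former via Proposition~\ref{p:invertible}), and invoke Lemma~\ref{l:rw1w2}. You merely spell out details the paper leaves implicit, such as the operator-norm bound for $P_{\varepsilon\xi}$ and the remark about the natural domain $(T_{z_{\xi}}Z^{\varepsilon})^{\perp_s}$.
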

\begin{proof}
Compute
\begin{eqnarray*}
\|N_{\varepsilon,\xi}(w_2)-N_{\varepsilon,\xi}(w_1)\|_{s}&=&\|-\mathcal L_{\varepsilon,\xi}^{-1}P_{\varepsilon\xi}(\mathcal R(z_{\xi},w_2)-\mathcal R(z_{\xi},w_1))\|_{s}\\
&\le&C\|\mathcal R(z_{\xi},w_2)-\mathcal R(z_{\xi},w_1)\|_{s}.
\end{eqnarray*}
Then by Lemma \ref{l:rw1w2}, we have that
\begin{equation*}
\|N_{\varepsilon,\xi}(w_2)-N_{\varepsilon,\xi}(w_1)\|_{s}\le C\max(\|w_1\|_{s}^{\sigma},\|w_1\|_{s}^{\sigma})\|w_2-w_1\|_{s},
\end{equation*}
where $\sigma=\min\{1,p-1\}$. This completes the proof.
\end{proof}

\begin{prop}\label{p:dgw}
For $0<\varepsilon<\bar\varepsilon$ and $|\xi|\le\bar\rho$, there exists a unique $w=w(\varepsilon,\xi)\in (T_{z_{\xi}}Z)^{\perp_s}$ such that
$Df_{\varepsilon}(z_{\xi}+w)\in T_{z_{\xi}}Z$, and $w(\varepsilon,\xi)$ is  of class $C^1$. Moreover, the functional
$\Phi_{\varepsilon}(\xi)=f_{\varepsilon}(z_{\xi}+w(\varepsilon,\xi))$ has the same regularity as $w$ and satisfies:
\begin{equation*}
\nabla\Phi_{\varepsilon}(\xi_0)=0\quad\Rightarrow\quad Df_{\varepsilon}(z_{\xi_0}+w(\varepsilon,\xi_0))=0.
\end{equation*}
\end{prop}
\begin{proof}
From Lemma \ref{l:nbd} and \ref{l:ncm}, the map $N_{\varepsilon,\xi}$ is a contraction on $B_{\delta}$ for $0<\varepsilon<\bar\varepsilon$ and $|\xi|\le\bar\rho$. Then there exists a unique $w$ such that $w=N_{\varepsilon,\xi}(w)$. For fixed $\varepsilon$, define
\begin{equation*}
\Xi_{\varepsilon}:(\xi,w)\to P_{\varepsilon,\xi}Df_{\varepsilon}(z_{\xi}+w).
\end{equation*}
Applying the Implicit Function Theorem to $\Xi_{\varepsilon}$, we have that $w(\varepsilon,\xi)$ is $C^1$ with respect to $\xi$. Then using a standard argument in \cite{ABC:ARMA97, AB:VPM}, we obtain that the critical points of $\Phi_{\varepsilon}(\xi)=f_{\varepsilon}(z_{\xi}+w(\varepsilon,\xi))$ give rise to critical points of $f_{\varepsilon}$.
\end{proof}

In what follows, we use the simple notation $w$ to denote $w(\varepsilon,\xi)$ which is obtained in Proposition \ref{p:dgw}.
\begin{remark}\label{r:ws}
By Equation (\ref{e:nexw}), it follows that
\begin{equation*}
\|w\|_{s}\le C(\varepsilon|\nabla V(\varepsilon\xi)|+\varepsilon^2),
\end{equation*}
where $C>0$.
\end{remark}

\begin{lemma}\label{l:nws}
The following inequality holds:
\begin{equation*}
\|\nabla_{\xi}w\|_{s}\le C\left(\varepsilon|\nabla V(\varepsilon\xi)|+O(\varepsilon^2)\right)^{\sigma},
\end{equation*}
where $C>0$ and $\sigma=\min\{1,p-1\}$.
\end{lemma}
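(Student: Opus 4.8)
The plan is to differentiate the fixed-point identity $w = N_{\varepsilon,\xi}(w)$ with respect to $\xi$ and estimate the resulting expression using the contraction properties already established. Since $w=w(\varepsilon,\xi)$ solves $w = -\mathcal L_{\varepsilon,\xi}^{-1}\bigl(P_{\varepsilon,\xi}Df_{\varepsilon}(z_{\xi}) + P_{\varepsilon,\xi}\mathcal R(z_{\xi},w)\bigr)$, applying $\nabla_\xi$ produces terms of three types: (i) the $\xi$-derivative of $\mathcal L_{\varepsilon,\xi}^{-1}$ (equivalently, $-\mathcal L_{\varepsilon,\xi}^{-1}(\nabla_\xi \mathcal L_{\varepsilon,\xi})\mathcal L_{\varepsilon,\xi}^{-1}(\cdots)$), which, by the uniform invertibility in Proposition \ref{p:invertible} together with the bound $\|w\|_s\le C(\varepsilon|\nabla V(\varepsilon\xi)|+\varepsilon^2)$ from Remark \ref{r:ws}, contributes at worst a quantity of the same order as $\|w\|_s$; (ii) the term $\mathcal L_{\varepsilon,\xi}^{-1}P_{\varepsilon,\xi}\nabla_\xi Df_{\varepsilon}(z_{\xi})$, which I would control by the same computation underlying Lemma \ref{l:dgss} — writing $Df_\varepsilon(z_\xi)$ via $\int(V(\varepsilon x)-V(\varepsilon\xi))z_\xi v\,dx$, the $\xi$-derivative brings down either $\varepsilon\nabla V(\varepsilon\xi)$ factors or a $\partial_{\xi_i}z_\xi$, and Lemmas \ref{l:pxe} and \ref{l:vxz} give an $O(\varepsilon|\nabla V(\varepsilon\xi)| + \varepsilon^2)$ bound; and (iii) the term involving $\nabla_\xi[\mathcal R(z_\xi,w)]$, which splits into $(\partial_{z}\mathcal R)\cdot\nabla_\xi z_\xi$ and $(\partial_w\mathcal R)\cdot\nabla_\xi w$.

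The key mechanism is that the $(\partial_w \mathcal R)\cdot\nabla_\xi w$ piece can be absorbed into the left-hand side. More precisely, I would collect all occurrences of $\nabla_\xi w$, obtaining a relation of the schematic form
\begin{equation*}
\nabla_\xi w = A_{\varepsilon,\xi} + B_{\varepsilon,\xi}[\nabla_\xi w],
\end{equation*}
where $\|A_{\varepsilon,\xi}\|_s \le C(\varepsilon|\nabla V(\varepsilon\xi)| + O(\varepsilon^2))^{\sigma}$ — here the exponent $\sigma$ enters exactly as in Corollary \ref{l:Row} and Lemma \ref{l:ncm}, since the worst term is the one carrying $\|w\|_s^{\sigma}$ from the nonlinearity — and where $B_{\varepsilon,\xi}$ is a bounded linear operator with $\|B_{\varepsilon,\xi}\| \le C\|w\|_s^{\sigma} \le C(\varepsilon|\nabla V(\varepsilon\xi)|+\varepsilon^2)^{\sigma}$, which is $<\tfrac12$ once $\bar\varepsilon$ is small. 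Then $(I - B_{\varepsilon,\xi})$ is invertible with uniformly bounded inverse, and $\|\nabla_\xi w\|_s \le 2\|A_{\varepsilon,\xi}\|_s$, which is the claimed estimate.

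For the routine inputs I would lean on: Lemma \ref{l:rw1w2} and its corollary to control $\mathcal R$ and its increments; Lemma \ref{l:pxe} to replace $\partial_{\xi_i}z_\xi$ by $-\partial_{x_i}z_\xi + O(\varepsilon)$ wherever derivatives of $z_\xi$ appear; Lemma \ref{l:vxz} to handle $V(\varepsilon x)-V(\varepsilon\xi)$ against $z_\xi$ and $\partial_{x_i}z_\xi$; Proposition \ref{p:invertible} to keep $\mathcal L_{\varepsilon,\xi}^{-1}$ uniformly bounded; and Remark \ref{r:ws} as the a priori size of $w$. One technical point requiring care is the differentiability of $\mathcal L_{\varepsilon,\xi}^{-1}$ in $\xi$ and the fact that the projection $P_{\varepsilon,\xi}$ also depends on $\xi$; this is handled by the Implicit Function Theorem argument already invoked in Proposition \ref{p:dgw}, which guarantees $w(\varepsilon,\cdot)\in C^1$, and by noting that $\nabla_\xi P_{\varepsilon,\xi}$ is bounded (its range is finite-dimensional, spanned by $\xi$-derivatives of the $\partial_{\xi_i}z_\xi$, all $O(1)$ in $H^s$ by Lemma \ref{l:pxe} and the smoothness of $V$).

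The main obstacle I anticipate is purely bookkeeping: correctly isolating the exponent $\sigma$ rather than $1$ in the final bound. The $\varepsilon$-linear contributions (from differentiating $Df_\varepsilon(z_\xi)$ and from the $O(\varepsilon)$ remainder in Lemma \ref{l:pxe}) are genuinely of order $\varepsilon|\nabla V(\varepsilon\xi)|+\varepsilon^2$, but the term $\nabla_\xi[\mathcal R(z_\xi,w)]$ inherits only a factor $\|w\|_s^{\sigma}$ when $p<2$ (since $\mathcal R$ is then only $C^{1,\sigma}$-type near $z_\xi$), so the overall estimate degrades to the $\sigma$-th power as stated. Keeping track of which terms carry the full power and which carry $\sigma$, and verifying that the worst one indeed matches $\bigl(\varepsilon|\nabla V(\varepsilon\xi)|+O(\varepsilon^2)\bigr)^\sigma$, is the delicate part; everything else is a direct application of the preceding lemmas.
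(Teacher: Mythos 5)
Your proposal is correct and follows essentially the same route as the paper: differentiate the relation defining $w$ in $\xi$, collect the $\nabla_\xi w$ contribution coming from the nonlinearity, and absorb it by inverting a slightly perturbed linear operator — the paper's $\hat{\mathcal L}=\mathcal L_{\varepsilon,\xi}-\mathcal R_w$ is precisely your $(I-B_{\varepsilon,\xi})$ read through $\mathcal L_{\varepsilon,\xi}$, and the exponent $\sigma$ enters exactly where you say, via $\|\mathcal R_w\|\lesssim\|w\|_s^\sigma$ together with Remark \ref{r:ws}. The only real stylistic difference is that the paper differentiates the uninverted weak relation $\langle\mathcal L_{\varepsilon,\xi}w,v\rangle_s+\langle Df_\varepsilon(z_\xi),v\rangle_s+\langle\mathcal R(z_\xi,w),v\rangle_s=0$ (equation (\ref{e:Lwe})) and groups the $\partial_{\xi_j}w$-bearing terms directly into $\hat{\mathcal L}\partial_{\xi_j}w$, which sidesteps ever having to estimate $\nabla_\xi(\mathcal L_{\varepsilon,\xi}^{-1})$ or $\nabla_\xi P_{\varepsilon,\xi}$; what remains is then the explicit term-by-term bound of $T_1,\dots,T_4$ that you anticipate under "routine inputs."
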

\begin{proof}
By (\ref{e:lpr}) and Proposition \ref{p:dgw}, we have that, for all $v\in (T_{z_{\xi}}Z^{\varepsilon})^{\perp_s}$,
\begin{equation}\label{e:Lwe}
\langle \mathcal L_{\varepsilon,\xi}w,v\rangle_s+\langle Df_{\varepsilon}(z_{\xi}), v\rangle_s+\langle \mathcal R(z_{\xi},w),v\rangle_s=0.
\end{equation}
Since $DF_{\varepsilon,\xi}(z_{\xi})=0$, Equation (\ref{e:Lwe}) becomes
\begin{multline*}
\langle w,v\rangle_s+\int_{\mathbf R^n}V(\varepsilon x)w v dx-p\int_{\mathbf R^n}z_{\xi}^{p-1}wvdx+\int_{\mathbf R^n}[V(\varepsilon x)-V(\varepsilon \xi)]z_{\xi}vdx\\
+\int_{\mathbf R^n}R(z_{\xi},w)vdx=0.
\end{multline*}
Hence
\begin{eqnarray}\label{e:wjlr}
&&\langle \partial_{\xi_j}w,v\rangle_s+\int_{\mathbf R^n}V(\varepsilon x) (\partial_{\xi_j}w) v dx-p\int_{\mathbf R^n}z_{\xi}^{p-1}(\partial_{\xi_j}w)vdx\\
&&-p(p-1)\int_{\mathbf R^n}z_{\xi}^{p-2}(\partial_{\xi_j}z) wvdx+\int_{\mathbf R^n}(V(\varepsilon x)-V(\varepsilon \xi))(\partial_{\xi_j}z) vdx
\notag\\
&&-\varepsilon(\partial_{x_j}V)(\varepsilon \xi)\int_{\mathbf R^n}zvdx
-\int_{\mathbf R^n}(R_z\partial_{\xi_j}z+R_w\partial_{\xi_j}w)vdx=0.\notag
\end{eqnarray}
Set $\hat {\mathcal L}=\mathcal L_{\varepsilon,\xi}-\mathcal{R}_w$, where $\langle \mathcal{R}_w v_1,v_2\rangle=\int_{\mathbf R^n}R_{w}v_1v_2dx.$ Since $R_w\to 0$ as $w\to 0$ and $\mathcal L_{\varepsilon,\xi}$ is invertible on $(T_{z_{\xi}}Z^{\varepsilon})^{\perp_s}$, $\hat {\mathcal L}$ is also invertible for $0<\varepsilon<\bar\varepsilon$ and $|\xi|\le\bar\rho$. From (\ref{e:wjlr}), it holds that
\begin{multline*}
\langle\hat {\mathcal L} \partial_{\xi_j}w,v\rangle=p(p-1)\int_{\mathbf R^n}z_{\xi}^{p-2}(\partial_{\xi_j}z) wvdx-\int_{\mathbf R^n}(V(\varepsilon x)-V(\varepsilon \xi))(\partial_{\xi_j}z) vdx\\
+\varepsilon(\partial_{x_j}V)(\varepsilon \xi)\int_{\mathbf R^n}zvdx
+\int_{\mathbf R^n}R_z\partial_{\xi_j}zvdx=T_1+T_2+T_3+T_4.
\end{multline*}
Next, we shall estimate every term on the left of the equation above.
By Theorem \ref{t:unda} and Remark \ref{r:pu}, it holds that, for $1<p\le 2$,
\begin{eqnarray*}
|T_1|&=&p(p-1)\left|\int_{\mathbf R^n}z_{\xi}^{p-2}(\partial_{\xi_j}z) wvdx\right|\\
&\le &C\int_{\mathbf R^n}(1+|x|^{n+2s})^{2-p}\,\frac{1}{1+|x|^{n+2s}}|wv|dx\\
&\le&C\int_{\mathbf R^n}\frac{1}{(1+|x|^{n+2s})^{p-1}}|wv|dx\\
&\le& C\int_{\mathbf R^n}|wv|dx\le C\|w\|_0\|v\|_0\le C\|w\|_{s}\|v\|_s,
\end{eqnarray*}
and, for $2<p<\frac{n+2s}{n-2s}$ (if $2<\frac{n+2s}{n-2s}$),
\begin{eqnarray*}
\left|\int_{\mathbf R^n}z_{\xi}^{p-2}(\partial_{\xi_j}z) wvdx\right|&\le &C\int_{\mathbf R^n}\frac{1}{(1+|x|^{n+2s})^{p-1}}|wv|dx\\
&\le&C\|w\|_{s}\|v\|_s.
\end{eqnarray*}
Therefore, we have that
\begin{equation*}
|T_1|\le C\|w\|_{s}\|v\|_s.
\end{equation*}
Since $0<\varepsilon<\bar\varepsilon$ and $|\xi|\le \bar \rho$, by Lemma \ref{l:vxz} we have
\begin{eqnarray}\label{e:t2}
|T_2|&=&\left|\int_{\mathbf R^n}(V(\varepsilon x)-V(\varepsilon \xi))(\partial_{\xi_j}z) vdx\right|\\
&\le&\int_{\mathbf R^n}|V(\varepsilon x)-V(\varepsilon \xi)||\partial_{\xi_j}z| |v|dx\notag\\
&\le&\left(\int_{\mathbf R^n}|V(\varepsilon x)-V(\varepsilon \xi)|^2|\partial_{\xi_j}z|^2dx\right)^{\frac{1}{2}}\|v\|_0\notag\\
&\le& C \varepsilon|\nabla V(\varepsilon\xi)|\|v\|_s.\notag
\end{eqnarray}
Then we obtain that
\begin{equation*}
|T_2|\le C\varepsilon|\nabla V(\varepsilon\xi)| \|v\|_s.
\end{equation*}
Estimating the third term, we have
\begin{multline*}
|T_3|=\varepsilon\left|(\partial_{x_j}V)(\varepsilon \xi)\int_{\mathbf R^n}zvdx\right|
\le\varepsilon|(\nabla V)(\varepsilon \xi)|\|z\|_0\|v\|_0
\le \varepsilon|(\nabla V)(\varepsilon \xi)|\|v\|_s.
\end{multline*}
It remains to estimate the final term.
A direct computation yields
\begin{eqnarray*}
 |T_4|&=&\left|\int_{\mathbf R^n}R_z\partial_{\xi_j}zvdx\right|\le \int_{\mathbf R^n}|R_z| |\partial_{\xi_j}z||v|dx\\
 &\le&C\int_{\mathbf R^n}\left||z_{\xi}+w|^{p-1}-|z_{\xi}|^{p-1}\right|\cdot|\partial_{\xi_j}z_{\xi}|\cdot|v|dx\\
 &&+C\int_{\mathbf R^n}|z_{\xi}|^{p-2}\cdot|\partial_{\xi_j}z_{\xi}|\cdot|w|\cdot|v|dx
\end{eqnarray*}
Then, for $1<p\le 2$,
\begin{eqnarray*}
&&\int_{\mathbf R^n}\left||z_{\xi}+w|^{p-1}-|z_{\xi}|^{p-1}\right|\cdot|\partial_{\xi_j}z_{\xi}|\cdot|v|dx\\
 &\le&C\int_{\mathbf R^n}|w|^{p-1}\cdot|\partial_{\xi_j}z_{\xi}|\cdot|v|dx\\
 &\le& C\|w\|_{L^{p+1}}^{p-1}\|\partial_{\xi_j}z_{\xi}\|_{L^{p+1}}\|v\|_{L^{p+1}}\le C\|w\|_{s}^{p-1}\|v\|_{s},
\end{eqnarray*}
and, for $2<p<\frac{n+2s}{n-2s}$ (if $2<\frac{n+2s}{n-2s}$),
\begin{eqnarray*}
&&\int_{\mathbf R^n}\left||z_{\xi}+w|^{p-1}-|z_{\xi}|^{p-1}\right|\cdot|\partial_{\xi_j}z_{\xi}|\cdot|v|dx\\
&\le& \int_{\mathbf R^n}(p-1)|z_{\xi}+\theta_3w|^{p-2}|w|\cdot|\partial_{\xi_j}z_{\xi}|\cdot|v|dx\\
&\le& C\|z_{\xi}+\theta_3w\|_{L^{p+1}}^{p-2}\|\partial_{\xi_j}z_{\xi}\|_{L^{p+1}}\|w\|_{L^{p+1}}\|v\|_{L^{p+1}}\le C\|w\|_{s}\|v\|_{s}.
\end{eqnarray*}
Here $\theta_3\in[0,1]$.
Then we have that
\begin{equation*}
\int_{\mathbf R^n}\left||z_{\xi}+w|^{p-1}-|z_{\xi}|^{p-1}\right|\cdot|\partial_{\xi_j}z_{\xi}|\cdot|v|dx\le C\|w\|_{s}^{\sigma}\|v\|_{s},
\end{equation*}
where $\sigma=\min\{1,p-1\}$.
Furthermore, we estimate
\begin{eqnarray*}
&&\int_{\mathbf R^n}|z_{\xi}|^{p-2}\cdot|\partial_{\xi_j}z_{\xi}|\cdot|w|\cdot|v|dx\\
&\le&C\int_{\mathbf R^n}\left(\frac{1}{(1+|x-\xi|)^{n+2s}}\right)^{p-1}\cdot|w|\cdot|v|dx\\
&\le& C\|w\|_0\|v\|_0\le C\|w\|_{s}\|v\|_s.
\end{eqnarray*}
Therefore, we obtain
\begin{equation*}
|T_4|\le C\|w\|_{s}^{\sigma}\|v\|_{s},
\end{equation*}
where $\sigma=\min\{1,p-1\}$.

Summarizing the estimates for $T_1,T_2,T_3,T_4$, we get
\begin{equation*}
\|\hat{\mathcal L} \partial_{\xi_j}w\|_s\le C(\varepsilon|\nabla V(\varepsilon\xi)|+\|w\|_{s}^{\sigma}).
\end{equation*}
Then by Remark \ref{r:ws}, it holds that
\begin{equation*}
\|\hat{\mathcal L} \partial_{\xi_j}w\|_s\le C(\varepsilon|\nabla V(\varepsilon\xi)|+O(\varepsilon^2))^{\sigma}.
\end{equation*}
 Thus, we finally obtain
\begin{equation*}
\|\nabla_{\xi}w\|_{s}\le C(\varepsilon|\nabla V(\varepsilon\xi)|+O(\varepsilon^2))^{\sigma}.
\end{equation*}
This completes the proof.
\end{proof}

\subsection{Analysis of $\Phi_{\varepsilon}(\xi)$}

In this subsection, we shall expand $\Phi_{\varepsilon}(\xi)$. By the definition, we have that
\begin{eqnarray*}
\Phi_{\varepsilon}(\xi)&=&\frac{1}{2}\|z_{\xi}+w(\varepsilon,\xi)\|_s^2+\frac{1}{2}\int_{\mathbf R^n}V(\varepsilon x)(z_{\xi}+w(\varepsilon,\xi))^2dx\\
&&-\frac{1}{p+1}\int_{\mathbf R^n}|z_{\xi}+w(\varepsilon,\xi)|^{p+1}dx
\end{eqnarray*}
Since $(-\Delta)^sz_{\xi}+z_{\xi}+V(\varepsilon\xi)z_{\xi}=z_{\xi}^p$, it holds that
\begin{equation*}
\langle z_{\xi},w\rangle_{s}=-V(\varepsilon\xi)\int_{\mathbf R^n}z_{\xi}wdx+\int_{\mathbf R^n}z_{\xi}^pwdx.
\end{equation*}
Therefore, we can rewrite
\begin{eqnarray*}
\Phi_{\varepsilon}(\xi)&=&\left(\frac{1}{2}-\frac{1}{p+1}\right)\int_{\mathbf R^n}z^{p+1}dx+\frac{1}{2}\int_{\mathbf R^n}(V(\varepsilon x)-V(\varepsilon\xi))z^2dx\\
&&+\int_{\mathbf R^n}(V(\varepsilon x)-V(\varepsilon\xi))zwdx+\frac{1}{2}\int_{\mathbf R^n}V(\varepsilon x)w^2dx\\
&&+\frac{1}{2}\|w\|_s^2-\frac{1}{p+1}\int_{\mathbf R^n}\left(|z+w|^{p+1}-z^{p+1}-(p+1)z^pw\right)dx.
\end{eqnarray*}
By the definition of $z(x)$ (see Subsection \ref{sb:cuf}), $z(x)=b(\varepsilon\xi)U(a(\varepsilon\xi )x)$ where $a(\varepsilon \xi)=(1+V(\varepsilon\xi))^{\frac{1}{2s}}$ and $b(\varepsilon \xi)=(1+V(\varepsilon\xi))^{\frac{1}{p-1}}$. Then we have that
\begin{equation*}
\int_{\mathbf R^n}z^{p+1}dx=C_0(1+V(\varepsilon\xi))^{\theta},
\end{equation*}
where $C_0=\int_{\mathbf R^n}U^{p+1}dx$ and $\theta=\frac{p+1}{p-1}-\frac{n}{2s}$. Let $C_1=\left(\frac{1}{2}-\frac{1}{p+1}\right)C_0$. Then
\begin{equation*}
\Phi_{\varepsilon}(\xi)=C_1(1+V(\varepsilon\xi))^{\theta}+\Gamma_{\varepsilon}(\xi)+\Psi_{\varepsilon}(\xi),
\end{equation*}
where
\begin{equation*}
\Gamma_{\varepsilon}(\xi)=\frac{1}{2}\int_{\mathbf R^n}[V(\varepsilon x)-V(\varepsilon\xi)]z^2dx+\int_{\mathbf R^n}[V(\varepsilon x)-V(\varepsilon\xi)]zwdx
\end{equation*}
and
\begin{eqnarray*}
\Psi_{\varepsilon}(\xi)&=&\frac{1}{2}\int_{\mathbf R^n}V(\varepsilon x)w^2dx+\frac{1}{2}\|w\|_s^2\\
&&-\frac{1}{p+1}\int_{\mathbf R^n}\left[|z+w|^{p+1}-z^{p+1}-(p+1)z^pw\right]dx.\notag
\end{eqnarray*}

\begin{lemma}
We have the following estimate:
\begin{equation*}
|\nabla \Psi_{\varepsilon}(\xi)|\le C \|w\|_s( \|w\|_s^{\sigma}+\|\nabla_{\xi}w\|_s).
\end{equation*}
\end{lemma}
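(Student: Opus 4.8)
The plan is to differentiate $\Psi_\varepsilon$ in $\xi$ by the chain rule, exploiting that, for a fixed $z_\xi$, the functional $\Psi_\varepsilon$ vanishes to second order in $w$ at $w=0$; consequently every term of $\nabla_\xi\Psi_\varepsilon(\xi)$ carries at least one factor $\|w\|_s$, paired with either $\nabla_\xi w$ (when $\partial_\xi$ hits $w$) or an extra power $\|w\|_s^\sigma$ (when $\partial_\xi$ hits $z_\xi$). Write
\begin{equation*}
\tilde\Psi(z,w)=\frac12\int_{\mathbf R^n}V(\varepsilon x)w^2\,dx+\frac12\|w\|_s^2-\frac1{p+1}\int_{\mathbf R^n}H(z,w)\,dx,
\end{equation*}
where $H(z,w)=|z+w|^{p+1}-z^{p+1}-(p+1)z^pw$, so that $\Psi_\varepsilon(\xi)=\tilde\Psi(z_\xi,w(\varepsilon,\xi))$. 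Recalling that $w(\varepsilon,\xi)$ is $C^1$ in $\xi$ by Proposition \ref{p:dgw} and that $z_\xi$ depends smoothly on $\xi$ (cf. Lemma \ref{l:pxe}), one has
\begin{equation*}
\partial_{\xi_j}\Psi_\varepsilon(\xi)=D_w\tilde\Psi(z_\xi,w)[\partial_{\xi_j}w]+D_z\tilde\Psi(z_\xi,w)[\partial_{\xi_j}z_\xi].
\end{equation*}

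For the first term I would bound $D_w\tilde\Psi(z_\xi,w)$ as a functional on $H^s$. Its $V$- and Gagliardo-pieces give $\int_{\mathbf R^n}V(\varepsilon x)w\,\partial_{\xi_j}w\,dx+\langle w,\partial_{\xi_j}w\rangle_s$, bounded by $C\|w\|_s\|\partial_{\xi_j}w\|_s$ since $V$ is bounded. For the nonlinear piece, $\partial_wH(z,w)=(p+1)\big(|z+w|^{p-1}(z+w)-z^p\big)$, and the elementary inequality $\big||a|^{p-1}a-|b|^{p-1}b\big|\le C\big(|a|^{p-1}+|b|^{p-1}\big)|a-b|$ together with the boundedness of $z_\xi$ (Theorem \ref{t:unda}) gives the pointwise estimate $|\partial_wH(z_\xi,w)|\le C|z_\xi|^{p-1}|w|+C|w|^p$. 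H\"older's inequality and the subcritical Sobolev embedding $H^s(\mathbf R^n)\hookrightarrow L^{p+1}(\mathbf R^n)$ (Theorem \ref{l:embedding}) then bound $\big|\int_{\mathbf R^n}\partial_wH(z_\xi,w)\,\partial_{\xi_j}w\,dx\big|$ by $C\big(\|w\|_s+\|w\|_s^p\big)\|\partial_{\xi_j}w\|_s\le C\|w\|_s\|\partial_{\xi_j}w\|_s$, using $\|w\|_s\le1$ (Lemma \ref{l:nbd}). As in Lemma \ref{l:rw1w2} one may split the cases $1<p\le2$ and $2<p<\tfrac{n+2s}{n-2s}$ if that is cleaner; the outcome is the same.

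For the second term, only the nonlinear piece of $\tilde\Psi$ depends on $z$, and the key observation is that $\partial_zH(z,w)=(p+1)\big(|z+w|^{p-1}(z+w)-z^p-pz^{p-1}w\big)=-(p+1)R(z_\xi,w)$, so that
\begin{equation*}
D_z\tilde\Psi(z_\xi,w)[\partial_{\xi_j}z_\xi]=\mathcal R(z_\xi,w)(\partial_{\xi_j}z_\xi).
\end{equation*}
Since $\|\partial_{\xi_j}z_\xi\|_s$ is bounded uniformly for $|\xi|\le\bar\rho$ and $\varepsilon$ small (Lemma \ref{l:pxe} together with (\ref{e:abr}) and $\partial_{x_j}U\in H^s$), Corollary \ref{l:Row} gives $\big|D_z\tilde\Psi(z_\xi,w)[\partial_{\xi_j}z_\xi]\big|\le\|\mathcal R(z_\xi,w)\|_s\,\|\partial_{\xi_j}z_\xi\|_s\le C\|w\|_s^{1+\sigma}$. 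Summing over $j$ and combining with the first estimate yields
\begin{equation*}
|\nabla\Psi_\varepsilon(\xi)|\le C\|w\|_s\|\nabla_\xi w\|_s+C\|w\|_s^{1+\sigma}\le C\|w\|_s\big(\|w\|_s^\sigma+\|\nabla_\xi w\|_s\big),
\end{equation*}
which is the claim.

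I expect the only delicate point to be the pointwise Taylor-type estimates for $\partial_wH$ and $\partial_zH$ — in particular verifying that $D_w\tilde\Psi$ is genuinely $O(\|w\|_s)$ and not merely $O(\|w\|_s^\sigma)$, which works precisely because the one term of $\partial_wH$ that is linear in $w$, namely the one comparable to $z_\xi^{p-1}w$, has a bounded coefficient; the remaining higher-order terms only improve the power of $\|w\|_s$. Everything else is a routine combination of H\"older's inequality, the subcritical Sobolev embedding, and the estimates for $\mathcal R(z_\xi,\cdot)$ already established.
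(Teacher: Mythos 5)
Your argument is correct and follows essentially the same route as the paper: differentiate $\Psi_{\varepsilon}$ in $\xi$ and split into the contribution where $\partial_{\xi}$ falls on $w$ (giving $C\|w\|_s\|\nabla_{\xi}w\|_s$) and the one where it falls on $z_{\xi}$ (giving $C\|w\|_s^{1+\sigma}$). The one genuine streamlining is your observation that $\partial_zH(z,w)=-(p+1)R(z,w)$, so the $\partial_{\xi_j}z_{\xi}$ contribution is precisely $\mathcal R(z_{\xi},w)(\partial_{\xi_j}z_{\xi})$ and Corollary \ref{l:Row} applies directly, whereas the paper re-derives that bound by hand with the case split $1<p\le 2$ versus $p>2$; the remaining H\"older/Sobolev steps (you use the $L^{\infty}$ bound on $z_{\xi}$ from Theorem \ref{t:unda}, the paper its $L^{p+1}$ norm) are cosmetic variants of one another.
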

\begin{proof}
A direct calculus yields, for $j=1,2,\cdots,n$,
\begin{multline}\label{e:psif}
  \left|\partial_{\xi_j}\left( \frac{1}{2}\int_{\mathbf R^n}V(\varepsilon x)w^2dx+\frac{1}{2}\|w\|_s^2\right)\right|=\left|\int_{\mathbf R^n}V(\varepsilon x)w\partial_{\xi_j} wdx+\langle w,\partial_{\xi_j} w\rangle_s \right|\\
  \le C(\|w\|_{0}\|\partial_{\xi_j}w\|_{0}+\|w\|_s\|\partial_{\xi_j}w\|_s)\le C(\|w\|_s\|\partial_{\xi_j}w\|_s).
\end{multline}
Estimate
\begin{eqnarray*}
&&\left|\partial_{\xi_j}\left(\frac{1}{p+1}\int_{\mathbf R^n}\left(|z+w|^{p+1}-z^{p+1}-(p+1)z^pw\right)dx\right)\right|\\
&=&\left|\int_{\mathbf R^n}\left(|z+w|^{p}(\partial_{\xi_j}z+\partial_{\xi_j}w)-z^p(\partial_{\xi_j}z+\partial_{\xi_j}w)-pz^{p-1}w\partial_{\xi_j}z\right)dx\right|\\
&=&\left|\int_{\mathbf R^n}\left(p|z+\theta_4w|^{p-1}w(\partial_{\xi_j}z+\partial_{\xi_j}w)-pz^{p-1}w\partial_{\xi_j}z\right)dx\right|\\
&=&\left|\int_{\mathbf R^n}\left(pw(|z+\theta_4w|^{p-1}-z^{p-1})\partial_{\xi_j}z+pw|z+\theta_4w|^{p-1}\partial_{\xi_j}w\right)dx\right|.
\end{eqnarray*}
Here $\theta_4\in[0,1]$.
Then, for $1<p\le 2$,
\begin{eqnarray*}
&&\left|\int_{\mathbf R^n}\left(pw(|z+\theta_4w|^{p-1}-z^{p-1})\partial_{\xi_j}zdx\right)\right|\\
&\le&\int_{\mathbf R^n}\left|pw^p\partial_{\xi_j}z\right|dx\le C\|\partial_{\xi_j}z\|_{L^{p+1}}\|w\|_{L^{p+1}}^p\le C\|w\|_s^p,
\end{eqnarray*}
and, for $2<p<\frac{n+2s}{n-2s}$ (if $2<\frac{n+2s}{n-2s}$),
\begin{eqnarray*}
&&\left|\int_{\mathbf R^n}\left(pw|(z+\theta_4w|^{p-1}-z^{p-1})\partial_{\xi_j}zdx\right)\right|\\
&\le& \int_{\mathbf R^n}\left|p(p-1)w^2|z+\theta_5w|^{p-2}\partial_{\xi_j}z\right|dx\\
&\le& C\|\partial_{\xi_j}z\|_{L^{p+1}}\,\|z+\theta_5w\|_{L^{p+1}}^{p-2}\,\|w\|_{L^{p+1}}^2\\
&\le& C\|\partial_{\xi_j}z\|_s\,\|z+\theta_5w\|_{s}^{p-2}\,\|w\|_{s}^2\le C\|w\|_{s}^2.
\end{eqnarray*}
Here $\theta_5\in [0,1]$.
Therefore,
\begin{equation*}
\left|\partial_{\xi_j}\left(\frac{1}{p+1}\int_{\mathbf R^n}\left(|z+w|^{p+1}-z^{p+1}-(p+1)z^pw\right)dx\right)\right|\le C\|w\|_s^{1+\sigma}.
\end{equation*}
Moreover,
\begin{equation*}
\left|\int_{\mathbf R^n}pw|z+\theta_4w|^{p-1}\partial_{\xi_j}wdx\right|\le C\|z+\theta_4w\|_{L^{p+1}}^{p-1}\|w\|_{L^{p+1}}\|\partial_{\xi_j}\|_{L^{p+1}}
\le C\|w\|_{s}\|\partial_{\xi_j}w\|_{s}.
\end{equation*}
Therefore, we have that
\begin{equation*}
|\nabla \Psi_{\varepsilon}(\xi)|\le C \|w\|_s\left( \|w\|_s^{\sigma}+\|\nabla_{\xi}w\|_s\right).
\end{equation*}
This completes the proof.
\end{proof}
\begin{lemma}
It holds
\begin{equation}\label{e:gvz}
|\nabla \Gamma_{\varepsilon}(\xi)|\le C\varepsilon^{1+\sigma}.
\end{equation}
\end{lemma}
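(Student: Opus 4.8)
The plan is to write $\Gamma_\varepsilon=\Gamma_1+\Gamma_2$ with
$\Gamma_1(\xi)=\tfrac12\int_{\mathbf R^n}[V(\varepsilon x)-V(\varepsilon\xi)]z^2\,dx$ and
$\Gamma_2(\xi)=\int_{\mathbf R^n}[V(\varepsilon x)-V(\varepsilon\xi)]zw\,dx$, to differentiate each in $\xi$, and to estimate the resulting integrals by the Cauchy--Schwarz inequality together with the bounds already at our disposal. Differentiating $\Gamma_2$ produces three terms. The first, $-\varepsilon(\partial_{x_j}V)(\varepsilon\xi)\int_{\mathbf R^n}zw\,dx$, is bounded by $C\varepsilon\|z\|_0\|w\|_0\le C\varepsilon\|w\|_s\le C\varepsilon^2$ by Remark \ref{r:ws}. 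The second, $\int_{\mathbf R^n}[V(\varepsilon x)-V(\varepsilon\xi)](\partial_{\xi_j}z)w\,dx$, is handled by Cauchy--Schwarz, the boundedness of $V$, Lemma \ref{l:pxe} (writing $\partial_{\xi_j}z=-\partial_{x_j}z$ plus an $H^s$-error of size $O(\varepsilon)$) and Lemma \ref{l:vxz}: it is $\le\big(\int_{\mathbf R^n}|V(\varepsilon x)-V(\varepsilon\xi)|^2|\partial_{\xi_j}z|^2\big)^{1/2}\|w\|_0\le C\varepsilon\cdot\varepsilon=C\varepsilon^2$. The third, $\int_{\mathbf R^n}[V(\varepsilon x)-V(\varepsilon\xi)]z\,\partial_{\xi_j}w\,dx$, is the one that dictates the exponent $1+\sigma$: by Cauchy--Schwarz, Lemma \ref{l:vxz} for $\int|V(\varepsilon x)-V(\varepsilon\xi)|^2z^2$ and Lemma \ref{l:nws} for $\|\nabla_\xi w\|_s$, it is $\le\big(\int_{\mathbf R^n}|V(\varepsilon x)-V(\varepsilon\xi)|^2z^2\big)^{1/2}\|\nabla_\xi w\|_s\le C\varepsilon\cdot\varepsilon^\sigma$. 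Hence $|\nabla\Gamma_2|\le C\varepsilon^{1+\sigma}$.

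The delicate term is $\Gamma_1$, because differentiating it naively leaves $-\tfrac\varepsilon2(\partial_{x_j}V)(\varepsilon\xi)\int z^2\,dx$, which is only $O(\varepsilon)$ since $\int z^2\,dx$ is bounded away from zero; a cancellation is required. I would write $\partial_{\xi_j}\Gamma_1=-\tfrac\varepsilon2(\partial_{x_j}V)(\varepsilon\xi)\int_{\mathbf R^n}z^2\,dx+\int_{\mathbf R^n}[V(\varepsilon x)-V(\varepsilon\xi)]z\,\partial_{\xi_j}z\,dx$, apply Lemma \ref{l:pxe} to split $\partial_{\xi_j}z=-\partial_{x_j}z+r_j$ with $\|r_j\|_s=O(\varepsilon)$, and observe that $\int_{\mathbf R^n}[V(\varepsilon x)-V(\varepsilon\xi)]z\,r_j\,dx$ is $O(\varepsilon^2)$ by Cauchy--Schwarz and Lemma \ref{l:vxz}. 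For the leading piece, use $z\,\partial_{x_j}z=\tfrac12\partial_{x_j}(z^2)$ and integrate by parts in $x_j$ (boundary terms vanish by the decay of $U$ in Theorem \ref{t:unda}); the derivative lands on $V(\varepsilon x)-V(\varepsilon\xi)$ and gives $\tfrac\varepsilon2\int_{\mathbf R^n}(\partial_{x_j}V)(\varepsilon x)z^2\,dx$. Combining, $\partial_{\xi_j}\Gamma_1=\tfrac\varepsilon2\int_{\mathbf R^n}\big[(\partial_{x_j}V)(\varepsilon x)-(\partial_{x_j}V)(\varepsilon\xi)\big]z^2\,dx+O(\varepsilon^2)$; since $|D^2V|$ is bounded the bracket is $\le C\varepsilon|x-\xi|$, and $\int_{\mathbf R^n}|x-\xi|z^2\,dx<\infty$ by the decay of $U$, so $\nabla\Gamma_1=O(\varepsilon^2)$. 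Adding the two bounds and using $\sigma\le1$ yields \eqref{e:gvz}.

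The main obstacle is exactly this cancellation in $\nabla\Gamma_1$: one must recognise that the stray $O(\varepsilon)$ term coming from differentiating $V(\varepsilon\xi)$ and the $O(\varepsilon)$ term produced, after integration by parts, by the translation part of $\partial_{\xi_j}z$ combine into a genuine second-order difference quotient of $V$ and are therefore $O(\varepsilon^2)$. Once this is observed, everything else is a routine application of Cauchy--Schwarz and the previously established estimates on $\|w\|_s$, $\|\nabla_\xi w\|_s$ and on the weighted $L^2$-norms of $z$ and $\partial_x z$ (Lemmas \ref{l:vxz}, \ref{l:nws}, \ref{l:pxe} and Remark \ref{r:ws}); convergence of the various weighted integrals of $U^2$ and $|\nabla U|^2$ is guaranteed by the sharp decay rate in Theorem \ref{t:unda} and the standing hypothesis $n>4-4s$.
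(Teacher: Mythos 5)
Your proposal is correct. For $\Gamma_2$ you proceed as the paper does: differentiate, obtain three terms, and bound each by Cauchy--Schwarz with Lemma \ref{l:vxz}, Remark \ref{r:ws} and Lemma \ref{l:nws}, the third term dictating the exponent $1+\sigma$; you are in fact slightly more careful than the paper, since Lemma \ref{l:vxz} is stated for $\partial_{x_j}z$ rather than $\partial_{\xi_j}z$ and you explicitly route through Lemma \ref{l:pxe} to close that gap, whereas the paper applies it to $\partial_{\xi_j}z$ without comment. For $\Gamma_1$ you take a genuinely different road. The paper Taylor-expands $V(\varepsilon x)-V(\varepsilon\xi)$ inside the integral to second order, observes (silently, using that $U$ is radial so $z^2(\xi+\cdot)$ is even) that the first-order term $\varepsilon\int\nabla V(\varepsilon\xi)\cdot(x-\xi)z^2\,dx$ vanishes after the shift $y=x-\xi$, and then differentiates the remaining explicitly $\varepsilon^2$-sized remainder. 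You instead differentiate first and produce the cancellation by integration by parts: after writing $\partial_{\xi_j}z=-\partial_{x_j}z+O_{H^s}(\varepsilon)$ and using $z\,\partial_{x_j}z=\tfrac12\partial_{x_j}(z^2)$, the two $O(\varepsilon)$ contributions combine into $\tfrac{\varepsilon}{2}\int\bigl[(\partial_{x_j}V)(\varepsilon x)-(\partial_{x_j}V)(\varepsilon\xi)\bigr]z^2\,dx$, a genuine second difference of $V$ and hence $O(\varepsilon^2)$. The two arguments rest on the same underlying cancellation (integration by parts against the constant vector $\nabla V(\varepsilon\xi)$ is exactly the vanishing of $\int y\,z^2(\xi+y)\,dy$), but yours displays it more transparently and does not rely on the evenness of $U$, while the paper's gives a clean closed formula for $\Gamma_1$ itself. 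Your use of the decay of $U$ and the hypothesis $n>4-4s$ to justify the weighted integrals is also accurate.
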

\begin{proof}
Compute
\begin{eqnarray*}
&&\int_{\mathbf R^n}(V(\varepsilon x)-V(\varepsilon\xi))z^2dx\\
&=&\varepsilon\int_{\mathbf R^n}\nabla V(\varepsilon\xi)\cdot(x-\xi)z^2dx\\
&&+\varepsilon^2\int_{\mathbf R^n}D^2V(\varepsilon\xi+\theta_6(\varepsilon-\xi))[x-\xi,x-\xi]z^2dx\\
&=&\varepsilon\int_{\mathbf R^n}\nabla V(\varepsilon\xi)\cdot yz^2(y)dx\\
&&+\varepsilon^2\int_{\mathbf R^n}D^2V(\varepsilon\xi+\theta_6(\varepsilon-\xi))[x-\xi,x-\xi]z^2dx\\
&=&\varepsilon^2\int_{\mathbf R^n}D^2V(\varepsilon\xi+\theta_6(\varepsilon-\xi))[x-\xi,x-\xi]z^2dx
\end{eqnarray*}
where $\theta_6\in[0,1]$.
Since $V\in C^3_b(\mathbf R^n)$, it holds that
\begin{eqnarray}\label{e:vzn1}
&&\left|\partial_{\xi_j}\left(\int_{\mathbf R^n}(V(\varepsilon x)-V(\varepsilon\xi))z^2dx\right)\right|\\
&=&\varepsilon^2\left|\partial_{\xi_j}\left(\int_{\mathbf R^n}D^2V(\varepsilon\xi+\theta_6(\varepsilon-\xi))[x-\xi,x-\xi]z^2dx\right)\right|
\le C\varepsilon^2.\notag
\end{eqnarray}
Estimate
\begin{eqnarray*}
&&\left|\partial_{\xi_j}\int_{\mathbf R^n}[V(\varepsilon x)-V(\varepsilon\xi)]zwdx\right|\\
&\le&\varepsilon|\nabla V(\varepsilon\xi)|\int_{\mathbf R^n}|zw|dx+\int_{\mathbf R^n}|V(\varepsilon x)-V(\varepsilon\xi)||\partial_{\xi_j}z||w|dx\\
&&+\int_{\mathbf R^n}|V(\varepsilon x)-V(\varepsilon\xi)||z||\partial_{\xi_j}w|dx\\
&\le&\varepsilon|\nabla V(\varepsilon\xi)|\|w\|_0+\left(\int_{\mathbf R^n}|V(\varepsilon x)-V(\varepsilon\xi)|^2|\partial_{\xi_j}z|^2dx\right)^{\frac{1}{2}}\|w\|_0\\
&&+\left(\int_{\mathbf R^n}|V(\varepsilon x)-V(\varepsilon\xi)|^2|z|^2dx\right)^{\frac{1}{2}}\|\partial_{\xi_j}w\|_0.
\end{eqnarray*}
Thus by Lemma \ref{l:vxz}, Remark \ref{r:ws} and Lemma \ref{l:nws}, we have that
\begin{equation}\label{e:vzn2}
\left|\nabla\left(\int_{\mathbf R^n}(V(\varepsilon x)-V(\varepsilon\xi))zwdx\right)\right|\le C\varepsilon(\varepsilon+\|w\|_s+\|\nabla w\|_s)\le C \varepsilon^{1+\sigma}.
\end{equation}
Therefore, from Estimates (\ref{e:vzn1}) (\ref{e:vzn2}), Equation (\ref{e:gvz}) holds.
\end{proof}

Let $\alpha(\varepsilon,\xi)=\theta C_1(1+V(\varepsilon\xi))^{\theta-1}$, where $\theta=\frac{p+1}{p-1}-\frac{n}{2s}$. Then summarizing all conclusion above, we get the following proposition.
\begin{prop}\label{p:prv}
It holds
\begin{equation*}
\nabla\Phi_{\varepsilon}(\xi)=\alpha(\varepsilon\xi)\varepsilon \nabla V(\varepsilon\xi)+\varepsilon^{1+\sigma}\varpi_{\varepsilon}(\xi),
\end{equation*}
where $\varpi_{\varepsilon}(\xi)$ is a bounded function and $\sigma=\min\{1,p-1\}$.
\end{prop}
\begin{remark}\label{r:prv}
Using similar argument, we can prove that
\begin{equation*}
\Phi_{\varepsilon}(\xi)=C(1+V(\varepsilon\xi))^{\theta}+\gamma_{\varepsilon}(\xi),
\end{equation*}
where $C>0$, $\theta=\frac{p+1}{p-1}-\frac{n}{2s}$ and $|\gamma_{\varepsilon}(\xi)|\le C(\varepsilon |\nabla V(\varepsilon\xi)|+\varepsilon^2)$.
\end{remark}


\section{Proof of the main theorem}\label{s:proof}
In this section, we shall prove the main theorem by a classical perturbation result.
\subsection{A multiplicity result by perturbation}\label{ss:abstract}
Let $M\subset\mathbf R^n$ be a non-empty set. We denote by $M_{\delta}$ its $\delta$-neighbourhood.
The cup length $l(M)$ of $M$ is defined by
\begin{equation*}
l(M)=1+\sup\{k\in \mathbf N\,\mid\, \exists \alpha_1,\cdots,\alpha_k\in \check{H}^*(M)\setminus 1, \alpha_1\cup\cdots\cup\alpha_k\ne 0\}.
\end{equation*}
If no such class exists, we set $l(M)=1$. Here $\check{H}^*(M)$ is the Alexander cohomology of $M$ with real coefficients and $\cup$ denotes the cup product.

Assume that $V$ has a smooth manifold of critical points of $M$. According to Bott \cite{Bo:AM57}, we say that $M$ is non-degenerate critical manifold for $V$ if every $x\in M$ is a critical point of $V$ and the nullity of all $x\in M$ equals to the dimension of $M$. 

Now we recall a classical perturbation result. For more details, see Theorem 6.4 of Chapter II in \cite{Ch:IDMT}.
\begin{theorem}\label{t:abstract}
Let $h\in C^2(\mathbf R^n)$ and $\Sigma\subset \mathbf R^n$ be a smooth compact non-degenerate critical manifold of $h$. Let $W$ be a neighbourhood of $\Sigma$ and let $g\in C^1(\mathbf R^n)$. Then, if $\|h-g\|_{C^1(\overline W)}$ is sufficiently small, the function $g$ has at least $l(\Sigma)$ critical points in $W$.
\end{theorem}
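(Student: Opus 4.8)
The plan is to reduce, near $\Sigma$, the problem of finding critical points of $g$ to a finite-dimensional one posed on $\Sigma$ itself, and then to invoke Lusternik--Schnirelmann theory; this is the abstract analogue of the Lyapunov--Schmidt reduction carried out in Sections~\ref{s:reduction} and~\ref{s:proof}.

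First I would fix a tubular neighbourhood of the compact submanifold $\Sigma\subset\mathbf R^n$: let $N\Sigma$ be its normal bundle and $\Psi\colon(N\Sigma)_\rho\to\mathcal U_\rho\subset W$ a diffeomorphism of a disc bundle of small radius $\rho$ onto a neighbourhood of $\Sigma$, so that points of $\mathcal U_\rho$ are written uniquely as $\Psi(\xi,\eta)$ with $\xi\in\Sigma$, $\eta\in N_\xi\Sigma$, $|\eta|<\rho$; set $\tilde g=g\circ\Psi$, $\tilde h=h\circ\Psi$, so that critical points of $g$ in $\mathcal U_\rho$ correspond to those of $\tilde g$. Next I would use the Bott non-degeneracy: since $\ker D^2h(\xi)=T_\xi\Sigma$ for every $\xi\in\Sigma$, the fibre Hessian $A(\xi):=D^2_\eta\tilde h(\xi,0)\colon N_\xi\Sigma\to N_\xi\Sigma$ is symmetric and invertible, with inverse bounded uniformly in $\xi$ by compactness of $\Sigma$; after shrinking $\rho$, $D^2_\eta\tilde h(\xi,\eta)$ stays uniformly invertible on the whole disc bundle. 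For each $\xi$ I would solve the fibre equation $\nabla_\eta\tilde g(\xi,\eta)=0$ for $\eta=\eta(\xi)$ in $\{|\eta|<\rho\}$; the uniform invertibility of $D^2_\eta\tilde h$ near the zero section together with the $C^1$-smallness of $g-h$ gives a locally unique solution with $\sup_\xi|\eta(\xi)|\to0$ as $\|h-g\|_{C^1(\overline W)}\to0$, continuous in $\xi$. Putting $\phi(\xi):=\tilde g(\xi,\eta(\xi))$, the chain rule and the fibre equation give $\nabla\phi(\xi_0)=0\iff\nabla\tilde g(\xi_0,\eta(\xi_0))=0$, so critical points of $\phi$ on $\Sigma$ produce, via $\Psi$, critical points of $g$ in $W$. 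Since $\phi$ is a $C^1$ function on the compact boundaryless manifold $\Sigma$, it has at least $\operatorname{cat}(\Sigma)$ critical points, and $\operatorname{cat}(\Sigma)\ge l(\Sigma)$ by the cup-length estimate (with the normalisation of $l$ used in Section~\ref{ss:abstract}), which yields the claimed $l(\Sigma)$ critical points of $g$ in $W$.

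The delicate point is the reduction step: all the non-degeneracy available is that of $h$, which is only $C^2$, whereas the function whose critical points we actually want, $g$, is merely $C^1$, so the ordinary implicit function theorem does not literally apply to the fibre equation and one must check that $\eta(\cdot)$, hence $\phi$, retains enough regularity for the Lusternik--Schnirelmann minimax. I would handle this by a quantitative contraction in which the invertible linear part is supplied by $D^2h$ rather than by $D^2g$, or, should that regularity bookkeeping prove awkward, by bypassing the reduction and running the minimax directly for $g$: the non-degeneracy of $\Sigma$ forces $|\nabla h|$ to be bounded below on each spherical shell $\{\rho_1\le\operatorname{dist}(\cdot,\Sigma)\le\rho\}$, hence so is $|\nabla g|$ once $\|h-g\|_{C^1(\overline W)}$ is small, which confines a pseudo-gradient deformation of $g$ to $\mathcal U_\rho$ and lets the $l(\Sigma)$ cup-length classes of $\Sigma$ define $l(\Sigma)$ critical values of $g$ inside $W$. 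Either way, once the reduced $C^1$ functional on $\Sigma$ (or the confined deformation) is available, the conclusion is the classical statement recalled as Theorem~6.4 of Chapter~II in \cite{Ch:IDMT}.
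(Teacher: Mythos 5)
The paper does not prove this theorem: it is explicitly ``recalled'' and attributed to Theorem~6.4 of Chapter~II in \cite{Ch:IDMT}, so there is no in-paper argument to compare against. Assessing your proposal on its own terms, the reduction scheme has a genuine gap, which you flagged but did not resolve. The iteration $T_\xi(\eta)=\eta-A(\xi)^{-1}\nabla_\eta\tilde g(\xi,\eta)$ with $A(\xi)=D^2_\eta\tilde h(\xi,0)$ is \emph{not} a contraction under the stated hypotheses. Decomposing $\nabla_\eta\tilde g=\nabla_\eta\tilde h+\nabla_\eta(\tilde g-\tilde h)$, the $\tilde h$-part contributes a Lipschitz factor $o(1)$ as $\rho\to0$ (uniform continuity of $D^2\tilde h$ on the compact disc bundle), but the increment $A(\xi)^{-1}\bigl[\nabla_\eta(\tilde g-\tilde h)(\xi,\eta_1)-\nabla_\eta(\tilde g-\tilde h)(\xi,\eta_2)\bigr]$ cannot be bounded by $\|g-h\|_{C^1(\overline W)}|\eta_1-\eta_2|$: the $C^1$-norm of $g-h$ controls $|\nabla(g-h)|$ pointwise but gives no modulus of continuity on $\nabla(g-h)$, and $D^2(g-h)$ need not exist since $g$ is merely $C^1$. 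So a continuous, a fortiori $C^1$, branch $\eta(\xi)$ and hence the reduced functional $\phi$ cannot be produced this way; a Brouwer-degree argument yields only existence of \emph{some} fibre zero, not a branch, and therefore no function $\phi$ on $\Sigma$ to which Lusternik--Schnirelmann theory applies.

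Your second route---a confined Lusternik--Schnirelmann minimax run directly for $g$---is the viable one, but as written the confinement step is too quick: lower-boundedness of $|\nabla g|$ on an annular shell $\{\rho_1\le\operatorname{dist}(\cdot,\Sigma)\le\rho\}$ does not by itself keep the negative pseudo-gradient flow inside the tube, since the flow may exit the tube while $g$ decreases, especially when the normal Hessian of $h$ along $\Sigma$ is indefinite. One must instead argue with sublevel sets around the critical value $c_0=h|_\Sigma$ (constant on each component of $\Sigma$) and, in the indefinite case, modify the functional or use a Gromoll--Meyer type pair so that the admissible deformations genuinely stay inside $W$; compactness of $\Sigma$ and the quadratic growth of $h-c_0$ transverse to $\Sigma$ are what make this work. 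The final step is fine: with the paper's normalisation $l=1+\text{cup-length}$, the classical estimate $\operatorname{cat}(\Sigma)\ge l(\Sigma)$ gives the stated count once the confinement is justified.
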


\subsection{Proof of Theorem \ref{t:main}}
With the preliminary considerations of the sections above, we now prove Theorem \ref{t:main} by the abstract perturbation theorem above.

\begin{proof}[Proof of Theorem \ref{t:main}]
Fix $\bar\rho>0$ such that $M\subset B_{\bar \rho}$. Since $M$ is a non-degenerate smooth critical manifold of $V$, it is a non-degenerate critical manifold of $C_1(1+V)^{\theta}$ as well. To use Theorem \ref{t:abstract}, we define
\begin{equation*}
h(\xi)=C_1(1+V(\xi))^{\theta},
\end{equation*}
and
\begin{equation*}
 g(\xi)=\Phi_{\varepsilon}\left(\frac{\xi}{\varepsilon}\right).
\end{equation*}
Set $\Sigma=M$. Fix a $\delta$-neighbourhood $M_{\delta}$ of $M$ such that $M_{\delta}\subset B_{\bar\rho}$ and the only critical points of $V$ in $M_{\delta}$ are those of $M$. Let $W=M_{\delta}$. From Proposition \ref{p:prv} and Remark \ref{r:prv}, the function $\Phi_{\varepsilon}(\cdot/\varepsilon)$ is converges to $h(\cdot)$ in $C^1(\overline W)$ as $\varepsilon\to 0$. Then Theorem \ref{t:abstract} yields the existence of at least $l(M)$ critical points of $g$ for $\varepsilon$ sufficiently small.

Let $\xi_k\in M_{\delta}$ be any of those critical points. Then $\xi_k/\varepsilon$ is a critical point of $\Phi_{\varepsilon}$ and Proposition \ref{p:dgw} implies that
\begin{equation*}
u_{\varepsilon,\xi_k}(x)=z_{\xi_k}\left(x-\frac{\xi_k}{\varepsilon}\right)+w(\varepsilon,\xi_k)
\end{equation*}
is a critical point of $f_{\varepsilon}$ and hence a solution of Equation (\ref{e:change}). Thus
\begin{equation*}
u_{\varepsilon,\xi_k}\left(\frac{x}{\varepsilon}\right)\simeq z_{\xi_k}\left(\frac{x-\xi_k}{\varepsilon}\right)
\end{equation*}
is a solution of Equation (\ref{e:main-equation}).

 Any $\xi_k$ converges to some point $\xi_k^*\in M_{\delta}$ as $\varepsilon\to 0$. By Proposition \ref{p:prv}, we have that $\xi_k^*$ is a stationary point of $V$. Then the choice of $M_{\delta}$ implies that $\xi_k^*\in M$. That is, $u_{\varepsilon,\xi_k}(x/\varepsilon)$ concentrates near a point of $M$. This completes the proof.
\end{proof}

\section*{Acknowledgments}
We are grateful to the anonymous referees for useful comments and suggestions. This work was supported by National Natural Science Foundation of China (No. 11401521) and Zhejiang Provincial Natural Science Foundation of China (LQ13A010003).


\newcommand{\Toappear}{to appear in}

\bibliography{mrabbrev,cz_abbr2003-0,localbib}

\begin{thebibliography}{10}

\bibitem{Ad:SS}
Robert~A. Adams.
\newblock {\em Sobolev spaces}.
\newblock Pure and Applied Mathematics, Vol. 65. Academic Press [A subsidiary
  of Harcourt Brace Jovanovich, Publishers], New York-London, 1975.

\bibitem{AB:VPM}
A.~Ambrosetti and S.~Badiale.
\newblock Variational perturbative methods and bifurcation of bounds states
  from the essential spectrum.
\newblock {\em Proc. Royal Soc. Edinburgh}, 128(A):1131--1161, 1998.

\bibitem{ABC:ARMA97}
A.~Ambrosetti, S.~Badiale, and S.~Cingolani.
\newblock Semiclassical states of nonlinear {S}chr\"odinger equations.
\newblock {\em Arch. Rational Mech. Anal.}, 140(3):285--300, 1997.

\bibitem{AMS:MRNSE}
A.~Ambrosetti, A.~Malchiodi, and S.~Secchi.
\newblock Multiplicity results for some nonlinear {S}chr\"odinger equations
  with potentials.
\newblock {\em Arch. Ration. Mech. Anal.}, 159(3):253--271, 2001.

\bibitem{AMN:SPEE}
Antonio Ambrosetti, Andrea Malchiodi, and Wei-Ming Ni.
\newblock Singularly perturbed elliptic equations with symmetry: existence of
  solutions concentrating on spheres. {I}.
\newblock {\em Comm. Math. Phys.}, 235(3):427--466, 2003.

\bibitem{BL:MPEPS}
Abbas Bahri and Yan~Yan Li.
\newblock On a min-max procedure for the existence of a positive solution for
  certain scalar field equations in {${\bf R}^N$}.
\newblock {\em Rev. Mat. Iberoamericana}, 6(1-2):1--15, 1990.

\bibitem{BL:EPSSE}
Abbas Bahri and Pierre-Louis Lions.
\newblock On the existence of a positive solution of semilinear elliptic
  equations in unbounded domains.
\newblock {\em Ann. Inst. H. Poincar\'e Anal. Non Lin\'eaire}, 14(3):365--413,
  1997.

\bibitem{Bo:AM57}
R.~Bott.
\newblock Nondegenerate critical manifolds.
\newblock {\em Annals of Math.}, 60:248--267, 1957.

\bibitem{BW:SWCF}
Jaeyoung Byeon and Zhi-Qiang Wang.
\newblock Standing waves with a critical frequency for nonlinear
  {S}chr\"odinger equations.
\newblock {\em Arch. Ration. Mech. Anal.}, 165(4):295--316, 2002.

\bibitem{Caffarelli&Silvestre07}
Luis Caffarelli and Luis Silvestre.
\newblock An extension problem related to the fractional {L}aplacian.
\newblock {\em Comm. Partial Differential Equations}, 32(7-9):1245--1260, 2007.

\bibitem{Ch:IDMT}
Kung-Ching Chang.
\newblock {\em Infinite-dimensional {M}orse theory and multiple solution
  problems}.
\newblock Progress in Nonlinear Differential Equations and their Applications,
  6. Birkh\"auser Boston Inc., Boston, MA, 1993.

\bibitem{CZ:CPFSE}
Guoyuan Chen and Youquan Zheng.
\newblock Concentration phenomenon for fractional nonlinear {S}chr\"odinger
  equations.
\newblock {\em Comm. Pure Appl. Anal.}, 13(6):2359--2376, 2014.

\bibitem{CL:TMNA97}
S.~Cingolani and M.~Lazzo.
\newblock Multiple semiclassical standing waves for a class of nonlinear
  {S}chr\"odinger equations.
\newblock {\em Topol. Methods Nonlinear Anal.}, 10(3-4):1--13, 1997.

\bibitem{DaDPDiVa14}
Juan D{\'a}vila, Manuel del Pino, Serena Dipierro, and Enrico Valdinoci.
\newblock Concentration phenomena for the nonlocal {S}chr\"odinger equation
  with {D}irichlet datum.
\newblock arXiv:1403.4435.

\bibitem{DDW:CSWFSE}
Juan D{\'a}vila, Manuel del Pino, and Juncheng Wei.
\newblock Concentrating standing waves for the fractional nonlinear
  {S}chr\"odinger equation.
\newblock {\em J. Differential Equations}, 256(2):858--892, 2014.

\bibitem{DF:LMP}
Manuel del Pino and Patricio~L. Felmer.
\newblock Local mountain passes for semilinear elliptic problems in unbounded
  domains.
\newblock {\em Calc. Var. Partial Differential Equations}, 4(2):121--137, 1996.

\bibitem{DF:MA02}
Manuel del Pino and Patricio~L. Felmer.
\newblock Semiclassical states of nonlinear {S}chr\"odinger equations: a
  variational reduction method.
\newblock {\em Math. Ann.}, 324:1--32, 2002.

\bibitem{DW:CCNSE}
Manuel del Pino, Michal Kowalczyk, and Jun-Cheng Wei.
\newblock Concentration on curves for nonlinear {S}chr\"odinger equations.
\newblock {\em Comm. Pure Appl. Math.}, 60(1):113--146, 2007.

\bibitem{NPV:HG}
Eleonora Di~Nezza, Giampiero Palatucci, and Enrico Valdinoci.
\newblock Hitchhiker's guide to the fractional {S}obolev spaces.
\newblock {\em Bull. Sci. Math.}, 136(5):521--573, 2012.

\bibitem{FaMaVa14:GSCP}
M.~Fall, F.~Mahmoudi, and E.~Valdinoci.
\newblock Ground states and concentration phenomena for the fractional
  {S}chr\"odinger equation.
\newblock arXiv:1411.0576.

\bibitem{FaVa13:UNPS}
M.~Fall and E.~Valdinoci.
\newblock Uniqueness and nondegeneracy of positive solutions of
  $(-{\Delta})^su+u=u^p$ in $\mathbb{R}^n$ when $s$ is close to 1.
\newblock arXiv:1301.4868.

\bibitem{FQT:PSNFS}
P.~Felmer, A.~Quaas, and J.~Tan.
\newblock Positive solutions of the nonlinear {S}chr\"odinger equation with the
  fractional {L}aplacian.
\newblock {\em Proc. Roy. Soc. Edinburgh Sect. A}, 142(6):1237--1262, 2012.

\bibitem{FW:NWPCS}
Andreas Floer and Alan Weinstein.
\newblock Nonspreading wave packets for the cubic {S}chr\"odinger equation with
  a bounded potential.
\newblock {\em J. Funct. Anal.}, 69(3):397--408, 1986.

\bibitem{FS:URSFL}
Rupert Frank, Enno Lenzmann, and Luis Silvestre.
\newblock Uniqueness of radial solutions for the fractional {L}aplacian.
\newblock arXiv:1302.2652.

\bibitem{FL:UNGS}
Rupert~L. Frank and Enno Lenzmann.
\newblock Uniqueness of non-linear ground states for fractional {L}aplacians in
  {$\Bbb{R}$}.
\newblock {\em Acta Math.}, 210(2):261--318, 2013.

\bibitem{G:NSPS}
Massimo Grossi.
\newblock On the number of single-peak solutions of the nonlinear
  {S}chr\"odinger equation.
\newblock {\em Ann. Inst. H. Poincar\'e Anal. Non Lin\'eaire}, 19(3):261--280,
  2002.

\bibitem{G:CPDE96}
Changfeng Gui.
\newblock Existence of multi-bumps solutions for nonlinear {S}chr\"odinger
  equations via variational method, 1996.

\bibitem{KW:ADE00}
X.~Kang and J.~Wei.
\newblock On interacting bumps of semiclassical states of nonlinear
  {S}chr\"odinger equations.
\newblock {\em Adv. Differential Equations}, 5(7-9):899--928, 2000.

\bibitem{La:FSE}
Nick Laskin.
\newblock Fractional {S}chr\"odinger equation.
\newblock {\em Phys. Rev. E (3)}, 66(5):056108, 7, 2002.

\bibitem{Li:SPEE}
YanYan Li.
\newblock On a singularly perturbed elliptic equation.
\newblock {\em Adv. Differential Equations}, 2(6):955--980, 1997.

\bibitem{MMM:SSE}
F.~Mahmoudi, A.~Malchiodi, and M.~Montenegro.
\newblock Solutions to the nonlinear {S}chr\"odinger equations carrying
  momentum along a curve.
\newblock {\em Comm. Pure Appl. Math.}, 62(9):1155--1264, 2009.

\bibitem{Oh:CMP89}
Y.-G. Oh.
\newblock Stability of semiclassical bound states of nonlinear {S}chr\"odinger
  equations with potentials.
\newblock {\em Comm. Math. Phys.}, 121(1):11--33, 1989.

\bibitem{Oh:CMP90}
Y.-G. Oh.
\newblock On positive multi-lump bound states of nonlinear {S}chr\"odinger
  equations under multiple well potential.
\newblock {\em Comm. Math. Phys.}, 131(2):223--253, 1990.

\bibitem{R:ZAMP92}
P.~Rabinowitz.
\newblock On a class of nonlinear schr\"odinger equations.
\newblock {\em Z. Angew. Math. Phys.}, 43:270--291, 1992.

\bibitem{Sh:POST}
M.~A. Shubin.
\newblock {\em Pseudodifferential operators and spectral theory}.
\newblock Springer-Verlag, Berlin, second edition, 2001.
\newblock Translated from the 1978 Russian original by Stig I. Andersson.

\bibitem{W:CMP93}
X.~F. Wang.
\newblock On concentration of positive bound states of nonlinear
  {S}chr\"odinger equations.
\newblock {\em Comm. Math. Phys.}, 153(2):229--244, 1993.

\end{thebibliography}

\bibliographystyle{plain}
\end{document}